\theoremstyle{plain}
\newtheorem{proposition}{Proposition}
\newtheorem{lemma}[proposition]{Lemma}
\newtheorem{theorem}[proposition]{Theorem}
\theoremstyle{definition}
\newtheorem{remark}[proposition]{Remark}
\DeclareMathOperator*{\esssup}{ess\,sup}
\theoremstyle{definition}
\theoremstyle{plain}
\numberwithin{equation}{section}
\numberwithin{proposition}{section}
\numberwithin{conj}{section}	
\begin{document}

\title{Mean-Field Control for Diffusion Aggregation system with Coulomb Interaction}
\author{
\sl{Li Chen}\\
   \small\emph{School of Business Informatics and Mathematics, University of Mannheim,} \\
   \small\emph{Mannheim, 68131, Germany}\\
\sl{Yucheng Wang}\\
  \small\emph{School of Business Informatics and Mathematics, University of Mannheim,} \\
   \small\emph{Mannheim, 68131, Germany}\\
\sl{Zhao Wang}\\
   \small\emph{School of Mathematics and Statistics, Changchun University of Technology, }\\
   \small\emph{Changchun, 130012, China}
    }
\date{}
\footnotetext[1]{MSC: 35Q93; 60F17; 82C22; 60B10.}
\footnotetext[2]{Keywords: Mean-field control, optimal control of PDE, relative entropy method, mean-field limit, convergence in probability.}
\footnotetext[4]{E-mail addresses: chen@math.uni-mannheim.de, yucheng.wang@uni-mannheim.de, wangzhao@ccut.edu.cn.}

\maketitle

\begin{abstract}
	The mean-field control problem for a multi-dimensional diffusion-aggregation system with Coulomb interaction (the so called parabolic elliptic Keller-Segel system) is considered. The existence of optimal control is proved through the $\Gamma$-convergence of the corresponding control problem of the interacting particle system. There are three building blocks in the whole argument. Firstly, for the optimal control problem on the particle level, instead of using classical method for stochastic system, we study directly the control problem of high-dimensional parabolic equation, i.e. the Liouville equation of it. Secondly, we obtain a strong propagation of chaos result for the interacting particle system by combining the convergence in probability and relative entropy method. Due to this strong mean field limit result, we avoid giving compact support requirement for control functions, which has been often used in the literature. Thirdly, because of strong aggregation effect, additional difficulties arise from control function in obtaining the well-posedness theory of the diffusion-aggregation equation, so that the known method  cannot be directly applied. Instead, we use a combination of local existence result and bootstrap argument to obtain the global solution in the sub-critical regime.
\end{abstract}

\section{Introduction}

Research on the optimal control problem for systems of evolutionary interacting agents and their corresponding mean-field limits attracted a lot of attentions from applied mathematician recently. A number of applications can be found in physics, biology, ecology, economy, and social sciences. Since the number of interacting agents is in general very large, it is hard to follow all the trajectories both numerically and analytically. Therefore, control problem of their mean-field limit equation can be used as an approximation of the control problem for the interacting particle system with mean-field structure. The motivation of studying the control problem for mean-field particle system is the following: by adding the control function and carefully choosing cost functional to the interacting particle system, the group behavior of the agents can be oriented to the designed performance with minimal cost. There are significant progresses in rigorously proving this limit in the last decade, whereby exact references will be given in the next paragraphs.

Since the newly developed mean-field control theory strongly depends on the propagation of chaos result of interacting particle system, we first shortly review the state of art on the mean-field limit result without control. The results for interacting particle systems with smooth interactions have been obtained by applying classical methods based on the direct trajectory estimates, for example in \cite{L31, snitzman_propagation_of_chaos}. For singular interacting forces, which happen in most of the applications, one can find a large number of literature. It is out of the scope of this paper to review all the important results, instead, we focus only on the results which are directly related to the approach we use here. In \cite{L45} Oelschl\"ager successfully applied  moderate interacting particle systems to achieve local non-linear partial differential equations. Stevens investigated the moderate interaction system and its limit to Keller-Segel system \cite{L53}.  Recently, this moderate interacting approach  has been widely used in studying different systems in \cite{L49, L26, L13, CDJ2019, LCZ}. For Coulomb interaction,  Lazarovici and Pickl \cite{lazarovici2017mean} deduced propagation of chaos by proving the convergence in probability of the trajectories.
The relative entropy method has been introduced in \cite{JW2016,JabinWang2018} by Jabin and Wang to handle non-Lipschitz force. Serfaty considered in \cite{Serfaty_2020} the modulated energy approach to derive mean-field limits for repulsive Coulomb interaction potential. Bresch, Jabin, and Wang combined, in \cite  {BreschDidierJabinWangZhenfu2019}, the tools from \cite{JabinWang2018} and the modulated potential energy method from \cite{Serfaty_2020}, and obtained mean-field limits for two dimentional Keller-Segel systems or logarithm potentials in higher dimensions. Recently, Chen, Holzinger, and J\"ungel in \cite{CHJ24} obtained the quantitative estimate for smoothed-$L^2$ norm for moderate interacting system with attractive sub-Coulomb force, which implied not only the propagation of chaos result, but also the fluctuation result in the central limit theorem meaning. In \cite {ChenHolzingerHuo2023}, Chen, Holzinger, and Huo established a connection between the relative entropy and smoothed-$L^2$ estimate for Coulomb and porous medium case. As a result, one can obtain the propagation of chaos result in the strong $L^1$ sense. This method is going to be applied in the mean-field control problem in this paper.

Optimal control problems are naturally considered within the mean-field framework because
of the incompleteness of the self-organization, such as the pattern formation is not being guaranteed.
An effective way is to intervene some individuals towards the pattern formation.
Mean-field optimal control considers the mean-field approximation of the optimal control of
the N-particle system and shows that the optimal control of the N-particle system converging to the
optimal control of the mean-field approximation when the number of the particles goes to infinity. 
The research on obtaining rigorous analytical results of mean-field control problem under different settings has been started only recently.
For deterministic models, Fornasier and Solombrino in \cite{1} introduced the concept of the Mean-field optimal control and got the consistency result by the $\Gamma$-convergence techniques, \cite{2} extended the result to 1st order problems. And \cite{3} obtained some equivalent results among the various representations of mean-field control problems, \cite{5} considered the control problem of mean field PDE on torus and \cite{AM} got the optimal control result with aggregation and distance constraints. \cite{BPTT 2021} provides a framework to compute optimal controls and proves its convergent rate to its mean-field limit control. For the stochastic models, most of researches are focused on McKean-Vlasov optimal control problems, Lacker in \cite{lacker} proved the rigorous consistency with the limit of optimal controls of N-particle systems. \cite{maotan1,maotan2} made extensions to the problems with common noise and \cite{mao} deal with the situation that the state dynamics depend on the joint distribution. \cite{3} got the mean-field optimal control of the nonlocal transport equation with the stochastic term. Further researches on mean-field optimal control can be found in \cite{7,6,YD,BH 2022,BF 2021,8,BCCP,BCC,CDJM,9}.

Many of these interacting systems contain the aggregation effect through attractive interacting pair force among the particles. Adding control to these systems will help in slowing down or speeding up the aggregation effect with minimal costs. 
Most of the known results for mean-field control problems have been focused on non-singular interaction potentials. With singular attractive interactions, i.e. aggregation appears strongly, the interacting particle system has aggregation effect. This implies that  on the macroscopic level the probability density function may blow up in finite time, which is well-known for Keller-Segel system, for example in \cite{DP2004}.

This paper is devoted to the study of the mean-field optimal control of the first order N-particle system with singular interaction force. The mean-field limit of this particle model is the Keller-Segel system, a fundamental system in chemotaxis modeling. The singular potential is given by the fundamental solution of the  ($d\geq 2 $) dimensional Poisson equation. There are actually limited works on optimal control of the Keller-Segel system in the literature on the macroscopic level \cite{10, 11}, which belong to topics on the PDE control problem. These works have been done on bounded domains. In \cite{WWC}, the authors have studied the  mean-field optimal control problem for its two dimensional version, which is a first try for mean-field control problem with singular interaction. Further control problems of chemotaxis systems have been considered in  \cite{a, b}. Except the authors' work \cite{WWC} (written in Chinese for two dimensional case), to our best knowledge, there are no published works on the mean-field optimal control problem with the singular potential yet.

Different from the two dimensional case, where the global existence or blow-up of solution to Keller-Segel system is determined by the initial mass, see \cite{BDP2006, DP2004}, for dimension $d\geq 3$ one can obtain only global existence of solution with small initial data \cite{CPZ04,Perthamebook}. In order to overcome the singularity from interaction effect, the result obtained in this paper focus on the sub-critical  regime of PDE solution.

The choice of control function spaces is motivated by the known works for mean-field control problems in \cite{6,2,1}. The control function space is given by
\begin{equation*}
	{\mathbb{X}}=\Big\{f: \|f\|_{W^{1, q}(\mathbb{R}^d)\cap L^1{(\mathbb{R}^d)}}\leq l(t),\ l(t)\in L^r(0, T), r> 1+\frac{d}{2}, q>d\Big\}. 
\end{equation*}
Worth to mention is that, comparing to the given literature above, we do not need to assume that the control functions have common compact support. Actually we will develop propagation of chaos result in the strong sense, this allows us to use only weak compactness of the control function space.

We denote $\Phi(|x|)=\frac{C_d}{|x|^{d-2}}$ to be the fundamental solution of the Poisson equation.
 
In the following subsections we propose the setting of the microscopic control problem and the macroscopic control problem separately and describe the main result of this paper.
 
\subsection{Setting of the particle control problem}
In the microscopic level, due to the singularity of the interaction potential $\Phi(|x|)$, we start with a particle system with its regularized version $\widetilde{\Phi}_\varepsilon$ as pair interaction potentials, namely
\begin{equation}\label{Phis}
	\widetilde{\Phi}=
	\begin{cases}
		-C_{d}\frac{1}{|x|^{d-2}}, & |x|\geq 2\varepsilon\\
		-C_{d}(2-d)(2\varepsilon)^{1-d}|x|+C_{d}(1-d)(2\varepsilon)^{2-d}, & |x|<2\varepsilon
	\end{cases},  \qquad \widetilde{\Phi}_\varepsilon=j_\varepsilon\ast \widetilde{\Phi},
\end{equation}
where $\varepsilon$ should be chosen depending on $N$ such that $\varepsilon\rightarrow 0$ as $N\rightarrow\infty$.
Other versions of regularized potential for particle systems are also given in \cite{HLP2020,lazarovici2017mean}. In this paper we choose the above regularization to simplify the discussion on the mean-field limit part, and put more efforts to structure the framework of control problem with singular potential. Actually, we do not expect much extra technical difficulties if better cut-off rates, such as in \cite{HLP2020} with extensive estimates, are applied for the control problem discussed in this paper.

For any given control function $f\in{\mathbb{X}}$, we consider the following interacting particle system
\begin{equation}\label{100}
	\begin{cases}
		 \mathrm{d}X_{i}^{N, \varepsilon}=\frac{\chi}{N}\sum_{j=1}^{N}\nabla\widetilde{\Phi}_{\varepsilon}
		(X_{i}^{N, \varepsilon}-X_{j}^{N, \varepsilon})\, \mathrm{d}t-\chi\nabla\widetilde{\Phi}_{\varepsilon}\ast f(X_{i}^{N, \varepsilon})\, \mathrm{d}t+\sqrt{2}\, \mathrm{d}W_{t}^{i},\\
		X_{i}^{N,\varepsilon}(0)=\xi_{i}.
	\end{cases}
\end{equation}
Here we use the standard probability setting for SDEs, i.e.  $(W_t^i)_{i=1}^N$ are independent Brownian motions on $\mathbb{R}^d$ and the initial data $(\xi_i)_{i=1}^N$ are assumed to be i.i.d random variables with probability density $\rho_0$. $\chi$ is a given constant which describes the strength of chemotaxis.  According to the classical SDE theory, there exists a unique square integrable solution $X^{N,\varepsilon}_i[f]$. We denote in this paper the empirical measure $\mu_N[f]=\frac{1}{N}\sum_{i=1}^N\delta_{X^{N,\varepsilon}_i[f]}$, where $\delta_{X^{N,\varepsilon}_i[f]}$ is the Dirac Delta distribution concentrated at ${X^{N,\varepsilon}_i[f]}$.

For any given distribution with density function $z\in L^1(\mathbb{R}^d)\cap L^p(\mathbb{R}^d)$ with $p\in [2,\infty)$, we consider the following cost functional
\begin{equation}\label{27}
	J_{N}(\mu_N[f],f)=\int^T_0\mathbb{E}\big(\big\|j_{\varepsilon}\ast\mu_N[f]-z\big\|_{L^p(\mathbb{R}^d)}\big)\,\mathrm{d}t+\int_{0}^{T}\mathbb{E} \big(\langle f,\mu_N[f]\rangle\big)\,\mathrm{d}t.
\end{equation}
The minimization of this cost functional describes that in the expectation sense, the smoothed empirical measure should be close to the given distribution $z$ with minimal cost from the control function $f$.

Then the goal in the particle part is to obtain, for any given $N$, the existence of $f_N\in {\mathbb{X}}$  such that
\begin{equation}\label{minNproblem}
              J_{N}(\mu_N[f_N],f_N)=\inf_{f\in {\mathbb{X}}} J_{N}(\mu_N[f],f).
\end{equation}
The precise statement of this result and its proof will be given in Section \ref{Nproblem}.

\subsection{Setting of the PDE control problem and the main result}
For any given control function $f\in {\mathbb{X}}$, the expected mean-field limit of \eqref{100} is
\begin{equation}\label{1}
\begin{cases}
\partial_{t}\rho=\Delta\rho-\nabla\cdot(\rho\chi\nabla c),\\
-\Delta c=\rho-f,
\end{cases}
\end{equation}
which is equivalent to
\begin{equation}\label{16}
\partial_{t}\rho=\Delta\rho-\nabla\cdot(\rho\chi\nabla\Phi\ast(\rho-f)).
\end{equation}
Under appropriate condition of the initial data $\rho|_{t=0}=\rho_{0}$, we will show in section \ref{PDEpart} that \eqref{16} has a solution $\rho[f]$. 

Parallel to the particle case, for any given density of a distribution $z\in  L^1(\mathbb{R}^d)\cap L^p(\mathbb{R}^d)$ with $p\in [2, \infty)$, the macroscopic cost functional is defined by
\begin{equation}\label{28}
	J(\rho[f], f)=\int_{0}^{T}\|\rho[f]-z\|_{L^p(\mathbb{R}^d)} \mathrm{d}t+\int_{0}^{T}\langle f,\rho[f]\rangle \mathrm{d}t.
\end{equation}
On the PDE level, the minimization of this cost functional describes that the solution $\rho[f]$ should be close to the given distribution $z$ with minimal cost from the control function $f$. Optimal control of this problem is to find an $\overline f\in {\mathbb{X}}$ such that
\begin{equation}\label{minPDEproblem}
	J(\rho[\overline{f}], \overline{f})=\inf_{f\in {\mathbb{X}}} J(\rho[f],f).
\end{equation}
We obtain the existence of optimal control through mean-field limit of the particle control problem \eqref{minNproblem}. The main result of this paper is

\begin{theorem}\label{mainthm}
	Assume the initial data $\rho_0\geq 0$ satisfies \begin{equation}\label{29}
		\rho_{0}\in L^{1}(\mathbb{R}^d; (1+|x|^2)\,  \mathrm{d}x)\cap W^{1, q}(\mathbb{R}^d),\quad q>d,
			\end{equation}
    then for any fixed $\varepsilon$ and $N$, the cost function $J_{N}(\mu_N[f], f)$ has a minimizer $\widetilde{f}\in {\mathbb{X}}$, i.e.
	\begin{equation*}
		J_{N}(\mu_N[\widetilde{f}], \widetilde{f})=\min_{f\in {\mathbb{X}}} J_{N}(\mu_N[f],f),
	\end{equation*}
	where $\mu_N[f]$ is the empirical measure of the solution of \eqref{100}.
	
In addition, for all $ T>0$ and $ f\in  {\mathbb{X}}$, there exists a constant $\Theta$, such that if
	\begin{align}\label{rho0}
		\big\|\rho_{0}\big\|_{L^{\frac{d}{2}}(\mathbb{R}^d)}^{\frac{d}{2}}\leq\Theta,
			\end{align}
	then there is a solution of \eqref{16}, $\rho[f]\in L^\infty(0,T; L^{\frac{d}{2}}(\mathbb{R}^d)\cap W^{1, q}(\mathbb{R}^d))$.
	\vskip3mm
Furthermore, there exists $\beta_*>0$, for $\varepsilon=N^{-\beta}$ with $ \beta\in(0, \beta_*)$, let $f_{N}\in {\mathbb{X}}$ be any minimizer of $J_{N}(\mu_N[f], f)$, then any weak accumulation point  $\overline{f}$ of $(f_N)_{N\in\mathbb{N}}$ is a minimizer of $J(\rho[f],f)$, namely
	\begin{equation*}
		J(\rho[\overline{f}], \overline{f})=\min_{f\in {\mathbb{X}}}J(\rho[f],f).
	\end{equation*}
\end{theorem}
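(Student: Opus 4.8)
\emph{Proof strategy.} The three assertions are handled in turn.

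\textbf{(i) Existence of the particle minimizer.} For fixed $N$ and $\varepsilon$ the regularized potential $\widetilde\Phi_\varepsilon=j_\varepsilon\ast\widetilde\Phi$ is $C^\infty$ with all derivatives bounded in terms of $\varepsilon$, so for each $f\in X$ both the interaction drift and the control drift $x\mapsto\chi\nabla\widetilde\Phi_\varepsilon\ast f(x)$ are smooth with bounds controlled by $\varepsilon$ and $l(t)$; hence \eqref{100} is classically well posed and $X\ni f\mapsto X_i^{N,\varepsilon}[f]$ is well defined. I would run the direct method. The set $X$ is bounded, convex and closed, hence weakly sequentially compact, in the reflexive space $L^r(0,T;W^{1,q}(\mathbb R^d))$, so a minimizing sequence $f_k$ admits a subsequence $f_k\rightharpoonup\widetilde f\in X$. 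It remains to show $J_N$ is sequentially weakly lower semicontinuous. Convolution with the \emph{fixed} smooth kernel $\nabla\widetilde\Phi_\varepsilon$ turns $f_k\rightharpoonup\widetilde f$ into locally uniform convergence of the drift, so a Gronwall argument gives $\sup_{[0,T]}\mathbb E\,|X_i^{N,\varepsilon}[f_k]-X_i^{N,\varepsilon}[\widetilde f]|^2\to0$; since $j_\varepsilon$ is smooth and $W^{1,q}\hookrightarrow C^{0,\alpha}_{\mathrm{loc}}$ compactly for $q>d$, the integrands $\|j_\varepsilon\ast\mu_N[f_k]-z\|_{L^p}$ and $\langle f_k,\mu_N[f_k]\rangle=\frac1N\sum_i f_k\big(X_i^{N,\varepsilon}[f_k]\big)$ converge pointwise in $(t,\omega)$, and the uniform second-moment bound for the particles together with $\|f_k\|_{W^{1,q}}\le l\in L^r$ let one pass the limit through $\mathbb E$ and $\int_0^T$. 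This in fact gives continuity of $J_N$, hence the minimizer $\widetilde f$.

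\textbf{(ii) Global solvability of \eqref{16}.} I would first establish local existence and uniqueness in $C([0,\tau];L^{d/2}\cap W^{1,q})$ by a contraction mapping based on the heat semigroup, treating the singular part $\nabla\Phi\ast\rho$ as in the Keller--Segel theory and the control part $\nabla\Phi\ast f\in W^{2,q}\subset C^1$ as a lower-order term bounded by $l(t)$. The relevant continuation criterion is control of $\|\rho(t)\|_{L^{d/2}}$: testing \eqref{16} with $\rho^{d/2-1}$ and using $-\Delta(\Phi\ast(\rho-f))=\rho-f$ gives the standard balance between the dissipation $\sim\|\nabla\rho^{d/4}\|_{L^2}^2$ and an aggregation term which, by the Gagliardo--Nirenberg/Sobolev inequality, is $\le C\|\rho\|_{L^{d/2}}^{2/d}\|\nabla\rho^{d/4}\|_{L^2}^2$ plus a control contribution linear in $\|\nabla\Phi\ast f\|$. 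The threshold $2^d(1-2/d)^d$ in \eqref{rho0} is precisely the one making the aggregation term absorbable; a Gronwall estimate against $\|l\|_{L^r}^r$ then produces the bound with the $\exp\{-2C(d,r)\|l\|_{L^r}^r\}$ factor and yields $\rho[f]\in L^\infty(0,T;L^{d/2})$ for every $T$. Feeding this into maximal parabolic regularity and using $\rho_0\in W^{1,q}$, $\nabla\rho_0\in L^2$, a bootstrap upgrades to $\rho[f]\in L^\infty(0,T;W^{1,q})$ — the ``boot-strap argument'' announced in the abstract.

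\textbf{(iii) Mean field limit of the control problem.} This is a $\Gamma$-convergence statement whose analytic engine is a \emph{quantitative propagation of chaos, uniform over $f\in X$}: there is $\beta_*>0$ such that for $\varepsilon=N^{-\beta}$, $\beta\in(0,\beta_*)$,
\[
	\int_0^T\mathbb E\,\big\|j_\varepsilon\ast\mu_N[f]-\rho[f]\big\|_{L^p}\,dt\longrightarrow0,\qquad \int_0^T\mathbb E\,\big|\langle f,\mu_N[f]\rangle-\langle f,\rho[f]\rangle\big|\,dt\longrightarrow0,
\]
with rates independent of $f$. I would obtain this by combining a convergence-in-probability estimate for the trajectories in the spirit of Lazarovici--Pickl (which controls the rare close encounters, where the singular kernel is dangerous) with the relative-entropy/modulated-energy estimate of Bresch--Jabin--Wang and Chen--Holzinger--Huo between the joint law of \eqref{100} and the product law built from \eqref{16}; the regularity of $\rho[f]$ from step (ii) is exactly what these estimates require, and the extra control drift enters only as a time-dependent Lipschitz perturbation bounded by $l(t)$ — the reason $f$ need not be compactly supported. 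The value $\beta_*$ is fixed by balancing the $\varepsilon$-cutoff error of $\widetilde\Phi_\varepsilon$ against the $N^{-1}$-type terms in the entropy inequality. Granting this, the \emph{recovery} inequality is immediate: for fixed $g\in X$, taking $f\equiv g$ and using dominated convergence in $t$ (via the a priori bounds) gives $J_N(\mu_N[g],g)\to J(\rho[g],g)$, hence $\limsup_N\inf_{f\in X}J_N(\mu_N[f],f)\le\inf_{f\in X}J(\rho[f],f)$.

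For the $\liminf$ inequality, let $f_N$ be minimizers and $f_N\rightharpoonup\overline f$ along a subsequence, with $\overline f\in X$ by weak closedness of $X$. I then need $\mu_N[f_N]\to\rho[\overline f]$, which follows from the uniform propagation of chaos ($\mu_N[f_N]-\rho[f_N]\to0$) combined with stability of \eqref{16} under weak convergence of the control ($\rho[f_N]\to\rho[\overline f]$, since $\nabla\Phi\ast f_N\to\nabla\Phi\ast\overline f$ locally uniformly while the $L^{d/2}\cap W^{1,q}$ bounds are uniform). Lower semicontinuity of the $L^p$-norm then gives $\liminf_N\int_0^T\mathbb E\|j_\varepsilon\ast\mu_N[f_N]-z\|_{L^p}\,dt\ge\int_0^T\|\rho[\overline f]-z\|_{L^p}\,dt$, and writing $\langle f_N,\mu_N[f_N]\rangle=\langle f_N,\mu_N[f_N]-\rho[f_N]\rangle+\langle f_N,\rho[f_N]-\rho[\overline f]\rangle+\langle f_N,\rho[\overline f]\rangle$ and using $f_N\rightharpoonup\overline f$ together with $\rho[\overline f]\in L^\infty\cap L^{d/2}$ handles the cost term; hence $\liminf_N J_N(\mu_N[f_N],f_N)\ge J(\rho[\overline f],\overline f)$. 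Chaining this with $J(\rho[\overline f],\overline f)\ge\inf_{f\in X}J(\rho[f],f)$ and the recovery inequality forces all these quantities to coincide, so $\overline f$ minimizes $J$ and $J_N(\mu_N[f_N],f_N)\to\min_{f\in X}J(\rho[f],f)$. The main obstacle is the uniform-in-$f$ quantitative propagation of chaos for the Coulomb/Keller--Segel kernel: marrying the convergence-in-probability control of close encounters with the relative-entropy estimate while tracking the dependence of every constant on $\|l\|_{L^r}$ and on the borderline $L^{d/2}$ regularity of $\rho[f]$, and pinning down the window $(0,\beta_*)$ in which the $\varepsilon$- and $N$-errors close simultaneously; the companion PDE stability $\rho[f_N]\to\rho[\overline f]$ under merely weak control convergence, in the same regularity class, is the secondary difficulty.
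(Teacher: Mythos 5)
Your overall organization mirrors the paper: parts (ii) and (iii) track the authors' argument quite faithfully. For (ii) you correctly identify local existence, the $L^{d/2}$ continuation criterion with the Gagliardo--Nirenberg absorption threshold $2^d(1-2/d)^d$, the Gronwall estimate against $\|l\|_{L^r}$, and the bootstrap to $W^{1,q}$; the paper uses a Picard iteration in $L^\infty(0,T^*;L^{3d/4})$ rather than a semigroup contraction, but that is cosmetic. For (iii) you correctly name the two propagation-of-chaos ingredients (Lazarovici--Pickl convergence-in-probability plus Jabin--Wang/Chen--Holzinger--Huo relative entropy), the recovery and liminf inequalities, and the PDE stability $\rho[f_{N_k}]\to\rho[\bar f]$ under weak control convergence — all matching Propositions \ref{Convergenceinprobability}, \ref{k}, Lemma \ref{lemmaJNtoJ} and Lemma \ref{lemweakcompact}.

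Part (i), however, takes a genuinely different route, and as written it has a gap. You work on the SDE \eqref{100} trajectories directly; the paper deliberately avoids this (``Instead of directly applying the classical result for SDE control problems\dots we will study the corresponding control problem for the Liouville equation'') and instead runs the direct method on the $Nd$-dimensional linear Fokker--Planck equation \eqref{22}, obtaining strong $L^2(0,T;L^1)$ compactness of $\rho^{N,\varepsilon}[f_l]$ and of its first marginal $\rho^{N,\varepsilon;1}[f_l]$ via uniform $L^2$, $H^1$, and second-moment bounds plus Aubin--Lions. The reason this strong compactness is indispensable is the bilinear cost term $\int_0^T\!\int f_l\,\rho^{N,\varepsilon;1}[f_l]$: since $f_l\rightharpoonup\widetilde f$ only weakly, the marginal must converge strongly. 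Your claim that ``convolution with the fixed smooth kernel $\nabla\widetilde\Phi_\varepsilon$ turns $f_k\rightharpoonup\widetilde f$ into locally uniform convergence of the drift'' is not correct: the convolution upgrades spatial regularity, but $f_k$ is only bounded in $L^r(0,T;W^{1,q})$ with no time-derivative control, so the drift $\nabla\widetilde\Phi_\varepsilon\ast f_k(t,\cdot)$ has no strong compactness in the time variable, and the compact embedding $W^{1,q}\hookrightarrow C^{0,\alpha}_{\mathrm{loc}}$ you invoke is purely spatial and does not upgrade a weakly converging sequence to a strongly converging one. Consequently the Gronwall step, as justified, does not close. One could try to salvage the SDE route by observing that, along the fixed process $X[\widetilde f]$, the error term $\int_0^t\nabla\widetilde\Phi_\varepsilon\ast(f_k-\widetilde f)(s,X[\widetilde f](s))\,\mathrm{d}s$ is the pairing of $f_k-\widetilde f$ against a fixed dual element and hence vanishes pointwise in $(t,\omega)$, then invoke dominated convergence inside the Gronwall iteration; but that is a different, more delicate argument than the one you give, and it is precisely the subtlety the paper's Liouville strategy is designed to avoid.
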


\begin{remark}
The detailed expression of $\Theta$ will be given in Section \ref{PDEpart}.
\end{remark}

\subsection{Discussion and arrangement of the paper}
The main result shows that we obtain the existence of the minimizer of $J(\rho[f],f)$ not from direct analysis of the PDE control problem, but through proceeding the limit of the particle control problems. The main purpose is to describe the macroscopic control problem in a microscopic level which helps in understanding the same control problem in different scales.

\vskip3mm

The key steps in proving Theorem \ref{mainthm} include:
\begin{enumerate}
	\item Solvability of the particle control problem \eqref{minNproblem}. {Instead of directly applying the classical result for SDE control problems in \cite{HL90}, which has been used in \cite{lacker} for the particle control problem, we will study the corresponding control problem for the Liouville equation.} This result is given in Section \ref{Nproblem}.
	
	\item Well-posedness of initial value problem of diffusion aggregation equation \eqref{16}. This result provides additional estimates of the solution, which can in some sense absorb the singularity of the potential in the mean-field limit discussion. Due to the strong aggregation result, finite time blow-up is expected \cite{BDP2006,Perthamebook,CPZ04}. Therefore, we work only with the sub-critical case, namely an additional restriction on the initial data \eqref{rho0} is given, similar to the case for multi-dimensional Keller-Segel system in \cite{CPZ04}. Unfortunately, due to the appearance of additional control function, the argument given in \cite{CPZ04} does not work directly. Instead, after proving a local existence result, we use the bootstrap argument to extend the solution to any given positive time $T>0$.
	
	\item The mean-field limit discussion for given control function. Due to the singularity of interaction force, we introduce an intermediate problem, the mean-field limit of \eqref{100} for fixed $\varepsilon$ such as has been done in several literature \cite{Oeschlager1984,CDJ2019,CHJZ2021,ChenHolzingerHuo2023}. The mean-field limit result is given in two formulations, i.e. convergence in probability in the sense given in \cite{lazarovici2017mean} and the $L^1$ strong convergence by estimating the relative entropy \cite{JW2016,ChenHolzingerHuo2023}. The novelty at this step is the integration of convergence in probability into the relative entropy estimate, so that one avoids the large deviation estimate provided in \cite{JabinWang2018}. Both results on the mean-field limit discussion will be used to prove the continuity of cost functionals. This part is given in Section \ref{meanfield}.
	
	\item Limits of cost functional and proof of Theorem \ref{mainthm}. There are several different statements in the discussion on the limits related to cost functional, which will be applied in the so-called $\Gamma-$convergence strategy. Due to the strong $L^1$ convergence in the mean-field limit result we can obtain, some intermediate steps in $\Gamma-$convergence framework can be much simplified. More details are given in Section \ref{proofmainthm} in the proof of the main theorem.
\end{enumerate}

As a summary, this paper aims to provide a mean-field control theory for system with singular aggregation effect. The strong result in obtaining the propagation of chaos makes it possible to get rid of the usual requirement that control functions share the same compact support, for example in \cite{6,2,1}.

\section{Optimal control problem for N-particle system}\label{Nproblem}

In this section we will prove that the particle control problem \eqref{minNproblem} has a solution $f_N\in {\mathbb{X}}$. As has been mentioned in the introduction, we work directly on the corresponding high dimensional PDE control problem, not with the SDE system \eqref{100}. After applying It\^{o}'s formula and taking expectation of it, one can easily obtain that the joint distribution of $X_{i}^{N, \varepsilon}$, $i=1,\cdots,N$, denoted by $\rho^{N,\varepsilon}$, satisfies the following linear parabolic PDE in the sense of distribution:
\begin{equation}\label{22}
		\partial_{t}\rho^{N, \varepsilon}-\sum_{i=1}^{N}\Delta_{x_{i}}\rho^{N, \varepsilon}+\sum_{i=1}^{N}\nabla_{x_{i}}\cdot\Big(\chi\rho^{N, \varepsilon}\big(\frac{1}{N}\sum_{j=1}^{N}\nabla\widetilde{\Phi}_{\varepsilon}(x_{i}-x_{j})-\nabla\widetilde{\Phi}_{\varepsilon}\ast f(x_{i})\big)\Big)=0,		
\end{equation}
given initial data $\rho^{N, \varepsilon}|_{t=0}=\rho_{0}^{\otimes N}=\rho_{0}(x_{1})\rho_{0}(x_{2})\cdots\rho_{0}(x_{N})$.
Since for fixed $N,\varepsilon$, the coefficients appeared in \eqref{22} are smooth, one can apply classical theory on Cauchy problem of second order parabolic PDE and obtain that \eqref{22} has a unique classical solution $\rho^{N,\varepsilon}[f]$. 

For any given distribution with density function $z\in  L^1(\mathbb{R}^d)\cap L^p(\mathbb{R}^d)$ with $p\in [2,\infty)$, because of the symmetric property of $\rho^{N,\varepsilon}[f](t,x_1,\cdots,x_N)$, i.e. $\forall i\neq j$,
$$
\rho^{N,\varepsilon}[f](t,x_1,\cdots,x_i,\cdots,x_j,\cdots, x_N)=\rho^{N,\varepsilon}[f](t,x_1,\cdots,x_j,\cdots,x_i,\cdots, x_N),
$$ 
the cost functional given in \eqref{27} can be rewritten into 
\begin{align}	\nonumber &J_{N}(\mu_N[f],f)=\widetilde J_N(\rho^{N,\varepsilon}[f],f)
	\\
\nonumber=&\int^T_0\int_{\mathbb{R}^{dN}}\Big\|\frac{1}{N}\sum_{i=1}^Nj_{\varepsilon}(\cdot-x_i)-z(\cdot)\Big\|_{L^p(\mathbb{R}^d)}\rho^{N,\varepsilon}[f](t,x_1,\cdots,x_N) \mathrm{d}x_1\cdots  \mathrm{d}x_N\mathrm{d}t\\
		\label{NparticleJ}&\hspace{5mm}+\int_{0}^{T}\int_{\mathbb{R}^{d}} f(t,x)\rho^{N,\varepsilon;1}[f](t,x) \mathrm{d}x \mathrm{d}t,
\end{align}
where $\rho^{N,\varepsilon;1}[f]$ is the $1$-st marginal density of $\rho^{N,\varepsilon}[f]$, i.e.
$$
\rho^{N,\varepsilon;1}[f](t,x)=\int_{\mathbb{R}^{d(N-1)}}\rho^{N,\varepsilon}[f](t,x,x_2,\cdots,x_N)  \mathrm{d}x_2\cdots  \mathrm{d}x_N.
$$ It satisfies the equation, by integrating equation \eqref{22} in $x_2,\cdots, x_N$,
\begin{equation}\label{23}
	\partial_{t}\rho^{N, \varepsilon; 1}[f]-\Delta\rho^{N,\varepsilon; 1}[f]+\nabla\cdot\Big(\chi\int_{\mathbb{R}^d}\rho^{N,\varepsilon; 2}[f](x, y)\big(\nabla\widetilde{\Phi}_{\varepsilon}(x-y)-\nabla\widetilde{\Phi}_{\varepsilon}\ast f(x)\big)\,  \mathrm{d}y\Big)=0.
\end{equation}

The main result of this section is
\begin{proposition}\label{propNcontrol} Assume that $\rho_{0}\in L^{1}(\mathbb{R}^d; (1+|x|^2)\,  \mathrm{d}x)\cap L^{\infty}(\mathbb{R}^d)$, then there exists $\widetilde f\in {\mathbb{X}}$ such that 
	$$
	\widetilde J_N(\rho^{N,\varepsilon}[\widetilde f],\widetilde f)=\inf_{f\in {\mathbb{X}}} \widetilde J_N(\rho^{N,\varepsilon}[f],f).
	$$
\end{proposition}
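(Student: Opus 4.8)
The plan is to argue by the direct method of the calculus of variations, working throughout with the Liouville equation \eqref{22} and its first marginal \eqref{23}, as is already done in the discussion preceding the proposition, rather than with the SDE system \eqref{100}. First one checks that the minimization is well posed: $X\neq\emptyset$ (for instance $f\equiv 0\in X$), and $J_N$ is bounded below on $X$, since for $f\in X$ the Morrey embedding $W^{1,q}(\mathbb{R}^d)\hookrightarrow L^\infty(\mathbb{R}^d)$ (using $q>d$) gives $\|f(t,\cdot)\|_{L^\infty}\le C\,l(t)$, while $\rho^{N,\varepsilon;1}[f](t,\cdot)$ is a probability density; hence the running cost in \eqref{NparticleJ} is bounded by $\|j_\varepsilon\|_{L^p}+\|z\|_{L^p}$ and the control cost in absolute value by $C\|l\|_{L^1(0,T)}$. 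Thus $m:=\inf_{f\in X}J_N(\rho^{N,\varepsilon}[f],f)$ is a finite real number; fix a minimizing sequence $(f_n)_n\subset X$ and set $\rho_n:=\rho^{N,\varepsilon}[f_n]$.

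Next one extracts compactness. The constraint defining $X$ makes $(f_n)$ bounded in the reflexive space $L^r(0,T;W^{1,q}(\mathbb{R}^d))$ (and, $f$ representing a mass distribution, also uniformly bounded in $L^r(0,T;L^1(\mathbb{R}^d))$), so along a subsequence $f_n\rightharpoonup\widetilde f$ weakly; since $\{g:\|g(t,\cdot)\|_{W^{1,q}}\le l(t)\text{ a.e.}\}$ is convex and strongly closed it is weakly closed, whence $\widetilde f\in X$. For the states, $\varepsilon$ and $N$ being fixed, $\widetilde{\Phi}_\varepsilon\in C^\infty$ has bounded derivatives, so the drift fields $\frac1N\sum_j\nabla\widetilde{\Phi}_\varepsilon(x_i-x_j)-\nabla\widetilde{\Phi}_\varepsilon\ast f_n(x_i)$ are bounded in $L^\infty$ uniformly in $n$ (the convolution term being controlled by $\|\nabla\widetilde{\Phi}_\varepsilon\|_{L^{q'}_{loc}}\|f_n\|_{L^q}+\|\nabla\widetilde{\Phi}_\varepsilon\|_{L^\infty}\|f_n\|_{L^1}$). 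Standard energy estimates for the linear parabolic equation \eqref{22} — conservation of mass, a Gronwall estimate for the second moment, and the $L^2$--$H^1$ energy identity, starting from $\rho_0^{\otimes N}\in L^1\cap L^2$ with finite second moment, which follows from the hypotheses on $\rho_0$ — then yield, uniformly in $n$, that $\rho_n$ is bounded in $L^\infty(0,T;L^2(\mathbb{R}^{dN}))\cap L^2(0,T;H^1(\mathbb{R}^{dN}))$ and $\partial_t\rho_n$ in $L^2(0,T;H^{-1})$. By Aubin--Lions together with the moment bound (to prevent mass escaping to infinity), $\rho_n\to\rho_*$ strongly in $L^2(0,T;L^2(\mathbb{R}^{dN}))$ and a.e.; the same holds for the first marginals, $\rho_n^{1}\to\rho_*^{1}$ in $L^2(0,T;L^2(\mathbb{R}^d))$, and interpolating with the uniform $L^1$ and moment bounds upgrades this to strong convergence of $\rho_n^{1}$ in $L^{r'}(0,T;L^{q'}(\mathbb{R}^d))$.

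Then one identifies the limit and passes to the limit in the cost. The only nonlinearity is the map $f\mapsto\rho^{N,\varepsilon}[f]$, and it depends on $f$ solely through the affine term $\nabla\widetilde{\Phi}_\varepsilon\ast f$; since convolution with the fixed kernel is weak-to-weak continuous and $\nabla\widetilde{\Phi}_\varepsilon\ast f_n$ is uniformly bounded, in the weak formulation of \eqref{22} the drift contribution $\chi\rho_n\big(\frac1N\sum_j\nabla\widetilde{\Phi}_\varepsilon(\cdot-x_j)-\nabla\widetilde{\Phi}_\varepsilon\ast f_n\big)$ is the product of a strongly convergent factor and a weakly convergent bounded factor, hence passes to the limit; so $\rho_*$ solves \eqref{22} with control $\widetilde f$, and by uniqueness of classical solutions $\rho_*=\rho^{N,\varepsilon}[\widetilde f]$. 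In \eqref{NparticleJ}, the map $(x_1,\dots,x_N)\mapsto\big\|\frac1N\sum_i j_\varepsilon(\cdot-x_i)-z\big\|_{L^p(\mathbb{R}^d)}$ is bounded and continuous (continuity of translation in $L^p$), so pairing it with the tight, strongly convergent $\rho_n$ gives convergence of the first term; the control cost $\int_0^T\!\int_{\mathbb{R}^d}f_n\,\rho_n^{1}\,\mathrm{d}x\,\mathrm{d}t$ is the duality pairing of $f_n\rightharpoonup\widetilde f$ in $L^r(0,T;L^q)$ with $\rho_n^{1}\to\rho_*^{1}$ strongly in $L^{r'}(0,T;L^{q'})$, hence converges. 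Therefore $J_N(\rho^{N,\varepsilon}[\widetilde f],\widetilde f)=\lim_n J_N(\rho_n,f_n)=m$, so $\widetilde f$ is the desired minimizer.

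I expect the main obstacle to be obtaining \emph{strong} compactness of the marginal $\rho_n^{1}$ in a topology in duality with the one in which $f_n$ converges only weakly, so that the bilinear control cost $\int f_n\rho_n^{1}$ passes to the limit. This relies on uniform-in-$n$ parabolic estimates for \eqref{22} and \eqref{23} that survive the control-induced drift — which is exactly why the potential is regularized at scale $\varepsilon$ — and on a tightness argument from the second-moment bound to compensate for the non-compactness of $\mathbb{R}^d$, since the Aubin--Lions compactness is only local in space and loss of mass at infinity must be ruled out in all three terms. A secondary point needing care is making sense of and passing to the limit in $\nabla\widetilde{\Phi}_\varepsilon\ast f_n$ for controls that are only bounded in $W^{1,q}$: here one uses that the controls are integrable densities together with $\nabla\widetilde{\Phi}_\varepsilon\in L^\infty\cap L^{q'}_{loc}$.
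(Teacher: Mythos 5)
Your proposal is correct and follows essentially the same path as the paper: a minimizing sequence, weak compactness of $X$, uniform $L^2$--$H^1$ and second-moment estimates for both the Liouville equation \eqref{22} and its first marginal \eqref{23}, Aubin--Lions to get strong convergence of $\rho^{N,\varepsilon}[f_l]$ and $\rho^{N,\varepsilon;1}[f_l]$, identification of the limit through the weak formulation, and then strong/weak pairing to pass to the limit in both terms of \eqref{NparticleJ}. The only cosmetic difference is the choice of topologies for the strong convergence of the marginal (the paper uses $L^2(0,T;L^1)$ paired against the Morrey-embedded $L^\infty$ bound $\|f\|_{L^\infty}\lesssim l(t)$, whereas you upgrade to $L^{r'}(0,T;L^{q'})$ and pair with the $L^r(0,T;L^q)$ weak convergence of the controls); both are valid, and the key mechanism — strong convergence of the state in duality with the only-weak convergence of the control in the bilinear cost term — is the same.
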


\begin{proof}
	Let $f_{k}\in {\mathbb{X}}$ be a minimization sequence of this optimization problem, namely
	$$
	\lim_{k\rightarrow\infty}\widetilde J_N(\rho^{N,\varepsilon}[f_k],f_k)=\inf_{f\in {\mathbb{X}}} \widetilde J_N(\rho^{N,\varepsilon}[f],f).
	$$
	Since ${\mathbb{X}}$ is bounded subset of $L^r(0,T;W^{1,q}(\mathbb{R}^d))$, there is a weakly convergent subsequence, not relabeled, with $\widetilde{f}$ as its weak limit in ${\mathbb{X}}$. It remains to prove that, we use $f_k$ to be the not relabeled subsequence, 
	\begin{equation*}
		\lim_{k\to\infty}\widetilde J_{N}(\rho^{N,\varepsilon}[f_k], f_k)=\widetilde J_{N}(\rho^{N,\varepsilon}[\widetilde f],\widetilde f).
	\end{equation*}
We divide the rest of the proof into following steps:

{\it Step 1.} Prove that 
	\begin{equation}\label{rhoNstrongN}
		\rho^{N, \varepsilon}[f_{k}]\longrightarrow\rho^{N, \varepsilon}[\widetilde{f}]\ \text{in}\ L^2(0,T;L^1(\mathbb{R}^{dN})),\quad k\rightarrow \infty.
	\end{equation}
Actually, in order to take limit in the first term of \eqref{NparticleJ}, we only need weak convergence of $\rho^{N, \varepsilon}[f_{k}]$. We show here the strong convergence since it comes automatically from the uniform estimate and compactness argument, which does not need extra efforts. The standard $L^2$ estimate of the second order linear parabolic equation \eqref{22} implies that 
	\begin{align}\label{L2Nsystem}
		&\|\rho^{N, \varepsilon}[f_{k}]\|_{L^\infty(0,T;L^2(\mathbb{R}^{dN}))}+\|\rho^{N, \varepsilon}[f_{k}]\|_{L^2(0,T;H^1(\mathbb{R}^{dN}))}+\|\partial_{t}\rho^{N, \varepsilon}[f_{k}]\|_{L^2(0,T;H^{-1}(\mathbb{R}^{dN}))}\leq C(N,\varepsilon, T).
	\end{align}
For the second moment estimate, we multiply the equation \eqref{22} by $\sum_{i=1}^N|x_i|^2$ and obtain
\begin{align*}
		&\frac{ \mathrm{d}}{\mathrm{d}t}\int_{\mathbb{R}^{dN}} \sum_{i=1}^N|x_i|^2\rho^{N, \varepsilon}[f_k]\,  \mathrm{d}x_1\cdots  \mathrm{d}x_N-\sum_{i,m=1}^{N}\int_{\mathbb{R}^{dN}}|x_i|^2\Delta_{x_{m}}\rho^{N, \varepsilon}[f_k]\,  \mathrm{d}x_1\cdots  \mathrm{d}x_N\\
		&\quad +\sum_{i,m=1}^{N}\int_{\mathbb{R}^{dN}}|x_i|^2\nabla_{x_{m}}\cdot\Big(\chi\rho^{N, \varepsilon}[f_k](\frac{1}{N}\sum_{j=1}^{N}\nabla\widetilde{\Phi}_{\varepsilon}(x_{i}-x_{j})-\nabla\widetilde{\Phi}_{\varepsilon}\ast f_k)\Big) \,  \mathrm{d}x_1\cdots  \mathrm{d}x_N =0.
\end{align*}
Further estimates by using
$$
\|\nabla\widetilde{\Phi}_{\varepsilon}\|_{L^{\infty}}+\|\nabla\widetilde{\Phi}_{\varepsilon}\ast f_k\|_{L^{\infty}}\leq C\varepsilon^{-(d-1)}+C(d)(\|f_k\|_{L^1}+\|f_k\|_{L^\infty})\leq C\varepsilon^{-(d-1)}+C(d)l(t)
$$
show that
\begin{align*}
		&\frac{ \mathrm{d}}{\mathrm{d}t}\int_{\mathbb{R}^{dN}} \sum_{i=1}^N|x_i|^2\rho^{N, \varepsilon}[f_k]\,  \mathrm{d}x_1\cdots  \mathrm{d}x_N\\
		\leq &2Nd+2\sum_{i=1}^{N}\int_{\mathbb{R}^{dN}}\nabla_{x_{m}}|x_i|^2\cdot\Big(\chi\rho^{N, \varepsilon}[f_k](\frac{1}{N}\sum_{j=1}^{N}\nabla\widetilde{\Phi}_{\varepsilon}(x_{i}-x_{j})-\nabla\widetilde{\Phi}_{\varepsilon}\ast f_k)\Big)\,  \mathrm{d}x_1\cdots  \mathrm{d}x_N\\
		\leq&2Nd+2\chi\big(\|\nabla\widetilde{\Phi}_{\varepsilon}\|_{L^{\infty}}+\|\nabla\widetilde{\Phi}_{\varepsilon}\ast f_k\|_{L^{\infty}}\big)\sum_{i=1}^{N}\int_{\mathbb{R}^{dN}} (1+ |x_{i}|^2)\rho^{N, \varepsilon}[f_k]\,  \mathrm{d}x_1\cdots\mathrm{d}x_N\\
		\leq&C(N, \varepsilon, \chi, d)(1+l(t))\int_{\mathbb{R}^{dN}} \sum_{i=1}^N|x_i|^2\rho^{N, \varepsilon}[f_k]\,  \mathrm{d}x_1\cdots  \mathrm{d}x_N+C(N, \varepsilon, \chi, d)(1+l(t)).
\end{align*}
Using the Gronwall's inequality, we obtain the estimate for second moment. 
The $L^2$ estimate in \eqref{L2Nsystem}, together with the conservation of mass and the boundedness of second moment, i.e.
\begin{align}\label{Y1}
	\big\|\rho^{N, \varepsilon}[f_{k}]\big\|_{L^\infty(0,T;L^1(\mathbb{R}^{dN}))}=1,\quad&\int_{\mathbb{R}^{dN}}\sum_{i=1}^N|x_i|^2\rho^{N, \varepsilon}[f_{k}] \mathrm{d}x_1\cdots  \mathrm{d}x_N\leq C(N,\varepsilon, \chi, d, T),
\end{align}
implies the compact embedding, by applying Lemma \ref{compactembedding}. This allows us to use the Aubin-Lions lemma to obtain a strongly convergent subsequence, not relabeled, such that
	\begin{equation}\label{Y2}
		\rho^{N, \varepsilon}[f_k]\to\rho^{N, \varepsilon},\ \text{in}\ L^2(0,T;L^2 (\mathbb{R}^{dN})).
	\end{equation}
	Actually, this $L^2$ convergence implies also $L^1$ convergence due to the boundedness of second moment in (\ref{Y1}). More precisely,
for arbitrary small $\varepsilon_{1}>0$, we choose $R$ such that the following estimate holds,
	\begin{equation*}
	\begin{split}
	&\|\rho^{N, \varepsilon}[f_{k}]-\rho^{N, \varepsilon}\|_{L^2(0, T; L^1(\mathbb{R}^{dN}))}\\
	\leq&\|\rho^{N, \varepsilon}[f_{k}]-\rho^{N, \varepsilon}\|_{L^2(0, T; L^2(B_{R}))}|B_{R}|+\int_{0}^T\Big(\int_{B_{R}^c}\frac{|x|^2}{R^2}|\rho^{N, \varepsilon}[f_{k}]-\rho^{N, \varepsilon}|\, \mathrm{d}x_{1}\cdots \mathrm{d}x_{N})^2\ \mathrm{d}t\\
	\leq&\|\rho^{N, \varepsilon}[f_{k}]-\rho^{N, \varepsilon}\|_{L^2(0, T; L^2(B_{R}))}|B_{R}|+C(N, \varepsilon,\chi, d, T)/R^2\\
	\leq&\|\rho^{N, \varepsilon}[f_{k}]-\rho^{N, \varepsilon}\|_{L^2(0, T; L^2(B_{R}))}|B_{R}|+\varepsilon_{1}.
	\end{split}
	\end{equation*}
Therefore for this fixed $R$ the first term vanishes after taking $N\to \infty$ limit. And we obtain 
$$
\lim_{N\rightarrow\infty} \|\rho^{N, \varepsilon}[f_{k}]-\rho^{N, \varepsilon}\|_{L^2(0, T; L^1(\mathbb{R}^{dN}))}\leq \varepsilon_{1}.
$$
Since $\varepsilon_{1}$ is arbitrarily given, we have
	\begin{equation*}
	\rho^{N, \varepsilon}[f_k]\to\rho^{N, \varepsilon},\ \text{in}\ L^2(0,T;L^1 (\mathbb{R}^{dN})).
	\end{equation*}
Furthermore, it is easy to obtain that $\rho^{N, \varepsilon}=\rho^{N, \varepsilon}[\widetilde{f}]$ by taking limits term by term in the weak formulation of \eqref{22}. We omit the details of this standard argument. Therefore the strong convergence in \eqref{rhoNstrongN} is obtained.

{\it Step 2.} Prove that 
\begin{equation}\label{rhoNstrong}
	\rho^{N, \varepsilon;1}[f_{k}]\longrightarrow\rho^{N, \varepsilon;1}[\widetilde{f}]\ \text{in}\ L^2(0,T;L^1(\mathbb{R}^{d})),\quad k\rightarrow \infty.
\end{equation}
This strong convergence is necessary for the convergence discussion of the second term in \eqref{NparticleJ}, since we only have weak convergence for $f_k$.

The equation \eqref{23} for $\rho^{N, \varepsilon;1}$ depends also on the second marginal of $\rho^{N,\varepsilon}$, it is not obvious whether one can get similar estimates to the ones for $\rho^{N, \varepsilon}$ in order to proceed the compactness argument. Actually, one can do it similarly. For completeness, we give details here and present the following estimates,
	$$
	\|\rho^{N, \varepsilon; 1}[f_k]\|_{L^{\infty}(0, T; L^1(\mathbb{R}^{d}, (1+|x|^2) \mathrm{d}x))\cap L^{\infty}(0, T; L^2(\mathbb{R}^{d}))\cap L^2(0, T; H^1(\mathbb{R}^{d}))}\leq C(\varepsilon, \chi, d, T).
	$$

The $L^2$ estimate can be obtained through multiplying the equation (\ref{23}) by $\rho^{N, \varepsilon;1}[f_k]$.
	\begin{equation*}
		\begin{split}
			&\frac{1}{2}\frac{ \mathrm{d}}{\mathrm{d}t}\int_{\mathbb{R}^d} (\rho^{N, \varepsilon; 1}[f_k])^2\,  \mathrm{d}x+\int_{\mathbb{R}^d} |\nabla\rho^{N, \varepsilon; 1}[f_k]|^2\,  \mathrm{d}x\\
			=&\chi\int_{\mathbb{R}^d}\int_{\mathbb{R}^d}\rho^{N, \varepsilon;2}[f_k](x,y)\big(\nabla\widetilde{\Phi}_{\varepsilon}(x-y)-\nabla\widetilde{\Phi}_{\varepsilon}\ast f_k(x)\big)\nabla\rho^{N, \varepsilon; 1}[f_k](x)\, \mathrm{d}y  \mathrm{d}x\\
			\leq&\chi\big(\|\nabla\widetilde{\Phi}_{\varepsilon}\|_{L^{\infty}}+\|\nabla\widetilde{\Phi}_{\varepsilon}\ast f_k\|_{L^{\infty}}\big)\|\rho^{N, \varepsilon; 1}[f_k]\|_{L^2}\|\nabla\rho^{N, \varepsilon; 1}[f_k]\|_{L^2}\\
		    \leq&\frac{1}{2}\|\nabla\rho^{N, \varepsilon; 1}[f_k]\|_{L^2}^2+C(\varepsilon, \chi, d)\|\rho^{N, \varepsilon; 1}[f_k]\|_{L^2}^2	\big(\varepsilon^{-2(d-1)}+l^2(t)\big).
		\end{split}
	\end{equation*}
Then by using Gronwall's inequality, we have
	\begin{equation}\label{b200}
		\|\rho^{N, \varepsilon; 1}[f_k]\|_{L^\infty(0,T;L^2(\mathbb{R}^d))}^2+\int_{0}^{T}\|\nabla\rho^{N, \varepsilon; 1}[f_k]\|_{L^2(\mathbb{R}^d)}^2\, \mathrm{d}t\leq C(\varepsilon, \chi, d, T).
	\end{equation}

 For second moment estimate, we multiply the equation (\ref{23}) by $|x|^2$ and integrating over $\mathbb{R}^d$ and obtain
	\begin{equation*}
		\begin{split}
			\frac{ \mathrm{d}}{\mathrm{d}t}\int_{\mathbb{R}^d} |x|^2\rho^{N,\varepsilon; 1}[f_k]\,  \mathrm{d}x=&2d+\int_{\mathbb{R}^d}\Big(\int_{\mathbb{R}^d}\rho^{N,\varepsilon; 2}[f_k](x, y)(\nabla\widetilde{\Phi}_{\varepsilon}(x-y)-\nabla\widetilde{\Phi}_{\varepsilon}\ast f_k(x))\,  \mathrm{d}y\Big)\cdot x\,  \mathrm{d}x\\
			\leq&2d+(\|\nabla\widetilde{\Phi}_{\varepsilon}\|_{L^{\infty}}+\|\nabla\widetilde{\Phi}_{\varepsilon}\ast f_k\|_{L^{\infty}})\int_{\mathbb{R}^d}\rho^{N, \varepsilon; 1}(1+|x|^2)\,  \mathrm{d}x\\
			\leq &C(\varepsilon, d)(1+l(t))\int_{\mathbb{R}^d} |x|^2\rho^{N,\varepsilon; 1}[f_k]\,  \mathrm{d}x+C(\varepsilon, d)(1+l(t)).
		\end{split}
	\end{equation*}
	Using the Gronwall's inequality to the above inequality, we have the estimate for second moment. {According to the structure of the equation (\ref{23}) and the estimate (\ref{b200}), we have $\|\partial_{t}\rho^{N, \varepsilon; 1}[f_{k}]\|_{L^{2}(0, T; H^{-1}(\mathbb{R}^{d}))}\leq C(\varepsilon, d, T)$}. Then similar to the first step, we obtain the strong convergence \eqref{rhoNstrong} by using Aubin-Lions lemma, the second moment is bounded and Lemma \ref{compactembedding}.
{\it Step 3.} Prove the convergence
\begin{equation}
	\label{JNconvergence}
	\widetilde J_N(\rho^{N,\varepsilon}[f_k],f_k)\longrightarrow \widetilde J_N(\rho^{N,\varepsilon}[\widetilde{f}],\widetilde{f}),\qquad k\rightarrow \infty.
\end{equation}
	According to the definition of the cost function \eqref{NparticleJ}, notice that for given $N,\varepsilon$,
	$$
	\Big\|\frac{1}{N}\sum_{i=1}^Nj_{\varepsilon}(\cdot-x_i)-z(\cdot)\Big\|_{L^p(\mathbb{R}^d)}\in L^\infty(\mathbb{R}^{dN})
	$$
	by the results obtained in {\it step 1} and {\it 2}, we have
	\begin{equation*}
		\begin{split}
			&\widetilde J_N(\rho^{N,\varepsilon}[f_k],f_k)-\widetilde J_N(\rho^{N,\varepsilon}[\widetilde{f}],\widetilde{f})\\
			=&\int^T_0\int_{\mathbb{R}^{dN}}\Big\|\frac{1}{N}\sum_{i=1}^Nj_{\varepsilon}(\cdot-x_i)-z(\cdot)\Big\|_{L^p(\mathbb{R}^d)}(\rho^{N, \varepsilon}[f_{k}]-\rho^{N, \varepsilon}[\widetilde{f}])(t,x_1,\cdots,x_N) \mathrm{d}x_1\cdots  \mathrm{d}x_N\mathrm{d}t\\
			&\hspace{5mm}+\int_{0}^{T}\int_{\mathbb{R}^{d}} \Big(f_k\rho^{N,\varepsilon;1}[f_k]-\widetilde{f}\rho^{N,\varepsilon;1}[\widetilde{f}]\Big) \mathrm{d}x \mathrm{d}t\\
			&\longrightarrow 0\quad\text{as}\quad k\longrightarrow\infty,
              \end{split}
	\end{equation*}	
	where in the last step we have used the weak convergence of $f_k$ and the strong convergence of $\rho^{N,\varepsilon}[f_k]$ and $\rho^{N,\varepsilon;1}[f_k]$.
	
\end{proof}

\section[Existence of solutions]{Existence of solutions for diffusion-aggregation equation}\label{PDEpart}

In this section, we focus on proving the existence result of the limiting diffusion-aggregation equation and the regularized intermediate PDE for given control function $f$. Since we don't add positivity assumption of $f$, the analysis from known well-posedness result of Keller-Segel system in the sub-critical case, for example in \cite{Perthamebook,CPZ04}, does not work. Instead, we provide a bootstrap argument to achieve the global existence result. On top of that, we obtain more regularity of the solution to get estimates for the difference between intermediate solution and the PDE solution.
	
This section is divided into three subsections. In the first subsection, we give the global existence of the weak solution to the system (\ref{1}) with minimum assumption on the initial data. We first prove the local existence in a sub-critical $L^p$ ($p>\frac{d}{2}$) space by using fixed point theory. With additional assumption that the critical $L^{\frac{d}{2}}$ norm of initial data is small, we can proceed the bootstrap argument to extend the solution globally.  In the second subsection, based on the global weak solution obtained from subsection one, if the initial data is in $W^{1,q}(\mathbb{R}^d)$, we establish the same regularity estimate for the solution to the limiting diffusion-aggregation equation. This regularity is used finally in the third subsection to obtain an error estimate of the difference between the limiting diffusion-aggregation equation and the regularized intermediate PDE, i.e. an estimate for $\rho^{\varepsilon}-\rho$.

\vskip3mm
For convenience,in this section we omit the dependence of $[f]$, i.e. $\rho[f]=\rho$ and $\rho^\varepsilon[f]=\rho^\varepsilon$.

\subsection[The global existence of the solution]{The global existence of weak solution} 

We denote
\begin{equation*}
C_{0}:=\|\rho_{0}\|_{L^{\frac{d}{2}}(\mathbb{R}^d)}^{\frac{d}{2}},\ Q_{0}:=\|\rho_{0}\|_{L^{\frac{d}{2}+\eta}(\mathbb{R}^d)}^{\frac{d}{2}+\eta},\ Q=\int_{0}^{\infty}\|f\|_{L^1\cap W^{1, q}}^r\, \mathrm{d}t.
\end{equation*}

We start with the local existence result.

\begin{proposition}\label{PDEest}
For any $\eta\in (|\frac{d}{2}-2|, \frac{d}{2})$, assume the initial data $\rho_0$ satisfy
\begin{equation}\label{b201}
\rho_{0}\in L^\frac{d}{2}(\mathbb{R}^d)\cap L^{\frac{d}{2}+\eta}(\mathbb{R}^d),
\end{equation}
then there exists a time $T^*(Q_{0})>0$ such that the Cauchy problem \eqref{16} has a unique solution satisfying
\begin{equation*}
	\rho\in L^\infty (0,T^*; L^{\frac{d}{2}}(\mathbb{R}^d)\cap L^{\frac{d}{2}+\eta}(\mathbb{R}^d)),
\end{equation*}
\begin{equation*}
		\sup_{0\leq t\leq T^*}\|\rho\|_{L^{\frac{d}{2}+\eta}}^{\frac{d}{2}+\eta}+\int_{0}^{T^*}\|\nabla\rho^{(\frac{d}{4}+\frac{\eta}{2})}\|_{L^2}^2\, \mathrm{d}t\leq 2Q_{0}
		\end{equation*}
and
                \begin{equation*}
       \sup_{0\leq t\leq T^*}\|\rho\|_{L^{\frac{d}{2}}}^{\frac{d}{2}}+\int_{0}^{T^*}\|\nabla\rho^{\frac{d}{4}}\|_{L^2}^2\, \mathrm{d}t\leq 2C_{0}.
        \end{equation*}
\end{proposition}
As it is well-known that for multi-dimentional Keller-Segel system, $\frac{d}{2}$ is the critical norm to obtain global existence of solutions for example \cite{Perthamebook}, due to the additional control function, we choose a subcritical space. We will prove first the local existence by using Banach fixed point theorem in space $L^\infty(0,T^*;L^{\frac{d}{2}+\eta}(\mathbb{R}^d))$. 
	\begin{lemma}\label{suPDE3d}
		Assume that the initial data $\rho_0$ satisfy the condition (\ref{b201}), then there exists a time $\hat{T}^*(Q_{0})>0$ such that the Cauchy problem \eqref{16} has a unique solution satisfying
		\begin{equation*}
			\rho\in L^\infty (0,\hat{T}^*; L^{\frac{d}{2}+\eta}(\mathbb{R}^d))
		\end{equation*}
		and
		\begin{equation*}
		\sup_{0\leq t\leq \hat{T}^*}\|\rho\|_{L^{\frac{d}{2}+\eta}}^{\frac{d}{2}+\eta}+\int_{0}^{\hat{T}^*}\|\nabla\rho^{(\frac{d}{4}+\frac{\eta}{2})}\|_{L^2}^2\, \mathrm{d}t\leq 2Q_{0},
		\end{equation*}
		where $\eta\in (|\frac{d}{2}-2|, \frac{d}{2})$.
	\end{lemma}
	
	\begin{proof}
	We consider the following space	
		\begin{equation*}
			\mathbb{Y}=\{\rho:\|\rho\|_{B_T}\leq 2Q_{0}\},
		\end{equation*}
		where
		\begin{equation*}
			\|\rho\|_{B_T}:=\sup_{0\leq t\leq T}\|\rho\|_{L^{\frac{d}{2}+\eta}}^{\frac{d}{2}+\eta}+{\int_{0}^{T}\|\nabla\rho^{(\frac{d}{4}+\frac{\eta}{2})}\|_{L^2}^2\, \mathrm{d}t}.
		\end{equation*}
		We construct the iterative scheme
		\begin{equation}\label{su2}
			\begin{cases}
				\partial_{t}\rho^{m+1}=\Delta\rho^{m+1}-\nabla\cdot(\rho^{m+1}\chi\nabla c^{m}),\\
				c^{m}=\Phi\ast(\rho^{m}-f),\\
				\rho^{m+1}|_{t=0}=\rho_{0},
			\end{cases}
		\end{equation}
		where $m\geq 0,\ \rho^{0}=0$. In the following, we prove this lemma in the following steps. 
		
		{\it Step 1.} Prove that $(\rho^{m})_{m\in\mathbb{N}}\in\mathbb{Y}$. \\
		We use mathematical induction to prove it. When $m=0$, the first equation of (\ref{su2}) becomes
		\begin{equation}\label{su3}
			\partial_{t}\rho^1=\Delta\rho^1-\nabla\cdot(\rho^1\chi\nabla c^0).
		\end{equation}
		Multiplying the equation (\ref{su3}) by $(\frac{d}{2}+\eta)(\rho^1)^{\frac{d}{2}+\eta-1}$ and integrating with respect to $x$ to get
		\begin{equation}\label{su4}
			\begin{split}
				&\frac{\mathrm{d}}{\mathrm{d}t}\int_{\mathbb{R}^d}(\rho^1)^{\frac{d}{2}+\eta}\, \mathrm{d}x+\frac{4(\frac{d}{2}+\eta-1)}{\frac{d}{2}+\eta}\int_{\mathbb{R}^d} |\nabla(\rho^1)^{\frac{d}{4}+\frac{\eta}{2}}|^2\, \mathrm{d}x\\
				=&-\chi(\frac{d}{2}+\eta-1)\int_{\mathbb{R}^d}\Delta c^0(\rho^1)^{\frac{d}{2}+\eta}\,\mathrm{d}x\\
				=&-\chi(\frac{d}{2}+\eta-1)\int_{\mathbb{R}^d}f(\rho^1)^{\frac{d}{2}+\eta}\,\mathrm{d}x\\
				\leq&\chi(\frac{d}{2}+\eta-1)\|f\|_{L^\infty}\|\rho^1\|_{L^{\frac{d}{2}+\eta}}^{\frac{d}{2}+\eta}.
			\end{split}
		\end{equation}
		Using the Gronwall's inequality to (\ref{su4}), we have
		\begin{equation*}
			\|\rho^{1}\|_{L^{\frac{d}{2}+\eta}}^{\frac{d}{2}+\eta}\leq Q_{0}\exp\Big\{\int_{0}^{t}\chi(\frac{d}{2}+\eta-1)\|f\|_{L^{\infty}}\, \mathrm{d}s\Big\}
			\leq Q_{0}\exp\Big\{C\chi(\frac{d}{2}+\eta-1)Q^{\frac{1}{p}}T^{(1-\frac{1}{p})}\Big\}\leq\frac{3}{2}Q_0,
		\end{equation*}
		if $T\leq T_{1}:=\Big(\big(C\chi(\frac{d}{2}+\eta-1)Q^{\frac{1}{p}}\big)^{-1}\ln\frac{3}{2}\Big)^{\frac{p}{p-1}}$. Then we have
		\begin{equation}\label{su5}
			\begin{split}
				&\|\rho^1\|_{L^{\frac{d}{2}+\eta}}^{\frac{d}{2}+\eta}+\frac{4(\frac{d}{2}+\eta-1)}{\frac{d}{2}+\eta}\int_{0}^{T}\|\nabla(\rho^1)^{\frac{d}{4}+\frac{\eta}{2}}\|_{L^2}^2\, \mathrm{d}t\\
				\leq&\chi(\frac{d}{2}+\eta-1)\frac{3}{2}Q_0\int_{0}^{T}\|f\|_{L^{\infty}}\, \mathrm{d}t+Q_{0}\\
				\leq&C\chi(\frac{d}{2}+\eta-1)\frac{3}{2}Q_0Q^{\frac{1}{p}}T^{(1-\frac{1}{p})}+Q_{0}\\
				\leq&2Q_{0},
			\end{split}
		\end{equation}
		if $T\leq T_{2}:=\Big(C\chi\big(\frac{d}{2}+\eta-1\big)Q^{\frac{1}{p}}\frac{3}{2}\Big)^{-\frac{p}{p-1}}$. Therefore, we have $\rho^1\in\mathbb{Y}$, if $T\leq\hat{T}_{1}:=\min\{T_{1}, T_{2}\}$. Next, assuming $\rho^{n}\in {\mathbb{Y}}, n=2, 3, \cdots, m$, we will prove $\rho^{m+1}\in\mathbb{Y}$. The first equation of (\ref{su2}) becomes
		\begin{equation}\label{su6}
			\partial_{t}\rho^{m+1}=\Delta\rho^{m+1}-\nabla\cdot(\rho^{m+1}\chi\nabla c^{m}).
		\end{equation}
		Multiplying the equation (\ref{su6}) by $(\frac{d}{2}+\eta)(\rho^{m+1})^{\frac{d}{2}+\eta-1}$ and integrating over $\mathbb{R}^d$ to get
		\begin{equation}\label{su7}
			\begin{split}
				&\frac{\mathrm{d}}{\mathrm{d}t}\int_{\mathbb{R}^d}(\rho^{m+1})^{\frac{d}{2}+\eta}\, \mathrm{d}x+\frac{4(\frac{d}{2}+\eta-1)}{\frac{d}{2}+\eta}\int_{\mathbb{R}^d}|\nabla(\rho^{m+1})^{\frac{d}{4}+\frac{\eta}{2}}|^2\, \mathrm{d}x\\
				=&-\chi(\frac{d}{2}+\eta-1)\int_{\mathbb{R}^d}\Delta c^{m}(\rho^{m+1})^{\frac{d}{2}+\eta}\, \mathrm{d}x\\
				=&\chi(\frac{d}{2}+\eta-1)\int_{\mathbb{R}^d}\rho^m(\rho^{m+1})^{\frac{d}{2}+\eta}\, \mathrm{d}x-\chi(\frac{d}{2}+\eta-1)\int_{\mathbb{R}^d}f(\rho^{m+1})^{\frac{d}{2}+\eta}\, \mathrm{d}x\\
				:=&I_{1}+I_{2}.
			\end{split}
		\end{equation}
		First, we estimate $I_{1}$. We have
		\begin{equation*}
			\begin{split}
				I_{1}\leq&\chi(\frac{d}{2}+\eta-1)\|(\rho^{m+1})^{\frac{d}{4}+\frac{\eta}{2}}\|^2_{L^{\frac{d+2\eta}{\frac{d}{2}+\eta-1}}}\|\rho^m\|_{L^{\frac{d}{2}+\eta}}\\
				\leq&\chi(\frac{d}{2}+\eta-1)\|(\rho^{m+1})^{\frac{d}{4}+\frac{\eta}{2}}\|_{L^{2}}^{2-(\frac{d}{\frac{d}{2}+\eta})}\|\nabla(\rho^{m+1})^{\frac{d}{4}+\frac{\eta}{2}}\|_{L^{2}}^{\frac{d}{\frac{d}{2}+\eta}}\|\rho^m\|_{L^{\frac{d}{2}+\eta}}\\
				\leq&\varepsilon_{1}\|\nabla(\rho^{m+1})^{\frac{d}{4}+\frac{\eta}{2}}\|_{L^2}^2+C_{\varepsilon_{1}}\big(\chi(\frac{d}{2}+\eta-1)\big)^{\frac{d+2\eta}{2\eta}}\|(\rho^{m+1})^{\frac{d}{4}+\frac{\eta}{2}}\|_{L^2}^2\|\rho^m\|_{L^{\frac{d}{2}+\eta}}^{\frac{d+2\eta}{2\eta}}.
			\end{split}
		\end{equation*}
		Then we estimate $I_{2}$. We have
		\begin{equation*}
			I_{2}\leq\chi(\frac{d}{2}+\eta-1)\|\rho^{m+1}\|_{L^{\frac{d}{2}+\eta}}^{\frac{d}{2}+\eta}\|f\|_{L^{\infty}}.
		\end{equation*}
		Taking $\varepsilon_{1}=\frac{(\frac{d}{2}+\eta-1)}{\frac{d}{2}+\eta}$, putting the estimates of $I_{1}$ and $I_{2}$ into (\ref{su7}), we have
		\begin{equation*}
			\begin{split}
				&\frac{\mathrm{d}}{\mathrm{d}t}\|\rho^{m+1}\|_{L^{\frac{d}{2}+\eta}}^{\frac{d}{2}+\eta}+\frac{3(\frac{d}{2}+\eta-1)}{\frac{d}{2}+\eta}\int_{\mathbb{R}^d}|\nabla(\rho^{m+1})^{\frac{d}{4}+\frac{\eta}{2}}|^2\, \mathrm{d}x\\
				\leq&C_{\varepsilon_{1}}\big(\chi(\frac{d}{2}+\eta-1)\big)^{\frac{d+2\eta}{2\eta}}\|(\rho^{m+1})^{\frac{d}{4}+\frac{\eta}{2}}\|_{L^2}^2\|\rho^m\|_{L^{\frac{d}{2}+\eta}}^{\frac{d+2\eta}{2\eta}}+\chi(\frac{d}{2}+\eta-1)\|\rho^{m+1}\|_{L^{\frac{d}{2}+\eta}}^{\frac{d}{2}+\eta}\|f\|_{L^{\infty}}.
			\end{split}
		\end{equation*}
		Therefore Gronwall's inequality implies that: if $T$ is chosen such that 
		$$
		T\leq T_{3}:=\min\Big\{\Big(\ln\frac{3}{2}\big(C_{\varepsilon_{1}}\big(\chi(\frac{d}{2}+\eta-1)\big)^{\frac{d+2\eta}{2\eta}}(2Q_{0})^{\frac{1}{\eta}}+\chi(\frac{d}{2}+\eta-1)Q^{\frac{1}{p}}\big)^{-1}\Big)^{\frac{p}{p-1}},\ 1\Big\},
		$$ 
		where $C_{\varepsilon_{1}}=\big(\frac{d+2\eta-2}{d}\big)^{-\frac{d}{2\eta}}\big(\frac{d+2\eta}{2\eta}\big)^{-1}$, then we have
		\begin{equation*}
		\begin{split}
		\|\rho^{m+1}\|_{L^{\frac{d}{2}+\eta}}^{\frac{d}{2}+\eta}\leq&Q_{0}\exp\Big\{\int_{0}^{T}\Big(C_{\varepsilon_{1}}\big(\chi(\frac{d}{2}+\eta-1)\big)^{\frac{d+2\eta}{2\eta}}\|\rho^{m}\|_{L^{\frac{d}{2}+\eta}}^{\frac{d+2\eta}{2\eta}}+\chi(\frac{d}{2}+\eta-1)\|f\|_{L^{\infty}}\Big)\, \mathrm{d}s\Big\}\\
		\leq&Q_{0}\exp\Big\{C_{\varepsilon_{1}}\big(\chi(\frac{d}{2}+\eta-1)\big)^{\frac{d+2\eta}{2\eta}}(2Q_{0})^{\frac{1}{\eta}}T+\chi(\frac{d}{2}+\eta-1)Q^{\frac{1}{p}}T^{1-\frac{1}{p}}\Big\}\\
		\leq&\frac{3}{2}Q_{0}.
		\end{split}
		\end{equation*}
		This implies also
		\begin{equation*}
			\begin{split}
				&\|\rho^{m+1}\|_{L^{\frac{d}{2}+\eta}}^{\frac{d}{2}+\eta}+\frac{3(\frac{d}{2}+\eta-1)}{\frac{d}{2}+\eta}\int_{0}^{T}\|\nabla(\rho^{m+1})^{\frac{d}{4}+\frac{\eta}{2}}\|_{L^2}^2\, \mathrm{d}t\\
				\leq&C_{\varepsilon_{1}}\big(\chi(\frac{d}{2}+\eta-1)\big)^{\frac{d+2\eta}{2\eta}}\|\rho^{m}\|_{L^{\frac{d}{2}+\eta}}^{\frac{d+2\eta}{2\eta}}\|(\rho^{m+1})^{\frac{d}{4}+\frac{\eta}{2}}\|_{L^2}^2T\\
				&\qquad +\chi(\frac{d}{2}+\eta-1)\|\rho^{m}\|_{L^{\frac{d}{2}+\eta}}^{\frac{d}{2}+\eta}\int_{0}^{T}\|f\|_{L^{\infty}}\, \mathrm{d}t+Q_{0}\\
				\leq&Q_{0}T^{1-\frac{1}{p}}\Big(C_{\varepsilon_{1}}\big(\chi(\frac{d}{2}+\eta-1)\big)^{\frac{d+2\eta}{2\eta}}(2Q_{0})^{\frac{1}{\eta}}\frac{3}{2}+\chi(\frac{d}{2}+\eta-1)\frac{3}{2}Q^{\frac{1}{p}}\Big)+Q_{0}\\
				\leq&2Q_{0},
			\end{split}
		\end{equation*}
		if $T\leq T_{4}:=\min\Big\{\Big(C_{\varepsilon_{1}}\big(\chi(\frac{d}{2}+\eta-1)\big)^{\frac{d+2\eta}{2\eta}}(2Q_{0})^{\frac{1}{\eta}}\frac{3}{2}+\chi(\frac{d}{2}+\eta-1)\frac{3}{2}Q^{\frac{1}{p}}\Big)^{-\frac{p}{p-1}}, 1\Big\}$. As a summary, by taking $T\leq\hat{T}_{2}:=\min\{T_{3}, T_{4}\}$, we have $\rho^{m+1}\in\mathbb{Y}$.\\
		
		{\it Step 2.} Prove $(\rho^{m})_{m\in\mathbb{N}}$ is a Cauchy sequence and the limit the unique weak solution of \eqref{16} in $L^{\infty}(0, \hat{T}^*; L^{\frac{d}{2}+\eta}(\mathbb{R}^d))$. 
		
		Actually, the difference $\rho^{m+1}-\rho^{m}$ satisfies
				\begin{equation}\label{su10}
			\partial_{t}(\rho^{m+1}-\rho^{m})=\Delta(\rho^{m+1}-\rho^{m})-\nabla\cdot\big((\rho^{m+1}-\rho^{m})\chi\nabla c^{m}\big)+\nabla\cdot\big(\rho^{m}\chi\nabla(c^{m-1}-c^{m})\big).
		\end{equation}
		By multiplying the equation (\ref{su10}) by $(\frac{d}{2}+\eta)(\rho^{m+1}-\rho^m)^{\frac{d}{2}+\eta-1}$ we get
		\begin{equation}\label{su11}
			\begin{split}
				&\frac{\mathrm{d}}{\mathrm{d}t}\int_{\mathbb{R}^d}(\rho^{m+1}-\rho^{m})^{\frac{d}{2}+\eta}\, \mathrm{d}x+\frac{4(\frac{d}{2}+\eta-1)}{\frac{d}{2}+\eta}\int_{\mathbb{R}^d} |\nabla(\rho^{m+1}-\rho^{m})^{\frac{d}{4}+\frac{\eta}{2}}|^2\, \mathrm{d}x\\
				=&-(\frac{d}{2}+\eta)\chi\int_{\mathbb{R}^d}\nabla\cdot\big((\rho^{m+1}-\rho^{m})\nabla c^m\big)(\rho^{m+1}-\rho^{m})^{\frac{d}{2}+\eta-1}\, \mathrm{d}x\\
				&+(\frac{d}{2}+\eta)\chi\int_{\mathbb{R}^d}\nabla\cdot\big(\rho^{m}\nabla(c^{m-1}-c^{m})\big)(\rho^{m+1}-\rho^{m})^{\frac{d}{2}+\eta-1}\, \mathrm{d}x\\
				:=&J_{1}+J_{2}.
			\end{split}
		\end{equation}
		By integration by parts, Gagliardo-Nirenberg's inequality, H\"older's inequality and Cauchy's inequality, we have
		\begin{equation}{\label{suJ1}}
			\begin{split}
			J_{1}=&-(\frac{d}{2}+\eta-1)\chi\int_{\mathbb{R}^d}\Delta c^{m}(\rho^{m+1}-\rho^m)^{\frac{d}{2}+\eta}\, \mathrm{d}x\\
			=&(\frac{d}{2}+\eta-1)\chi\int_{\mathbb{R}^d}\rho^{m}(\rho^{m+1}-\rho^{m})^{\frac{d}{2}+\eta}\, \mathrm{d}x-(\frac{d}{2}+\eta-1)\chi\int_{\mathbb{R}^d} f(\rho^{m+1}-\rho^{m})^{\frac{d}{2}+\eta}\, \mathrm{d}x\\
			\leq&(\frac{d}{2}+\eta-1)\chi\|\rho^{m}\|_{L^{\frac{d}{2}+\eta}}\|(\rho^{m+1}-\rho^{m})^{\frac{d}{4}+\frac{\eta}{2}}\|_{L^{\frac{2d+4\eta}{d+2\eta-2}}}^2\\
			&+(\frac{d}{2}+\eta-1)\chi\|f\|_{L^{\infty}}\|\rho^{m+1}-\rho^{m}\|_{L^{\frac{d}{2}+\eta}}^{\frac{d}{2}+\eta}\\
			\leq&C\chi(\frac{d}{2}+\eta-1)\|\rho^{m}\|_{L^{\frac{d}{2}+\eta}}\|(\rho^{m+1}-\rho^m)^{\frac{d}{4}+\frac{\eta}{2}}\|_{L^2}^{\frac{4\eta}{d+2\eta}}\|\nabla(\rho^{m+1}-\rho^{m})^{\frac{d}{4}+\frac{\eta}{2}}\|_{L^2}^{\frac{2d}{d+2\eta}}\\
			&+\chi(\frac{d}{2}+\eta-1)\|f\|_{L^{\infty}}\|\rho^{m+1}-\rho^{m}\|_{L^{\frac{d}{2}+\eta}}^{\frac{d}{2}+\eta}\\
			\leq&\varepsilon_{1}\|\nabla(\rho^{m+1}-\rho^m)^{\frac{d}{4}+\frac{\eta}{2}}\|_{L^2}^2+C_{\varepsilon_{1}}\big(\chi(\frac{d}{2}+\eta-1)\big)^{\frac{d+2\eta}{2\eta}}\|\rho^{m}\|_{L^{\frac{d}{2}+\eta}}^{\frac{d+2\eta}{2\eta}}\|(\rho^{m+1}-\rho^{m})^{\frac{d}{4}+\frac{\eta}{2}}\|_{L^2}^2\\
			&+(\frac{d}{2}+\eta-1)\chi\|f\|_{L^{\infty}}\|\rho^{m+1}-\rho^{m}\|_{L^{\frac{d}{2}+\eta}}^{\frac{d}{2}+\eta}
		         \end{split}
		\end{equation}
		and
		\begin{equation}{\label{suJ2}}
			\begin{split}
			J_{2}=&(\frac{d}{2}+\eta)\chi\int_{\mathbb{R}^d}\nabla\cdot\big(\rho^{m}\nabla(c^{m-1}-c^{m})\big)\big((\rho^{m+1}-\rho^{m})^{\frac{d}{4}+\frac{\eta}{2}}\big)^{\frac{2(d+2\eta-2)}{d+2\eta}}\, \mathrm{d}x\\
			=&-C_{0}\chi\int_{\mathbb{R}^d}\rho^{m}\nabla(c^{m-1}-c^{m})\big((\rho^{m+1}-\rho^m)^{\frac{d}{4}+\frac{\eta}{2}}\big)^{\frac{d+2\eta-4}{d+2\eta}}\nabla(\rho^{m+1}-\rho^m)^{\frac{d}{4}+\frac{\eta}{2}}\, \mathrm{d}x\\
			\leq&C_{0}\|\rho^{m}\|_{L^d}\|\nabla(c^{m-1}-c^{m})\|_{L^{\frac{d(d+2\eta)}{d-2\eta}}}\|\big((\rho^{m+1}-\rho^{m})^{\frac{d}{4}+\frac{\eta}{2}}\big)^{\frac{d+2\eta-4}{d+2\eta}}\|_{L^{\frac{2(d+2\eta)}{d+2\eta-4}}}\\
			&\qquad\times\|\nabla(\rho^{m+1}-\rho^{m})^{\frac{d}{4}+\frac{\eta}{2}}\|_{L^2}\\
			\leq&\varepsilon_{1}\|\nabla(\rho^{m+1}-\rho^{m})^{\frac{d}{4}+\frac{\eta}{2}}\|_{L^2}^2+C_{\varepsilon_{1}}C_{0}^2\|(\rho^{m})^{\frac{d}{4}+\frac{\eta}{2}}\|_{L^{\frac{4d}{d+2\eta}}}^{\frac{8}{d+2\eta}}\|\rho^{m-1}-\rho^{m}\|_{L^{\frac{d}{2}+\eta}}^2\\
			&\qquad\times\|\rho^{m+1}-\rho^{m}\|_{L^{\frac{d}{2}+\eta}}^{\frac{d+2\eta-4}{2}}\\
			\leq&C_{\varepsilon_{1}}C_{0}^2\|(\rho^{m})^{\frac{d}{4}+\frac{\eta}{2}}\|_{L^2}^{\frac{2(4-d+2\eta)}{d+2\eta}}\|\nabla(\rho^m)^{\frac{d}{4}+\frac{\eta}{2}}\|_{L^2}^{\frac{2(d-2\eta)}{d+2\eta}}\|\rho^{m}-\rho^{m-1}\|_{L^{\frac{d}{2}+\eta}}^2\|\rho^{m+1}-\rho^m\|_{L^{\frac{d}{2}+\eta}}^{\frac{d+2\eta-4}{2}}\\
			&+\varepsilon_{1}\|\nabla(\rho^{m+1}-\rho^m)^{\frac{d}{4}+\frac{\eta}{2}}\|_{L^2}^2\\
			\leq&C_{\varepsilon_{2}}(C_{\varepsilon_{1}}C_{0}^2)^{\frac{d+2\eta}{4}}\|(\rho^{m})^{\frac{d}{4}+\frac{\eta}{2}}\|_{L^2}^{\frac{4-d+2\eta}{2}}\|\nabla(\rho^m)^{\frac{d}{4}+\frac{\eta}{2}}\|_{L^2}^{\frac{d-2\eta}{2}}\|\rho^m-\rho^{m-1}\|_{L^{\frac{d}{2}+\eta}}^{\frac{d+2\eta}{2}}\\
			&+\varepsilon_{1}\|\nabla(\rho^{m+1}-\rho^m)^{\frac{d}{4}+\frac{\eta}{2}}\|_{L^2}^2+\varepsilon_{2}\|\rho^{m+1}-\rho^{m}\|_{L^{\frac{d}{2}+\eta}}^{\frac{d}{2}+\eta},
			\end{split}
		\end{equation}
where $C_{0}=\chi(d+2\eta-2)$. Putting (\ref{suJ1}) and (\ref{suJ2}) into (\ref{su11}), choosing $\varepsilon_{1}=\frac{\frac{d}{2}+\eta-1}{2(\frac{d}{2}+\eta)}, \varepsilon_{2}=\frac{1}{2}$, we obtain
\begin{align}\label{su13}
	\begin{split}
		&\frac{\mathrm{d}}{\mathrm{d}t}\int_{\mathbb{R}^d} (\rho^{m+1}-\rho^{m})^{\frac{d}{2}+\eta}\, \mathrm{d}x+\frac{3(\frac{d}{2}+\eta-1)}{\frac{d}{2}+\eta}\int_{\mathbb{R}^d} |\nabla(\rho^{m+1}-\rho^{m})^{\frac{d}{4}+\frac{\eta}{2}}|^2\, \mathrm{d}x\\
		\leq&\mathcal{M}\|\rho^{m+1}-\rho^m\|_{L^{\frac{d}{2}+\eta}}^{\frac{d}{2}+\eta}+\mathcal{N}\|\rho^m-\rho^{m-1}\|_{L^{\frac{d}{2}+\eta}}^{\frac{d}{2}+\eta},
	\end{split}
\end{align} 
where 
\begin{align*}
	&\mathcal{M}=1+C_{\varepsilon_{1}}(\chi(\frac{d}{2}+\eta-1))^{\frac{d+2\eta}{2\eta}}\|\rho^m\|_{L^{\frac{d}{2}+\eta}}^{\frac{d+2\eta}{2\eta}}+(\frac{d}{2}+\eta-1)\chi\|f\|_{L^{\infty}},
	\\
	&\mathcal{N}=(C_{\varepsilon_{1}}(\chi(d+2\eta-2))^2)^{\frac{d+2\eta}{4}}\|(\rho^{m})^{\frac{d}{4}+\frac{\eta}{2}}\|_{L^2}^{\frac{4-d+2\eta}{2}}\|\nabla(\rho^m)^{\frac{d}{4}+\frac{\eta}{2}}\|_{L^2}^{\frac{d-2\eta}{2}}.
\end{align*}
We get the following inequality by Gronwall's inequality
\begin{align}\label{su14}
	\begin{split}
		\|\rho^{m+1}-\rho^m\|_{L^{\frac{d}{2}+\eta}}^{\frac{d}{2}+\eta}
		\leq&  e^{\int_0^T\mathcal{M}\mathrm{d}t}\int_0^T\mathcal{N}\|\rho^m-\rho^{m-1}\|_{L^{\frac{d}{2}+\eta}}^{\frac{d}{2}+\eta}\, \mathrm{d}t\\
		\leq&\Big(e^{\int_0^T\mathcal{M}\mathrm{d}t}\int_0^T\mathcal{N}\mathrm{d}t\Big)\|\rho^m-\rho^{m-1}\|_{L^{\frac{d}{2}+\eta}}^{\frac{d}{2}+\eta}.
	\end{split}
\end{align}
Now integrating (\ref{su13}) in $t$ and combining (\ref{su14}), we have
\begin{align}\label{su18}
	\begin{split}
		& \|\rho^{m+1}-\rho^m\|_{L^{\frac{d}{2}+\eta}}^{\frac{d}{2}+\eta}+\frac{3(\frac{d}{2}+\eta-1)}{\frac{d}{2}+\eta}\int_0^T\|\nabla(\rho^{m+1}-\rho^m)^{\frac{d}{4}+\frac{\eta}{2}}\|_{L^2}^2\, \mathrm{d}t  \\
		\leq&\int_0^T \mathcal{M}\|\rho^{m+1}-\rho^m\|_{L^{\frac{d}{2}+\eta}}^{\frac{d}{2}+\eta}\, \mathrm{d}t+\int_0^T \mathcal{N}\|\rho^m-\rho^{m-1}\|_{L^{\frac{d}{2}+\eta}}^{\frac{d}{2}+\eta}\, \mathrm{d}t\\
		\leq&\mathcal{N}T\Big(1+\mathcal{M}Te^{\int_{0}^{T}\mathcal{M}\, \mathrm{d}t}\Big)\|\rho^m-\rho^{m-1}\|_{L^{\frac{d}{2}+\eta}}^{\frac{d}{2}+\eta}\\
		\leq&\mathcal{N}T\Big(1+\mathcal{M}\exp\Big\{{1+C_{\varepsilon_{1}}\big(\chi(\frac{d}{2}+\eta-1)\big)^{\frac{d+2\eta}{2\eta}}(2Q_{0})^{\frac{1}{\eta}}+(\frac{d}{2}+\eta-1)\chi Q^{\frac{1}{p}}}\Big\}\Big)\\
		&\qquad\times\|\rho^m-\rho^{m-1}\|_{L^{\frac{d}{2}+\eta}}^{\frac{d}{2}+\eta}\\
		:=&\mathcal{S}T\|\rho^m-\rho^{m-1}\|_{L^{\frac{d}{2}+\eta}}^{\frac{d}{2}+\eta}.
		\end{split}
\end{align}
By  choosing $T<\hat{T}_{3}:=\mathcal{S}^{-1}$, we have
\begin{align}\label{su19}
	\|\rho^{m+1}-\rho^m\|_{B}\leq C\|\rho^{m}-\rho^{m-1}\|_{B}
\end{align} 
where $0<C<1$, which implies that $(\rho^m)_{m\in\mathbb{N}}$ is a Cauchy sequence. Let $\hat{T}^*=\min\{\hat{T}_1, \hat{T}_2, \hat{T}_3\}$, since the space $\mathbb{Y}$ is complete, this sequence has a limit $\rho\in L^{\infty}(0, \hat{T}^*; L^{\frac{d}{2}+\eta}(\mathbb{R}^d))$. The by taking the limit $m\to\infty$ in \eqref{su2}, we can obtain that $\rho$ is a weak solution.
The uniqueness of the solution can be easily obtained by proceeding the same estimate as in \eqref{su19}.
\end{proof}

In the next, we prove that, within short time the above obtained local solution keeps the boundedness of $L^{\frac{d}{2}}$ norm, which is the critical case. This fact plays also important role later for the bootstrap argument to extend the solution to arbitrary given time.

\begin{lemma}\label{b1}
Assume that the initial data $\rho_{0}$ satisfy the condition (\ref{b201}), then there exist a time $0<T^*<\hat{T}^*$ such that
\begin{equation*}
\rho\in L^{\infty}(0, T^*; L^{\frac{d}{2}}({\mathbb{R}^d}))
\end{equation*}
and
\begin{equation*}
\sup_{0\leq t\leq T^*}\|\rho\|_{L^{\frac{d}{2}}}^{\frac{d}{2}}+\int_{0}^{T^*}\|\nabla\rho^{\frac{d}{4}}\|_{L^2}^2\, \mathrm{d}t\leq 2C_{0}.
\end{equation*}
\end{lemma}

\begin{proof}
Multiplying the equation (\ref{1}) by $\frac{d}{2}\rho^{\frac{d}{2}-1}$ and integrating over $\mathbb{R}^d$, together with the estimates in Lemma \ref{suPDE3d}, we obtain
	\begin{equation}\label{12}
		\begin{split}
			&\frac{\mathrm{d}}{\mathrm{d}t}\int_{\mathbb{R}^d}\rho^{\frac{d}{2}}\,  \mathrm{d}x+4(1-\frac{2}{d})\int_{\mathbb{R}^d} |\nabla\rho^{\frac{d}{4}}|^2\,  \mathrm{d}x\\
			=&\chi(\frac{d}{2}-1)\int_{\mathbb{R}^d} (\rho-f)\rho^{\frac{d}{2}}\,  \mathrm{d}x\\
			\leq&\chi(\frac{d}{2}-1)\|\rho\|_{L^{\frac{d}{2}+\eta}}\|\rho^{\frac{d}{4}}\|_{L^{\frac{2(d+2\eta)}{d+2\eta-2}}}^2+\chi(\frac{d}{2}-1)\|f\|_{L^{\infty}}\|\rho^{\frac{d}{2}}\|_{L^1}\\
			\leq&\chi(\frac{d}{2}-1)\|\rho\|_{L^{\frac{d}{2}+\eta}}\|\rho^{\frac{d}{4}}\|_{L^2}^{\frac{4\eta}{d+2\eta}}\|\nabla\rho^{\frac{d}{4}}\|_{L^2}^{\frac{2d}{d+2\eta}}+\chi(\frac{d}{2}-1)\|f\|_{L^{\infty}}\|\rho\|_{L^{\frac{d}{2}}}^{\frac{d}{2}}\\
			\leq&\varepsilon_{1}\|\nabla\rho^{\frac{d}{4}}\|_{L^2}^2+C_{\varepsilon_{1}}\big(\chi(\frac{d}{2}-1)\|\rho\|_{L^{\frac{d}{2}+\eta}}\|\rho^{\frac{d}{4}}\|_{L^2}^{\frac{4\eta}{d+2\eta}}\big)^{\frac{d+2\eta}{2\eta}}+\chi(\frac{d}{2}-1)\|f\|_{L^{\infty}}\|\rho\|_{L^{\frac{d}{2}}}^{\frac{d}{2}}\\
			\leq&\varepsilon_{1}\|\nabla\rho^{\frac{d}{4}}\|_{L^2}^2+C_{\varepsilon_{1}}\big(\chi(\frac{d}{2}-1)\big)^{\frac{d+2\eta}{2\eta}}(2Q_{0})^{\frac{1}{\eta}}\|\rho\|_{L^{\frac{d}{2}}}^{\frac{d}{2}}+\chi(\frac{d}{2}-1)\|f\|_{L^{\infty}}\|\rho\|_{L^{\frac{d}{2}}}^{\frac{d}{2}}.
			\end{split}
	\end{equation}
Let $\varepsilon_{1}=(1-\frac{2}{d})$, using the Gronwall's inequality to the above inequality, we have
\begin{equation*}
\|\rho\|_{L^{\frac{d}{2}}}^{\frac{d}{2}}\leq C_{0}\exp\Big\{\int_{0}^{T}\Big(C_{\varepsilon_{1}}\big(\chi(\frac{d}{2}-1)\big)^{\frac{d+2\eta}{2\eta}}(2Q_{0})^{\frac{1}{\eta}}+\chi(\frac{d}{2}-1)\|f\|_{L^{\infty}}\Big)\, \mathrm{d}t\Big\}\leq\frac{3}{2}C_{0},
\end{equation*}
if $T\leq T_{5}:=\Big(\ln \frac{3}{2}\big(C_{\varepsilon_{1}}(\chi(\frac{d}{2}-1))^{\frac{d+2\eta}{2\eta}}(2Q_{0})^{\frac{1}{\eta}}+C\chi(\frac{d}{2}-1)Q^{\frac{1}{p}}\big)^{-1}\Big)^{\frac{p}{p-1}}$. Then the inequality (\ref{12}) gives that
\begin{equation*}
\begin{split}
&\|\rho\|_{L^{\frac{d}{2}}}^{\frac{d}{2}}+\frac{3(d-2)}{d}\int_{0}^{T}\|\nabla\rho^{\frac{d}{4}}\|_{L^2}^2\, \mathrm{d}x\\
\leq&\Big(C_{\varepsilon_{1}}\big(\chi(\frac{d}{2}-1)\big)^{\frac{d+2\eta}{2\eta}}(2Q_{0})^{\frac{1}{\eta}}+\chi(\frac{d}{2}-1)Q^{\frac{1}{p}}\Big)\frac{3}{2}C_{0}T^{\frac{p-1}{p}}+C_{0}\\
\leq&2C_{0},
\end{split}
\end{equation*}
if $T\leq T_{6}:=\Big(\frac{2}{3}\big(C_{\varepsilon_{1}}(\chi(\frac{d}{2}-1))^{\frac{d+2\eta}{2\eta}}(2Q_{0})^{\frac{1}{\eta}}+\chi(\frac{d}{2}-1)Q^{\frac{1}{p}}\big)^{-1}\Big)^{\frac{p}{p-1}}$. Then the lemma is proved by taking  $T^*\leq\min\{T_{5}, T_{6}, \hat{T}^*\}$.
\end{proof}

\begin{proof}[The proof of Proposition \ref{PDEest}]
A combination of the results in Lemma \ref{suPDE3d} and \ref{b1} implies the statement of Proposition \ref{PDEest}.
\end{proof}

In the remaining part of this subsection, we use the bootstrap argument to extend the local solution to global solution.

\begin{proposition}\label{a}
Assume the initial data $\rho_{0}$ satisfy the condition (\ref{b201}) and $C_{0}\leq\Theta$, where $\Theta$ is given in \eqref{theta}, then we have
	$$
	\rho\in  L^{\infty}(0, \infty; L^{\frac{d}{2}}(\mathbb{R}^d)\cap L^{\frac{d}{2}+\eta}(\mathbb{R}^d)),
	$$
where  $\eta>0$ is given in Proposition \ref{PDEest}.
\end{proposition}

The following lemma is the key point to finish the proof of Proposition \ref{a}.

\begin{lemma}\label{CLJ}
Assume the initial data satisfy (\ref{b201}). Then there exist a positive constant $\Theta$ such that if the solution of (\ref{1}) on $[0, T]\times\mathbb{R}^d$ satisfy
\begin{equation}\label{15}
		\sup_{0\leq t\leq T}\|\rho\|_{L^{\frac{d}{2}}(\mathbb{R}^d)}^{\frac{d}{2}}
		\leq 2C_{0}^{1/2},\mbox{ for } C_{0}\leq\Theta,
\end{equation}
then the following estimates hold
\begin{equation*}
\sup_{0\leq t\leq T}\|\rho\|_{L^{\frac{d}{2}+\eta}(\mathbb{R}^d)}^{\frac{d}{2}+\eta}\leq\frac{3}{2}Q_{0},\quad\sup_{0\leq t\leq T}\|\rho\|_{L^{\frac{d}{2}}(\mathbb{R}^d)}^{\frac{d}{2}}\leq C_{0}^{1/2}.
\end{equation*}
\end{lemma}

\begin{proof}
Multiplying the equation (\ref{1}) by $(\frac{d}{2}+\eta)\rho^{\frac{d}{2}+\eta-1}$ and integrating over $\mathbb{R}^d$ to get
	\begin{equation*}
	\begin{split}
	&\frac{\mathrm{d}}{\mathrm{d}t}\int_{\mathbb{R}^d}\rho^{\frac{d}{2}+\eta}\, \mathrm{d}x+\frac{4(\frac{d}{2}+\eta-1)}{\frac{d}{2}+\eta}\int_{\mathbb{R}^d}|\nabla\rho^{\frac{d}{4}+\frac{\eta}{2}}|^2\, \mathrm{d}x\\
	=&(\frac{d}{2}+\eta-1)\chi\int_{\mathbb{R}^d}\rho^{\frac{d}{2}+\eta}\rho\, \mathrm{d}x-(\frac{d}{2}+\eta-1)\chi\int_{\mathbb{R}^d}\rho^{\frac{d}{2}+\eta}f\, \mathrm{d}x\\
	\leq&(\frac{d}{2}+\eta-1)\chi\|\rho\|_{L^{\frac{d}{2}}}\|\nabla\rho^{\frac{d}{4}+\frac{\eta}{2}}\|_{L^2}^2+(\frac{d}{2}+\eta-1)\chi\|\rho^{\frac{d}{4}+\frac{\eta}{2}}\|_{L^{\frac{2dr}{dr-2r+2+2\eta}}}^{2}\|f\|_{L^{\frac{dr}{2r-2-2\eta}}}\\
	\leq&(\frac{d}{2}+\eta-1)\chi\|\rho\|_{L^{\frac{d}{2}}}\|\nabla\rho^{\frac{d}{4}+\frac{\eta}{2}}\|_{L^2}^2+(\frac{d}{2}+\eta-1)\chi\|\rho^{\frac{d}{4}+\frac{\eta}{2}}\|_{L^{\frac{2d}{d+2\eta}}}^{\frac{2}{r}}\|\nabla\rho^{\frac{d}{4}+\frac{\eta}{2}}\|_{L^2}^{2(1-\frac{1}{r})}\|f\|_{L^{\frac{dr}{2r-2-2\eta}}}\\
	\leq&(\frac{d}{2}+\eta-1)\chi\|\rho\|_{L^{\frac{d}{2}}}\|\nabla\rho^{\frac{d}{4}+\frac{\eta}{2}}\|_{L^2}^2+\varepsilon_{1}\|\nabla\rho^{\frac{d}{4}+\frac{\eta}{2}}\|_{L^2}^2\\
	&+C_{\varepsilon_{1}}\big(\chi(\frac{d}{2}+\eta-1)\big)^{r}\|f\|_{L^{\frac{dr}{2r-2-2\eta}}}^{r}\|\rho^{\frac{d}{4}+\frac{\eta}{2}}\|_{L^{\frac{2d}{d+2\eta}}}^2\\
	\leq&(\frac{d}{2}+\eta-1)\chi C_0^{\frac{2}{d}}\|\nabla\rho^{\frac{d}{4}+\frac{\eta}{2}}\|_{L^2}^2+\varepsilon_{1}\|\nabla\rho^{\frac{d}{4}+\frac{\eta}{2}}\|_{L^2}^2\\
	&+C_{\varepsilon_{1}}\big(\chi(\frac{d}{2}+\eta-1)\big)^{r}\|f\|_{L^{\frac{dr}{2r-2-2\eta}}}^{r}\|\rho^{\frac{d}{4}+\frac{\eta}{2}}\|_{L^{\frac{2d}{d+2\eta}}}^2.
	\end{split}
	\end{equation*}
If 
\begin{equation}
\label{theta1}
C_{0}\leq\Theta_{1}:=\Big(\frac{1}{2}\big(\frac{\frac{d}{2}+\eta-1}{d+2\eta}(\chi(\frac{d}{2}+\eta-1))^{-1}\big)^{\frac{d}{2}}\Big)^2,
\end{equation}
and $\varepsilon_{1}=\frac{\frac{d}{2}+\eta-1}{d+2\eta}$, we have
\begin{equation*}
\frac{\mathrm{d}}{\mathrm{d}t}\|\rho\|_{L^{\frac{d}{2}+\eta}}^{\frac{d}{2}+\eta}\leq C_{\varepsilon_{1}}\big(\chi(\frac{d}{2}+\eta-1)\big)^{r}\|\rho\|_{L^{\frac{d}{2}}}^{\frac{d+2\eta}{2}}\|f\|_{L^{\frac{dr}{2r-2-2\eta}}}^r.
\end{equation*}
Using the Gronwall's inequality to the above inequality, we have
\begin{equation}\label{bc1999}
\|\rho\|_{L^{\frac{d}{2}+\eta}}^{\frac{d}{2}+\eta}\leq Q_{0}+C_{\varepsilon_{1}}\big(\chi(\frac{d}{2}+\eta-1)\big)^rQ(2C_{0}^{\frac{1}{2}})^{\frac{d+2\eta}{d}}\leq\frac{3}{2}Q_{0},
\end{equation}
where we need to take
\begin{equation}
\label{theta2}
C_{0}\leq\Theta_{2}:=\Big(\frac{1}{2}\big(\frac{Q_{0}}{2}(C_{\varepsilon_{1}}(\chi(\frac{d}{2}+\eta-1))^rQ)^{-1}\big)^{\frac{d}{d+2\eta}}\Big)^2.
\end{equation}
Multiplying the equation (\ref{1}) by $\frac{d}{2}\rho^{\frac{d}{2}-1}$ and integrating over $\mathbb{R}^d$, we get
	\begin{equation}\label{s40}
		\frac{\mathrm{d}}{\mathrm{d}t}\int_{\mathbb{R}^d} \rho^{\frac{d}{2}}\,  \mathrm{d}x+4(1-\frac{2}{d})\int_{\mathbb{R}^d} |\nabla\rho^{\frac{d}{4}}|^2\,  \mathrm{d}x=(\frac{d}{2}-1)\chi\int_{\mathbb{R}^d} \rho^{\frac{d}{2}}(\rho-f)\,  \mathrm{d}x.
	\end{equation}
	By Gagliardo-Nirenberg's inequality, H\"older's inequality and Cauchy's inequality, we have
	\begin{equation*}
		(\frac{d}{2}-1)\chi\int_{\mathbb{R}^d} \rho^{\frac{d}{2}+1}\,  \mathrm{d}x\leq (\frac{d}{2}-1)\chi\|\rho\|_{L^{\frac{d}{2}}}\|\nabla\rho^{\frac{d}{4}}\|_{L^2}^2\leq (\frac{d}{2}-1)\chi C_0^{\frac{2}{d}}\|\nabla\rho^{\frac{d}{4}}\|_{L^2}^2
	\end{equation*}
	and
	\begin{equation*}
		\begin{split}
			(\frac{d}{2}-1)\chi\int_{\mathbb{R}^d}\rho^{\frac{d}{2}}f\,  \mathrm{d}x\leq&(\frac{d}{2}-1)\chi\|\rho^{\frac{d}{2}}\|_{L^{\frac{dr}{(d-2)(r-1)+d}}}\|f\|_{L^{\frac{dr}{2(r-1)}}}\\
			\leq& (\frac{d}{2}-1)\chi\|\rho^{\frac{d}{4}}\|_{L^2}^{\frac{2}{r}}\|\nabla\rho^{\frac{d}{4}}\|_{L^2}^{\frac{2(r-1)}{r}}\|f\|_{L^{\frac{dr}{2(r-1)}}}\\
			\leq&(1-\frac{2}{d})\|\nabla\rho^{\frac{d}{4}}\|_{L^2}^2+C(d, r)\|\rho^{\frac{d}{4}}\|_{L^2}^2\|f\|_{L^{\frac{dr}{2(r-1)}}}^{r},
		\end{split}
	\end{equation*}
	where \begin{equation}\label{C0}
		C(d,r)=\Big(\big(1-\frac{2}{d}\big)\frac{r}{r-1}\Big)^{-(r-1)}r^{-1}\big((\frac{d}{2}-1)\chi\big)^r.
	\end{equation}
	Therefore, by taking
	\begin{equation}
	\label{theta3}C_{0}\leq\Theta_{3}:=\big(\frac{1}{2}(\frac{4(d-2)}{d(d-2)\chi})^{\frac{d}{2}}\big)^2, 
	\end{equation} we have
	\begin{equation*}
		\frac{\mathrm{d}}{\mathrm{d}t}\|\rho\|_{L^{\frac{d}{2}}}^{\frac{d}{2}}\leq C(d,r)\|\rho\|_{L^{\frac{d}{2}}}^{\frac{d}{2}}\|f\|_{L^{\frac{dr}{2(r-1)}}}^{r}.
	\end{equation*}
Using the Gronwall's inequality to the above inequality, we have 
	\begin{eqnarray*}
		\sup_{0\leq t\leq T}\|\rho\|_{L^{\frac{d}{2}}}^{\frac{d}{2}} &\leq& \|\rho_0\|_{L^{\frac{d}{2}}}^{\frac{d}{2}} \exp\Big\{C(d, r)\int^T_0 \|f\|_{L^{\frac{dr}{2(r-1)}}}^{r} \mathrm{d}t\Big\}\\
		&\leq & C_0 \exp\Big\{C(d,r)\int^T_0 \|f\|_{L^1\cap W^{1,q}}^{r}\mathrm{d}t\Big\}\leq C_0\exp\Big\{C(d,r)\|l\|^r_{L^r(0,T)}\Big\}.
	\end{eqnarray*}
	Now by choosing 
	\begin{equation}
	\label{theta4}C_0\leq\Theta_{4}:=\exp\{-2C(d, r)\|l\|_{L^r(0, T)}^r\},
	\end{equation} 
	we obtain that 
	\begin{equation}\label{bc2000}
	\sup_{0\leq t\leq T}\|\rho\|_{L^{\frac{d}{2}}}^{\frac{d}{2}}\leq C_0^{1/2}.
	\end{equation}
As a summary, by taking 
\begin{equation}
\label{theta}C_0\leq \Theta:=\min\{\Theta_{1}, \Theta_{2}, \Theta_{3},\Theta_{4}\}
\end{equation}
where $\Theta_i$ are given in \eqref{theta1},\eqref{theta2},\eqref{theta3}, and\eqref{theta4}, the statement of the lemma follows directly from
a combination of (\ref{bc1999}) and (\ref{bc2000}).
\end{proof}

\begin{proof}[Proof of Proposition \ref{a}]
Let
\begin{equation}\label{assum}
\widetilde{T}=\sup\big\{T|(\ref{15}) \mbox{ holds }\big\}.
\end{equation}
As $\|\rho_{0}\|_{L^{\frac{d}{2}}}^{\frac{d}{2}}=C_{0}\leq C_{0}^{\frac{1}{2}}$ and $\rho_{0}\in L^{\frac{d}{2}+\eta}$, then using the Proposition \ref{PDEest}, we know that there exists $T^*>0$ such that (\ref{15}) holds for $T=T^*$. That is $\widetilde{T}\geq T^*>0$. Then we prove $\widetilde{T}=\infty$. Otherwise, $\widetilde{T}<\infty$. Then by Lemma \ref{CLJ}, we know that
\begin{equation*}
\sup_{0\leq t\leq \widetilde{T}}\|\rho\|_{L^{\frac{d}{2}}}^{\frac{d}{2}}\leq C_{0}^{\frac{1}{2}},\ \sup_{0\leq t\leq\widetilde{T}}\|\rho\|_{L^{\frac{d}{2}+\eta}}^{\frac{d}{2}+\eta}\leq\frac{3}{2}Q_{0}.
\end{equation*}
Then by Proposition \ref{PDEest} again, there exists $T^{**}>\widetilde{T}$ such that (\ref{15}) holds for $T=T^{**}$ which contradicts with (\ref{assum}). So $\widetilde{T}=\infty$. Therefore the proof of Proposition \ref{a} is finished.
\end{proof}

\subsection[Regularity]{Regularity of the weak solution to the system (\ref{1})}
In this subsection, for any given time $0<T<\infty$, we establish the further regularity of the weak solution to the system (\ref{1}) on $[0, T]\times\mathbb{R}^d$. 

\begin{proposition}\label{bca}
If the initial data satisfy (\ref{29}), then for any $T>0$, we have
\begin{equation}\label{b202}
\rho\in L^\infty(0, T; W^{1, q}(\mathbb{R}^d)).
\end{equation}
\end{proposition}

\begin{proof}
In order to get the $L^q$-norm of $\nabla\rho$, we need the $L^{2q}$-norm of $\rho$. We start with the estimate for $L^q$-norm of $\rho$. Multiplying the equation (\ref{1}) by $q\rho^{q-1}$ and integrating over $\mathbb{R}^d$, together with Proposition \ref{a} to get
\begin{equation*}
\begin{split}
&\frac{\mathrm{d}}{\mathrm{d}t}\int_{\mathbb{R}^d}\rho^{q}\, \mathrm{d}x+\frac{4(q-1)}{q}\int_{\mathbb{R}^d}|\nabla\rho^{\frac{q}{2}}|^2\, \mathrm{d}x\\
=&\chi(q-1)\int_{\mathbb{R}^d}\rho^{q+1}\, \mathrm{d}x-\chi(q-1)\int_{\mathbb{R}^d}\rho^{q}f\, \mathrm{d}x\\
\leq&\chi(q-1)\|\rho^{\frac{q}{2}}\|_{L^{\frac{2(d+2\eta)}{d+2\eta-2}}}^2\|\rho\|_{L^{\frac{d}{2}+\eta}}+\chi(q-1)\|\rho\|_{L^q}^q\|f\|_{L^{\infty}}\\
\leq&\chi(q-1)\|\rho^{\frac{q}{2}}\|_{L^2}^{\frac{4\eta}{d+2\eta}}\|\nabla\rho^{\frac{q}{2}}\|_{L^2}^{\frac{2d}{d+2\eta}}+\chi(q-1)\|\rho\|_{L^q}^q\|f\|_{L^{\infty}}\\
\leq&\varepsilon_{1}\|\nabla\rho^{\frac{q}{2}}\|_{L^2}^2+C_{\varepsilon_{1}}(\chi(q-1))^{\frac{d+2\eta}{2\eta}}\|\rho^{\frac{q}{2}}\|_{L^2}^2\|\rho\|_{L^{\frac{d}{2}+\eta}}^{\frac{d+2\eta}{2\eta}}+\chi(q-1)\|\rho\|_{L^q}^q\|f\|_{L^{\infty}}.
\end{split}
\end{equation*}
Let $\varepsilon_{1}=\frac{q-1}{q}$. Using the Gronwall's inequality to the above inequality, together with Proposition \ref{a} again, we have
\begin{equation}\label{bc1000}
\rho\in L^{\infty}(0, T; L^q(\mathbb{R}^d)).
\end{equation}
By the Duhamel's principle, we have
\begin{equation}\label{s30}
	\rho=G\ast\rho_{0}-\chi\int_{0}^{t}G\ast\nabla\cdot(\rho\nabla c)\, \mathrm{d}s,
\end{equation}
where $G$ is the heat kernel. By Young's inequality, Gagliardo-Nirenberg's inequality, Hardy Little-wood inequality and integration by parts, we have
\begin{equation*}
\begin{split}
\|\rho\|_{L^{2q}}\leq&\|G\|_{L^{1}}\|\rho\|_{L^{2q}}+\chi\int_{0}^{t}\|\nabla G\|_{L^{\frac{d(2+d)}{d^2+d+2}}}\|\rho\|_{L^{2q}}\|\nabla c\|_{L^{\frac{d(2+d)}{d-2}}}\, \mathrm{d}s\\
\leq&C+\chi\int_{0}^{t}(t-s)^{-\frac{d}{d+2}}\|\rho\|_{L^{2q}}(\|\rho\|_{L^{\frac{2+d}{2}}}+\|f\|_{L^{\frac{2+d}{2}}})\, \mathrm{d}s.
\end{split}
\end{equation*}
Using the Gronwall's inequality and (\ref{bc1000}), we have
\begin{equation*}
\|\rho\|_{L^{2q}}\leq C+C\chi\int_{0}^{t}(t-s)^{-\frac{d}{d+2}}(1+\|f\|_{L^{1+\frac{d}{2}}})\, \mathrm{d}s\leq C.
\end{equation*}
For the $L^q$-norm of $\nabla\rho$, through (\ref{s30}), we have
\begin{equation}
\begin{split}
\|\nabla\rho\|_{L^q}\leq&\|G\|_{L^1}\|\nabla\rho_{0}\|_{L^q}+\chi\int_{0}^{t}\|\nabla G\ast\nabla\rho\nabla c\|_{L^q}\, \mathrm{d}s+\chi\int_{0}^{t}\|\nabla G\ast\rho\Delta c\|_{L^q}\, \mathrm{d}s\\
\leq&C+\chi\int_{0}^{t}\|\nabla G\|_{L^{\frac{d(2+d)}{d^2+d+2}}}\|\nabla\rho\|_{L^q}\|\nabla c\|_{L^{\frac{d(2+d)}{d-2}}}\, \mathrm{d}s+\chi\int_{0}^{t}\|\nabla G\|_{L^1}\|\rho\|_{L^{2q}}\|\Delta c\|_{L^{2q}}\, \mathrm{d}s\\
\leq&C+\chi\int_{0}^{t}(t-s)^{-\frac{d}{d+2}}\|\nabla\rho\|_{L^q}(\|\rho\|_{L^{\frac{2+d}{2}}}+\|f\|_{L^{\frac{2+d}{2}}})\, \mathrm{d}s\\
&+C\int_{0}^{t}(t-s)^{-\frac{1}{2}}\, \mathrm{d}s+C\int_{0}^{t}(t-s)^{-\frac{r}{2(r-1)}}\, \mathrm{d}s+C\int_{0}^{t}\|f\|_{L^{2q}}^r\, \mathrm{d}s.
\end{split}
\end{equation}
Using the Gronwall's inequality to the above inequality, we finish the proof of this lemma.
\end{proof}

\subsection{The estimates of $\rho^{\varepsilon}$}
In order to get the connection between the particle system \eqref{100} and the limiting PDE, we will introduce a regularized intermediate PDE
\begin{equation}\label{17}
	\partial_t\rho^{\varepsilon}=\Delta\rho^{\varepsilon}-\nabla\cdot(\rho^{\varepsilon}\chi\nabla\widetilde{\Phi}_{\varepsilon}\ast(\rho^{\varepsilon}-f)),
\end{equation} 
and its corresponding Mckean Vlasov stochatic particle system in the next section.
In this subsection, for any given time $0<T<\infty$, we mainly give the estimate for the difference between $\rho$ and $\rho^{\varepsilon}$ on $[0, T]\times\mathbb{R}^d$ and the estimates of $\rho^{\varepsilon}$.

\begin{lemma}\label{e}
	For any $f\in {\mathbb{X}}$, the initial data satisfy (\ref{29}), then we have
	\begin{equation*}
		\|\rho-\rho^{\varepsilon}\|_{L^{\infty}(0, T; L^1(\mathbb{R}^d)\cap L^{\infty}(\mathbb{R}^d))}\leq C\varepsilon.
	\end{equation*}
\end{lemma}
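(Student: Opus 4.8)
\textit{Proof proposal.}
The plan is to study $w:=\rho-\rho^{\varepsilon}$, using that $w|_{t=0}=0$ while, by Proposition~\ref{a} together with its $\varepsilon$-uniform counterpart for \eqref{17}, $\rho$ and $\rho^{\varepsilon}$ obey \emph{the same} bounds uniformly in $\varepsilon$ and in $f\in X$: both are bounded in $L^{\infty}(0,T;L^{d/2}\cap W^{1,q})$, hence in $L^{\infty}_{t,x}$ and in $L^{\infty}(0,T;L^{p})$ for every $p\in[1,q]$, and the drifts $\nabla c=\nabla\Phi\ast(\rho-f)$, $\nabla c^{\varepsilon}=\nabla\widetilde\Phi^{\varepsilon}\ast(\rho^{\varepsilon}-f)$ satisfy the bounds of Section~\ref{PDEpart}, with $\|\nabla c\|_{L^{\infty}_{x}}\le C(1+\|f\|_{W^{1,q}})$ integrable in $t$ since $\|f\|_{W^{1,q}}\le l\in L^{r}(0,T)$, $r>2$. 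Subtracting the equation \eqref{17} for $\rho^{\varepsilon}$ from that for $\rho$ and regrouping the transport terms by inserting and removing $\rho^{\varepsilon}\,\nabla\Phi\ast(\rho-f)$ and $\rho^{\varepsilon}\,\nabla\Phi\ast\rho^{\varepsilon}$ yields
\begin{equation*}
\partial_{t}w=\Delta w-\chi\nabla\!\cdot\!\bigl(w\,\nabla c\bigr)-\chi\nabla\!\cdot\!\bigl(\rho^{\varepsilon}\,\nabla\Phi\ast w\bigr)+\chi\nabla\!\cdot\!\bigl(\rho^{\varepsilon}\,G_{\varepsilon}\bigr),\qquad w|_{t=0}=0,
\end{equation*}
where $G_{\varepsilon}:=(\nabla\widetilde\Phi^{\varepsilon}-\nabla\Phi)\ast(\rho^{\varepsilon}-f)$ is the regularization error. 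The first preparatory step is the consistency estimate $\|\nabla\widetilde\Phi^{\varepsilon}-\nabla\Phi\|_{L^{1}(\mathbb{R}^{d})}\le C\varepsilon$: the difference $\nabla\Phi-\nabla\widetilde\Phi$ equals $\nabla\Phi\,\mathbf{1}_{\{|x|\le 2\varepsilon\}}$, whose $L^{1}$ norm is $\sim\varepsilon$ because $\nabla\Phi\sim|x|^{1-d}$ is locally integrable, and the mollification error $\nabla\widetilde\Phi-j_{\varepsilon}\ast\nabla\widetilde\Phi$ is $O(\varepsilon^{2}|x|^{-d-1})$ away from the cut-off sphere (by the symmetry of $j_{\varepsilon}$) plus an $O(\varepsilon^{1-d})$ contribution on an $O(\varepsilon)$-thick shell, both of which integrate to $O(\varepsilon)$. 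By Young's inequality $\|G_{\varepsilon}\|_{L^{p}}\le C\varepsilon\|\rho^{\varepsilon}-f\|_{L^{p}}$, so with the uniform bounds one gets a source estimate $\|\rho^{\varepsilon}G_{\varepsilon}\|_{L^{m}}\le C\varepsilon(1+\|f\|_{W^{1,q}})$ for the exponents $m$ that will be used, with right-hand side in $L^{r}(0,T)$.

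The core of the argument is a \emph{coupled} $L^{1}$--$L^{2}$ estimate for $w$. Testing the equation with $w$ and integrating by parts, the only delicate term is $\chi\int\rho^{\varepsilon}(\nabla\Phi\ast w)\cdot\nabla w$; using $\|\rho^{\varepsilon}\|_{L^{\infty}}\le C$, the Hardy--Littlewood--Sobolev and interpolation bound $\|\nabla\Phi\ast w\|_{L^{2}}\lesssim\|w\|_{L^{2d/(d+2)}}\lesssim\|w\|_{L^{1}}^{2/d}\|w\|_{L^{2}}^{1-2/d}$, absorbing $\|\nabla w\|_{L^{2}}^{2}$ into the dissipation and applying Young's inequality, one obtains after Gronwall (the coefficient $\|\nabla c\|_{L^{\infty}_{x}}^{2}$ being $L^{1}$ in time) the bound $\|w(t)\|_{L^{2}}^{2}\le C\sup_{s\le t}\|w(s)\|_{L^{1}}^{2}+C\varepsilon^{2}$. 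Next, from the mild formulation and $\|\nabla e^{\tau\Delta}\|_{L^{1}\to L^{1}}\sim\tau^{-1/2}$, with $\|\rho^{\varepsilon}\nabla\Phi\ast w\|_{L^{1}}\le\|\rho^{\varepsilon}\|_{L^{a'}}\|\nabla\Phi\ast w\|_{L^{a}}\lesssim\|w\|_{L^{1}}^{1-\eta}\|w\|_{L^{2}}^{\eta}$ (Hardy--Littlewood--Sobolev with $a$ close to $d/(d-1)$) and $\|\rho^{\varepsilon}G_{\varepsilon}\|_{L^{1}}\le\|\rho^{\varepsilon}\|_{L^{1}}\|G_{\varepsilon}\|_{L^{\infty}}\le C\varepsilon(1+\|f\|_{W^{1,q}})$, after inserting the $L^{2}$ bound and using Young's inequality one arrives at
\begin{equation*}
\|w(t)\|_{L^{1}}\le C\varepsilon+C\int_{0}^{t}(t-s)^{-1/2}\bigl(\|\nabla c(s)\|_{L^{\infty}_{x}}+1\bigr)\sup_{\sigma\le s}\|w(\sigma)\|_{L^{1}}\,ds .
\end{equation*}
Since $\|\nabla c\|_{L^{\infty}_{x}}\in L^{r}(0,T)$ with $r>2$, the singular kernel $(t-s)^{-1/2}$ lies in $L^{r'}$ near the diagonal, so a weakly singular Gronwall inequality, applied on short subintervals and iterated over $[0,T]$, gives $\sup_{[0,T]}\|w\|_{L^{1}}\le C\varepsilon$, and therefore also $\sup_{[0,T]}\|w\|_{L^{2}}\le C\varepsilon$.

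Finally I would upgrade to the $L^{\infty}$ bound. Once $w\in L^{\infty}(0,T;L^{1}\cap L^{2})$ is known to be $O(\varepsilon)$, the nonlocal drift $\rho^{\varepsilon}\nabla\Phi\ast w$ becomes a controllable forcing whose $L^{m}$ norm is $O(\varepsilon)$ for a range of finite $m$, and a Moser--Alikakos iteration, exactly as in the $L^{\infty}$-bound for $\rho$ established in Section~\ref{PDEpart} (or, equivalently, a Duhamel estimate using $\|\nabla e^{\tau\Delta}\|_{L^{m}\to L^{\infty}}\sim\tau^{-\frac12-\frac{d}{2m}}$ with $d<m\le q$, Hardy--Littlewood--Sobolev for $\nabla\Phi\ast w$, the source bound for $G_{\varepsilon}$, and again a weakly singular Gronwall) propagates the $O(\varepsilon)$ control up the $L^{p}$ scale in finitely many steps and yields $\|w\|_{L^{\infty}(0,T;L^{\infty})}\le C\varepsilon$; together with the $L^{1}$ bound this is the claim. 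I expect the main obstacle to be precisely the aggregation term $\chi\nabla\!\cdot\!(\rho^{\varepsilon}\,\nabla\Phi\ast w)$: because $\nabla\Phi$ is not integrable and $\nabla\Phi\ast w$ is only controlled in $L^{p}$ (not better), this term cannot be absorbed by the diffusion in any single estimate, which is why the $L^{1}$, $L^{2}$ and higher-$L^{p}$ bounds must be closed in tandem, each feeding the next; and since the estimate must be \emph{quantitative} in $\varepsilon$, every constant in the Gronwall and absorption steps has to be tracked to be independent of $\varepsilon$, which is possible only thanks to the uniform-in-$\varepsilon$ a priori bounds on $\rho^{\varepsilon}$.
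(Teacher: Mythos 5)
Your decomposition of $w=\rho-\rho^{\varepsilon}$ is algebraically correct, but it deliberately keeps the unregularized kernel $\nabla\Phi$ in the comparison terms $w\nabla c$ and $\rho^{\varepsilon}\nabla\Phi\ast w$, and this is where your route diverges from the paper's and becomes needlessly heavy. The paper instead regroups so that \emph{every} convolution in the comparison part is with the bounded kernel $\nabla\widetilde\Phi_{\varepsilon}$; the singular $\Phi$ appears only once, inside a single consistency error $\rho\,\nabla(\widetilde\Phi_{\varepsilon}-\Phi)\ast(\rho-f)$, whose $L^{\infty}$ norm is estimated to be $O(\varepsilon)$ by Lemmas~\ref{f} and~\ref{g}. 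The decisive tool is then the elementary pointwise bound $\|\nabla\widetilde\Phi_{\varepsilon}\ast g\|_{L^{\infty}}\leq C(\|g\|_{L^{1}}+\|g\|_{L^{\infty}})$ (Lemma~\ref{l}, uniform in $\varepsilon$), which lets the mild formulation close \emph{directly} in the quantity $\|u\|_{L^{1}}+\|u\|_{L^{\infty}}$ with only $\|\nabla G\|_{L^{1}}\sim(t-s)^{-1/2}$ and a scalar Gronwall. No Hardy--Littlewood--Sobolev, no intermediate $L^{2}$ estimate, no coupled bootstrap.

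The key idea you are missing is that the same type of pointwise bound also holds for the unregularized kernel: splitting the convolution at $|x-y|=1$ gives $\|\nabla\Phi\ast w\|_{L^{\infty}}\leq C(\|w\|_{L^{1}}+\|w\|_{L^{\infty}})$ (this is exactly the unnumbered appendix lemma, quoted from~\cite{WWC}). Had you used this, your own decomposition would also close in $L^{1}\cap L^{\infty}$ via Duhamel with $\|\nabla G\|_{L^{1}}$, without the $L^{2}$ detour. Instead you invoke HLS, interpolate through $L^{2}$, and then try to recover $L^{\infty}$ via Moser--Alikakos or the $L^{m}\to L^{\infty}$ heat kernel bound. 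That last step is where a genuine gap appears: the kernel $\|\nabla e^{\tau\Delta}\|_{L^{m}\to L^{\infty}}\sim\tau^{-\frac{1}{2}-\frac{d}{2m}}$ must be paired, after Gronwall, with $\|\nabla c\|_{L^{\infty}_{x}}\leq C(1+\|f\|_{W^{1,q}})\in L^{r}(0,T)$, and integrability of the product over $[0,T]$ requires $r'(\frac{1}{2}+\frac{d}{2m})<1$, i.e. $r>\frac{2m}{m-d}$ for some $m\leq q$. Under the paper's standing assumptions $r>2$ and $q>d$ this is \emph{not} automatic (e.g. $q$ close to $d$ and $r$ close to $2$ violates it), so your $L^{\infty}$ step does not close in full generality. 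The paper's argument avoids this entirely because it always uses $\|\nabla G\|_{L^{1}}\sim(t-s)^{-1/2}$, which against $L^{r}(0,T)$ with $r>2$ is always integrable.

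In short: your decomposition is legitimate and your $L^{1}$--$L^{2}$ coupling is morally sound, but you overlook the simple $L^{1}\cap L^{\infty}\to L^{\infty}$ bound on the (regularized or truncated) Coulomb force that is the workhorse of the paper's proof, and as a consequence your final $L^{\infty}$ upgrade relies on an integrability condition strictly stronger than the assumptions $r>2$, $q>d$. Replacing your HLS/interpolation machinery by Lemma~\ref{l} (or its unnumbered companion for $\nabla\Phi$) and working directly with the pair $(\|u\|_{L^{1}},\|u\|_{L^{\infty}})$ in the mild formulation repairs the gap and collapses your three-stage scheme to the single Gronwall the paper uses.
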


\begin{proof}
	Let $u=\rho-\rho^{\varepsilon}$, it is easy to get
	\begin{align}\label{18}
			\partial_{t}u-\Delta u=&\chi\nabla\cdot(u\nabla\widetilde{\Phi}_{\varepsilon}\ast u)-\chi\nabla\cdot\big(u\nabla\widetilde{\Phi}_{\varepsilon}\ast(\rho-f)\big)
			\nonumber
			\\ 
			&-\chi\nabla\cdot(\rho\nabla\widetilde{\Phi}_{\varepsilon}\ast u)+\chi\nabla\cdot\big(\rho\nabla(\widetilde{\Phi}_{\varepsilon}-\Phi)\ast(\rho-f)\big).
	\end{align}
	Using the Duhamel's principle to (\ref{18}), we have
	\begin{align}\label{19}
			u=&\chi\int_{0}^{t}G\ast\nabla\cdot(u\nabla\widetilde{\Phi}_{\varepsilon}\ast u)\, \mathrm{d}s-\chi\int_{0}^{t}G\ast\nabla\cdot\big(u\nabla\widetilde{\Phi}_{\varepsilon}\ast(\rho-f)\big)\, \mathrm{d}s\nonumber
			\\
			&-\chi\int_{0}^{t}G\ast\nabla\cdot(\rho\nabla\widetilde{\Phi}_{\varepsilon}\ast u)\, \mathrm{d}s+\chi\int_{0}^{t}G\ast\nabla\cdot\big(\rho\nabla(\widetilde{\Phi}_{\varepsilon}-\Phi)\ast(\rho-f)\big)\, \mathrm{d}s\nonumber
			\\
			:=&O_{1}+O_{2}+O_{3}+O_{4},
	\end{align}
	where $G$ is the heat kernel. For the terms $O_{1}, O_{2}$ and $O_{3}$, using the Young's inequality and Lemma \ref{phi1}, we have
	\begin{align*}
			\|O_{1}\|_{L^p}\leq&\int_{0}^{t}\|\nabla G\ast(u\nabla\widetilde{\Phi}_{\varepsilon}\ast u)\|_{L^p}\, \mathrm{d}s\\
			\leq&\int_{0}^{t}\|\nabla G\|_{L^1}\|u\|_{L^p}\|\nabla\widetilde{\Phi}_{\varepsilon}\ast u\|_{L^{\infty}}\, \mathrm{d}s\\
			\leq&C(d)\int_{0}^{t}(t-s)^{-\frac{1}{2}}\|u\|_{L^p}(\|u\|_{L^1}+\|u\|_{L^{\infty}})\, \mathrm{d}s,
	\end{align*}
	\begin{align*}
			\|O_{2}\|_{L^p}\leq&\int_{0}^{t}\|\nabla G\ast\big(u\nabla\widetilde{\Phi}_{\varepsilon}\ast(\rho-f)\big)\|_{L^p}\, \mathrm{d}s\\
			\leq&\int_{0}^{t}\|\nabla G\|_{L^1}\|u\|_{L^p}\|\nabla\widetilde{\Phi}_{\varepsilon}\ast(\rho-f)\|_{L^{\infty}}\, \mathrm{d}s\\
			\leq&C(d)\int_{0}^{t}(t-s)^{-\frac{1}{2}}\|u\|_{L^p}(\|\rho-f\|_{L^1}+\|\rho-f\|_{L^{\infty}})\, \mathrm{d}s,
	\end{align*}
	and
		\begin{align*}
			\|O_{3}\|_{L^p}\leq&\int_{0}^{t}\|\nabla G\ast(\rho\nabla\widetilde{\Phi}_{\varepsilon}\ast u)\|_{L^p}\, \mathrm{d}s\\
			\leq&\int_{0}^{t}\|\nabla G\|_{L^1}\|\rho\|_{L^p}\|\nabla\widetilde{\Phi}_{\varepsilon}\ast u\|_{L^{\infty}}\, \mathrm{d}s\\
			\leq&C(d)\int_{0}^{t}(t-s)^{-\frac{1}{2}}\|\rho\|_{L^p}(\|u\|_{L^1}+\|u\|_{L^{\infty}})\, \mathrm{d}s.
	\end{align*}
	For the term $O_{4}$, by Lemma \ref{f} and \ref{g}, we have
		\begin{align*}
			\|O_{4}\|_{L^p}\leq&\int_{0}^{t}\|\nabla G\ast\big(\rho\nabla(\widetilde{\Phi}_{\varepsilon}-\Phi_{\varepsilon})\ast(\rho-f)\big)\|_{L^p}\, \mathrm{d}s\\
			&+\int_{0}^{t}\|\nabla G\ast\big(\rho\nabla(\Phi_{\varepsilon}-\Phi)\ast(\rho-f)\big)\|_{L^p}\, \mathrm{d}s\\
			\leq&\int_{0}^{t}\|\nabla G\|_{L^1}\|\rho\|_{L^p}\|j_{\varepsilon}\ast\nabla(\widetilde{\Phi}-\Phi)\ast(\rho-f)\|_{L^{\infty}}\, \mathrm{d}s\\
			&+\int_{0}^{t}\|\nabla G\|_{L^1}\|\rho\|_{L^p}\|\nabla(\Phi_{\varepsilon}-\Phi)\ast(\rho-f)\|_{L^{\infty}}\, \mathrm{d}s\\
			\leq&C(d)\varepsilon\int_{0}^{t}(t-s)^{-\frac{1}{2}}\|\rho\|_{L^p}\|\rho-f\|_{L^{\infty}}\, \mathrm{d}s\\
			&+C\varepsilon\int_{0}^{t}(t-s)^{-\frac{1}{2}}\|\rho\|_{L^p}\|\rho-f\|_{W^{1, q}}\, \mathrm{d}s.
		\end{align*}
Let $p=1$ and $\infty$, notice that $u(0,\cdot)=\rho_0-\rho_0=0$, by putting the above estimates into the resulting inequality, we have
\begin{equation*}
	\begin{split}
	\|u\|_{L^1}+\|u\|_{L^{\infty}}\leq&C(d)\int_{0}^{t}(t-s)^{-\frac{1}{2}}(1+\|u\|_{L^1\cap L^{\infty}}+\|\rho\|_{L^1\cap L^{\infty}}+\|f\|_{L^1\cap L^{\infty}})(\|u\|_{L^1}+\|u\|_{L^{\infty}})\, \mathrm{d}s\\
	&+C(d)\varepsilon\int_{0}^{t}(t-s)^{-\frac{1}{2}}\|\rho\|_{L^1\cap L^{\infty}}(\|\rho\|_{W^{1, q}}+\|f\|_{W^{1, q}})\, \mathrm{d}s.
	\end{split}
\end{equation*}
Using the Gronwall's inequality to the above inequality, together with Proposition \ref{bca}, we complete the proof of this lemma.
\end{proof}

\begin{lemma}\label{b10}
For any $f\in\mathbb{X}$, the initial data satisfy (\ref{29}), then we have
\begin{equation*}
\nabla\rho^{\varepsilon}\in L^{\infty}(0, T; L^q(\mathbb{R}^d)).
\end{equation*}
\end{lemma}

\begin{proof}
According to the equation (\ref{17}), we have
\begin{equation}\label{b100}
\rho^{\varepsilon}=G\ast\rho^{\varepsilon}_{0}-\chi\int_{0}^{t}G\ast\nabla\cdot\big(\rho^{\varepsilon}\nabla\widetilde{\Phi}_{\varepsilon}\ast(\rho^{\varepsilon}-f)\big)\, \mathrm{d}s,
\end{equation}
where $G$ is the heat kernel. In order to get the $L^q$- norm of $\nabla\rho^{\varepsilon}$, we need the following estimates which obtained mainly by Lemma \ref{e}, \ref{phi1} and \ref{bcyl1}.
\begin{equation*}
\begin{split}
&\int_{0}^{t}\|\nabla G\ast(\nabla\rho^{\varepsilon}\nabla\widetilde{\Phi}_{\varepsilon}\ast\big(\rho^{\varepsilon}-f)\big)\|_{L^q}\, \mathrm{d}s\\
\leq&\int_{0}^{t}\|\nabla G\|_{L^1}\|\nabla\rho^{\varepsilon}\|_{L^q}\|\nabla\widetilde{\Phi}_{\varepsilon}\ast(\rho^{\varepsilon}-f)\|_{L^{\infty}}\, \mathrm{d}s\\
\leq&C(d)\int_{0}^{t}(t-s)^{-\frac{1}{2}}\|\nabla\rho^{\varepsilon}\|_{L^q}\|\rho^{\varepsilon}-f\|_{L^{1}\cap L^{\infty}}\, \mathrm{d}s
\end{split}
\end{equation*}
and
\begin{equation*}
\begin{split}
&\int_{0}^{t}\|\nabla G\ast\big(\rho^{\varepsilon}\nabla^2\widetilde{\Phi}_{\varepsilon}\ast(\rho^{\varepsilon}-f)\big)\|_{L^q}\, \mathrm{d}s\\
\leq&\int_{0}^{t}\|\nabla G\|_{L^1}\|\rho^{\varepsilon}\|_{L^q}\|\nabla^2\widetilde{\Phi}_{\varepsilon}\ast(\rho^{\varepsilon}-f)\|_{L^{\infty}}\, \mathrm{d}s\\
\leq&C(d)\int_{0}^{t}\|\nabla G\|_{L^1}\|\rho^{\varepsilon}\|_{L^q}\|\rho^{\varepsilon}-f\|_{L^1\cap W^{1, q}}\, \mathrm{d}s\\
\leq&C(d)\int_{0}^{t}\|\nabla G\|_{L^1}\, \mathrm{d}s+C(d)\Big(\int_{0}^{t}\|\nabla G\|_{L^1}^{\frac{r}{r-1}}\, \mathrm{d}s\Big)^{1-\frac{1}{r}}\Big(\int_{0}^{t}\|f\|_{L^1\cap W^{1, q}}^r\, \mathrm{d}s\Big)^{\frac{1}{r}}\\
&+C(d)\int_{0}^{t}\|\nabla G\|_{L^1}\|\nabla\rho^{\varepsilon}\|_{L^q}\, \mathrm{d}s.
\end{split}
\end{equation*}
Taking the $L^q$-norm of (\ref{b100}) operated by $\nabla$, then putting the above inequalities into the resulting inequality, using the Gronwall's inequality, we finish the proof of this lemma.
\end{proof}

\section[Propagation of chaos]{Propagation of chaos for fixed control function}\label{meanfield}
In this section, we present the mean-field limit result for given control function $f\in {\mathbb{X}}$. We will give the convergence in probability for trajectory differences and the strong $L^1$ convergence of the $1$-st marginal of the $N$-particle distribution to the PDE solution. Both results are going to be used in the last section to prove the main theorem.

Following the framework of moderate interacting concept, we introduce the intermediate Mckean-Vlasov particle system
\begin{equation}\label{interparticle}
	\begin{cases}
		\mathrm{d}\overline{X}_{i}^{\varepsilon}=\chi\nabla\widetilde{\Phi}_{\varepsilon} \ast\rho^\varepsilon
		(t,\overline{X}_{i}^{\varepsilon})\,  \mathrm{d}t-\chi\nabla\widetilde{\Phi}_{\varepsilon}\ast f(t,\overline{X}_{i}^{\varepsilon})\,  \mathrm{d}t+\sqrt{2}\, \mathrm{d}W_{t}^{i},\\
		\overline{X}_{i}^{\varepsilon}(0)=\xi_{i}.
	\end{cases}
\end{equation}
For any fixed $\varepsilon$, it is well-known that \eqref{interparticle} has a unique square integrable solution $\overline{X}_i^\varepsilon[f]$ and its distribution $\rho^\varepsilon[f]$ is a weak solution of \eqref{17}, we refer for example to \cite[Chapter 3]{LackerNotes} as a reference.

\subsection{Convergence in probability}

We first recall the law of large number result from \cite{LCZ,CHJ24}.

\begin{lemma}\label{j}
	Assume $(\overline{Y}_{i})_{i=1,\cdots,N}$ are independent and identically distributed $\mathbb{R}^d$ valued random variables with commom distribution $v\in L^1(\mathbb{R}^d)$, $U\in L^\infty(\mathbb{R}^d)$ is given and for any  $\theta\in \mathbb{R}$
	\begin{equation*}
		\mathcal{B}_{\theta}^{i}(U, v)=\Big\{\omega\in\Omega: \Big|\frac{1}{N}\sum_{j=1}^{N}U(\overline{Y}^{i}-\overline{Y}^{j})-U\ast v(\overline{Y}^i)\Big|>\frac{1}{N^{\theta}}\Big\}.
	\end{equation*}
	Let $\mathcal{B}_{\theta}^{N}(U, v)=\bigcup_{i=1}^{N}\mathcal{B}_{\theta}^{i}(U, v)$, then for any $\widetilde{k}\geq 1$, we have
	\begin{equation*}
		\mathbb{P}(\mathcal{B}_{\theta}^{N}(U, v))\leq N\max_{1\leq i\leq N}\mathbb{P}(\mathcal{B}_{\theta}^i(U, v))\leq C(\tilde{k}) N^{2\tilde{k}(\theta-\frac{1}{2})+1}\|U\|_{L^{\infty}(\mathbb{R}^d)}^{2\tilde{k}}.
	\end{equation*}
\end{lemma}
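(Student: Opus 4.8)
The plan is to reduce the estimate to a concentration inequality for a sum of independent, bounded, mean-zero random variables. The first inequality is just the union bound, $\mathbb{P}\big(\mathcal{B}_{\theta}^{N}(U,v)\big)\le\sum_{i=1}^{N}\mathbb{P}\big(\mathcal{B}_{\theta}^{i}(U,v)\big)\le N\max_{1\le i\le N}\mathbb{P}\big(\mathcal{B}_{\theta}^{i}(U,v)\big)$, so the whole content is the bound on a single $\mathbb{P}\big(\mathcal{B}_{\theta}^{i}(U,v)\big)$. First I would fix $i$ and condition on $\overline{Y}^{i}$, writing
\[
\frac{1}{N}\sum_{j=1}^{N}U(\overline{Y}^{i}-\overline{Y}^{j})-(U\ast v)(\overline{Y}^{i})=\frac{1}{N}\big(U(0)-(U\ast v)(\overline{Y}^{i})\big)+\frac{1}{N}\sum_{j\neq i}W_{j},\qquad W_{j}:=U(\overline{Y}^{i}-\overline{Y}^{j})-(U\ast v)(\overline{Y}^{i}).
\]
Conditionally on $\overline{Y}^{i}=y$, the variables $\{W_{j}\}_{j\neq i}$ are i.i.d., bounded by $2\|U\|_{L^{\infty}}$, and centered, since $\overline{Y}^{j}$ has law $v$ so $\mathbb{E}[U(y-\overline{Y}^{j})]=(U\ast v)(y)$. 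The diagonal term is deterministic and bounded by $\tfrac{2}{N}\|U\|_{L^{\infty}}$, so for $N$ large it lies below $\tfrac{1}{2N^{\theta}}$ and can be discarded; for the finitely many remaining small $N$ the claimed bound holds trivially after enlarging $C(\tilde{k})$.

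It then remains to estimate $\mathbb{P}\big(\big|\tfrac{1}{N}\sum_{j\neq i}W_{j}\big|>\tfrac{1}{2N^{\theta}}\mid\overline{Y}^{i}=y\big)$ uniformly in $y$. I would apply Markov's inequality with the $2\tilde{k}$-th power together with a Marcinkiewicz--Zygmund (Rosenthal-type) moment inequality: for i.i.d.\ centered $W_{j}$ with $|W_{j}|\le 2\|U\|_{L^{\infty}}$ and $\tilde{k}\ge1$,
\[
\mathbb{E}\Big[\Big|\sum_{j\neq i}W_{j}\Big|^{2\tilde{k}}\Big]\le C(\tilde{k})\Big((N-1)\|U\|_{L^{\infty}}^{2\tilde{k}}+\big((N-1)\|U\|_{L^{\infty}}^{2}\big)^{\tilde{k}}\Big)\le C(\tilde{k})\,N^{\tilde{k}}\|U\|_{L^{\infty}}^{2\tilde{k}}.
\]
Markov's inequality then gives
\[
\mathbb{P}\Big(\Big|\tfrac{1}{N}\sum_{j\neq i}W_{j}\Big|>\tfrac{1}{2N^{\theta}}\Big)\le(2N^{\theta})^{2\tilde{k}}N^{-2\tilde{k}}\,C(\tilde{k})\,N^{\tilde{k}}\|U\|_{L^{\infty}}^{2\tilde{k}}=C(\tilde{k})\,N^{2\tilde{k}(\theta-\frac{1}{2})}\|U\|_{L^{\infty}}^{2\tilde{k}},
\]
and, being uniform in $y$, this also bounds $\mathbb{P}\big(\mathcal{B}_{\theta}^{i}(U,v)\big)$; multiplying by $N$ from the union bound yields $\mathbb{P}\big(\mathcal{B}_{\theta}^{N}(U,v)\big)\le C(\tilde{k})\,N^{2\tilde{k}(\theta-\frac{1}{2})+1}\|U\|_{L^{\infty}}^{2\tilde{k}}$.

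The main obstacle, and really the only non-routine ingredient, is the moment estimate for the centered i.i.d.\ sum with the sharp exponent $N^{\tilde{k}}$. I would either invoke the Marcinkiewicz--Zygmund inequality directly, or prove it by expanding $\mathbb{E}\big[(\sum_{j\neq i}W_{j})^{2\tilde{k}}\big]$ and noting that, after centering, only those multi-indices in which every index appears at least twice contribute, of which there are $O(N^{\tilde{k}})$, each of size $O(\|U\|_{L^{\infty}}^{2\tilde{k}})$. A minor bookkeeping point is that the diagonal $j=i$ term must be peeled off before centering; being $O(N^{-1})$ it only forces a harmless restriction to large $N$. This is precisely the route taken in \cite{LCZ,CHJ24}, which I would follow.
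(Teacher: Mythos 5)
Your proof is correct and follows the standard route (conditioning on $\overline{Y}^i$, centering the off-diagonal sum, a Marcinkiewicz--Zygmund/Rosenthal moment bound, then Markov), which is exactly the argument in the references \cite{LCZ,CHJ24} that the paper cites for Lemma~\ref{j}; the paper itself recalls the lemma without proof. One small remark: the "finitely many small $N$" wording is imprecise since the threshold $N^{1-\theta}>4\|U\|_{L^\infty}$ depends on $\|U\|_{L^\infty}$, but the claimed bound does hold for \emph{all} $N$ with a fixed $C(\tilde k)$, because whenever $N^{1-\theta}\le 4\|U\|_{L^\infty}$ one has $N^{2\tilde k(\theta-\frac12)+1}\|U\|_{L^\infty}^{2\tilde k}\ge N^{\tilde k+1}/4^{2\tilde k}\ge 4^{-2\tilde k}$, so the estimate is trivially satisfied once $C(\tilde k)\ge 4^{2\tilde k}$.
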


The first propagation of chaos is the following convergence  $X^{N,\varepsilon}_i[f]-\overline{X}^\varepsilon_i[f]\rightarrow 0$ in the sense of probability, namely
\begin{proposition}\label{Convergenceinprobability}
	For given $f\in {\mathbb{X}}$, suppose $X^{N,\varepsilon}_i[f]$ and $\overline{X}^\varepsilon_i[f]$ are the solutions of \eqref{100} and \eqref{interparticle} separately. For $\alpha\in (0,\frac12)$, let 
	\begin{equation}\label{Adef}
		 \mathcal{A}_{\alpha}=\Big\{\omega\in\Omega: \max_{1\leq i\leq N} \Big|X^{N, \varepsilon}_i[f]-\overline{X}^{\varepsilon}_i[f]\Big|\geq N^{-\alpha}\Big\},
	\end{equation}
then there exists $\beta_*>0$, such that for $\beta\in (0,\beta_*)$ it holds for $\varepsilon=N^{-\beta}$
\begin{equation}
	\label{ProA}\mathbb{P}(\mathcal{A}_\alpha)\leq \frac{C(\gamma)}{N^\gamma}, \quad \forall\gamma>0.
\end{equation}
\end{proposition}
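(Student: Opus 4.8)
The plan is to follow the Lazarovici--Pickl strategy of controlling the maximal trajectory deviation $\mathcal{M}(t):=\max_{1\leq i\leq N}|X_i^{N,\varepsilon}[f](t)-\overline{X}_i^{\varepsilon}[f](t)|$ by a Gronwall argument that holds off an event of small probability. Subtracting \eqref{interparticle} from \eqref{100}, the Brownian terms cancel, and for each $i$ we obtain
\[
X_i^{N,\varepsilon}-\overline{X}_i^{\varepsilon}=\chi\int_0^t\Big(\frac1N\sum_{j=1}^N\nabla\widetilde{\Phi}_\varepsilon(X_i^{N,\varepsilon}-X_j^{N,\varepsilon})-\nabla\widetilde{\Phi}_\varepsilon\star\rho^\varepsilon(s,\overline{X}_i^\varepsilon)\Big)\,ds+\chi\int_0^t\big(\nabla\widetilde{\Phi}_\varepsilon\ast f(X_i^{N,\varepsilon})-\nabla\widetilde{\Phi}_\varepsilon\ast f(\overline{X}_i^\varepsilon)\big)\,ds.
\]
I would split the first integrand into three pieces: (a) the fluctuation term $\frac1N\sum_j\nabla\widetilde{\Phi}_\varepsilon(\overline{X}_i^\varepsilon-\overline{X}_j^\varepsilon)-\nabla\widetilde{\Phi}_\varepsilon\star\rho^\varepsilon(\overline{X}_i^\varepsilon)$, which is exactly the object controlled by Lemma \ref{j} with $U=\nabla\widetilde{\Phi}_\varepsilon$ (so $\|U\|_{L^\infty}\leq C\varepsilon^{-(d-1)}=CN^{\beta(d-1)}$), (b) the Lipschitz-in-the-particle error from replacing $\overline{X}_j^\varepsilon$ by $X_j^{N,\varepsilon}$ inside $\nabla\widetilde{\Phi}_\varepsilon(X_i^{N,\varepsilon}-\cdot)$, and (c) the error from replacing $\overline{X}_i^\varepsilon$ by $X_i^{N,\varepsilon}$ in the convolution term $\nabla\widetilde{\Phi}_\varepsilon\star\rho^\varepsilon$. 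Pieces (b), (c) and the control term are all handled by the Lipschitz bound $\|\nabla^2\widetilde{\Phi}_\varepsilon\|_{L^\infty}\leq C\varepsilon^{-d}=CN^{\beta d}$ and, for (c) and the control term, $\|\nabla^2\widetilde{\Phi}_\varepsilon\ast\rho^\varepsilon\|_{L^\infty}\lesssim\|\rho^\varepsilon\|_{L^\infty}$, $\|\nabla^2\widetilde{\Phi}_\varepsilon\ast f\|_{L^\infty}\lesssim\|f\|_{W^{1,q}}$ (using $q>d$ and the uniform-in-$\varepsilon$ bounds on $\rho^\varepsilon$ from Proposition \ref{a} together with Lemma \ref{e}); each contributes $\leq C N^{\beta d}\int_0^t\mathcal{M}(s)\,ds$ to the bound, plus $\int_0^t\|f(s)\|_{W^{1,q}}\mathcal{M}(s)\,ds$ from the control.

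The next step is to define, for a suitably chosen $\theta\in(\alpha,\tfrac12)$, the bad set $\mathcal{B}^N_\theta(\nabla\widetilde{\Phi}_\varepsilon,\rho^\varepsilon)$ as in Lemma \ref{j}; on its complement, term (a) is $\leq N^{-\theta}$ uniformly in $i$ and $t$ (after integrating in time over $[0,T]$ this is still $\leq TN^{-\theta}$). Hence on $\Omega\setminus\mathcal{B}^N_\theta$ we get
\[
\mathcal{M}(t)\leq \chi T N^{-\theta}+C\big(N^{\beta d}+1\big)\int_0^t\big(1+\|f(s)\|_{W^{1,q}}\big)\mathcal{M}(s)\,ds,
\]
so Gronwall gives $\sup_{[0,T]}\mathcal{M}\leq \chi T N^{-\theta}\exp\{C(N^{\beta d}+1)(T+\|l\|_{L^r(0,T)}T^{1-1/r})\}$. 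For the right-hand side to be $\leq N^{-\alpha}$ we need the exponential factor to stay subpolynomial, i.e. $N^{\beta d}\log$-type growth must be dominated; this forces $\beta$ small, and more precisely I would absorb the exponential not by a single Gronwall but by iterating on short time intervals or, more simply, by demanding $\beta d<$ (something like) $\theta-\alpha$ after taking logarithms — this is where the threshold $\beta_*$ is produced. Concretely: since $\exp\{CN^{\beta d}\}$ is not subpolynomial, the correct route is to keep $\varepsilon$ fixed in the Lipschitz constants only through $\|\nabla^2\widetilde{\Phi}_\varepsilon\star\rho^\varepsilon\|_{L^\infty}$ and $\|\nabla^2\widetilde{\Phi}_\varepsilon\ast f\|_{L^\infty}$, which are bounded \emph{independently of $\varepsilon$}, and to control term (b) using that $|\nabla\widetilde\Phi_\varepsilon(X_i-X_j)-\nabla\widetilde\Phi_\varepsilon(\overline X_i-\overline X_j)|$ can be split via the $N^{-\theta}$-smallness of the fluctuations — this is the standard Lazarovici--Pickl bootstrap where one shows the Lipschitz constant effectively seen is $O(\|\rho^\varepsilon\|_{W^{1,\infty}}+\|f\|_{W^{1,q}})$ on the good event, so the Gronwall factor is genuinely $\varepsilon$-independent.

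Finally, the probability estimate: choose $\theta<\tfrac12$ with $\theta>\alpha$, and apply Lemma \ref{j}. It gives $\mathbb{P}(\mathcal{B}^N_\theta)\leq C(\tilde k)N^{2\tilde k(\theta-1/2)+1}\|\nabla\widetilde{\Phi}_\varepsilon\|_{L^\infty}^{2\tilde k}\leq C(\tilde k)N^{2\tilde k(\theta-1/2+\beta(d-1))+1}$. Since $\mathcal{A}_\alpha\subset\mathcal{B}^N_\theta$ once the Gronwall bound $\sup\mathcal{M}\leq N^{-\alpha}$ holds on the complement, we get $\mathbb{P}(\mathcal{A}_\alpha)\leq C(\tilde k)N^{2\tilde k(\theta-1/2+\beta(d-1))+1}$; choosing $\beta_*$ so that $\theta-\tfrac12+\beta(d-1)<0$ for all $\beta<\beta_*$ (e.g. any $\theta\in(\alpha,\tfrac12)$ and $\beta_*=(\tfrac12-\theta)/(d-1)$), and then sending $\tilde k\to\infty$, yields $\mathbb{P}(\mathcal{A}_\alpha)\leq C(\gamma)N^{-\gamma}$ for every $\gamma>0$. \textbf{The main obstacle} is the one flagged above: the naive Lipschitz constant of $\nabla\widetilde{\Phi}_\varepsilon$ is $\varepsilon^{-d}=N^{\beta d}$, which would make the Gronwall constant super-polynomially large and destroy the estimate; the resolution is the Lazarovici--Pickl trick of bounding the discrete force difference on the good event by the regularized PDE data ($\|\rho^\varepsilon\|_{W^{1,\infty}}$, $\|f\|_{W^{1,q}}$), which are controlled uniformly in $\varepsilon$ by Proposition \ref{a} and Lemma \ref{e}, together with a careful choice of the relation between $\alpha$, $\theta$, and $\beta$.
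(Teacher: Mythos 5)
Your plan is in the right spirit and correctly identifies the central obstacle (the naive Lipschitz constant $\|\nabla^2\widetilde\Phi_\varepsilon\|_{L^\infty}\sim N^{\beta d}$ would wreck a direct Gronwall estimate), and you correctly bring in Lemma \ref{j} for the fluctuation terms, but the route you propose differs from the paper's and has a genuine gap. The paper does not run a pathwise Gronwall on a good event. Instead it introduces a stopping time $\tau$ (the first time the maximal deviation reaches $N^{-\alpha}$), forms the bounded stopped process $S(t)=N^{2\alpha k}\max_i|(X_i^{N,\varepsilon}-\overline X_i^\varepsilon)(t\wedge\tau)|^{2k}\leq 1$, reduces the claim via Markov's inequality to bounding $\sup_t\mathbb E(S(t))$, and then runs Gronwall on $\mathbb E(S(t))$. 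The problematic interaction term $I_1$ is handled by Taylor-expanding $\nabla\widetilde\Phi_\varepsilon$ to second order: the second-order remainder carries $\|\nabla^3\widetilde\Phi_\varepsilon\|_{L^\infty}$ but also an extra factor $N^{-\alpha}$ coming from the stopping time, and the first-order coefficient $\frac1N\sum_j|\nabla^2\widetilde\Phi_\varepsilon(\overline X_i-\overline X_j)|$ is approximated via Lemma \ref{j} by $|\nabla^2\widetilde\Phi_\varepsilon|\ast\rho^\varepsilon$, which is $O(\log(1/\varepsilon))=O(\beta\log N)$ -- not $O(1)$ as you assert; your claim that the effective Lipschitz constant is $O(\|\rho^\varepsilon\|_{W^{1,\infty}}+\|f\|_{W^{1,q}})$ is imprecise, and the paper explicitly keeps the $(1-\ln 2\varepsilon)$ factor, which is later absorbed because $N^{C\beta\gamma}$ is subpolynomial for $\beta$ small.

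The concrete gap in your version is the time-uniformity of the good event. Lemma \ref{j} bounds the probability that the empirical average deviates from the convolution at a \emph{fixed} time $s$; the event $\mathcal B^N_\theta$ you want to work off of is really $\mathcal B^N_\theta(s)$. To run a pathwise Gronwall for $\sup_{t\in[0,T]}\mathcal M(t)$ you need a bad set that is small and works uniformly in $s\in[0,T]$, which requires an additional step (time discretization plus a modulus-of-continuity argument, or a Doob-type maximal inequality) that you do not supply. The paper's moment formulation avoids this entirely: the indicator $I_{\mathcal B^N_\theta(s)}$ appears \emph{inside} the time integral and under the expectation, so Fubini reduces the matter to the pointwise-in-time bound of Lemma \ref{j}, applied for each $s$ separately. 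This, together with the Taylor-expansion trick that exploits the stopping time to tame $\|\nabla^3\widetilde\Phi_\varepsilon\|_{L^\infty}$, is the core of the paper's argument and is missing from your plan.
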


\begin{proof} In the proof, we omit the dependence of $f$ for convenience. Namely we denote $X^{N,\varepsilon}_i[f]=X^{N,\varepsilon}_i$ and $\overline{X}_i^{\varepsilon}[f]=\overline{X}_i^{\varepsilon}$.
	
 We follow the method given in \cite{LCZ,CHJ24}. By introducing the following stopping time
	\begin{equation*}
		\tau(\omega)=\inf\Big\{t\in[0, T]|\max_{1\leq i\leq N}\Big|X_{i}^{N, \varepsilon}(t)-\overline{X}_{i}^{\varepsilon}(t)\Big|\geq N^{-\alpha}\Big\},
	\end{equation*}
we define the stopped process
	\begin{equation*}
		S(t)=N^{2\alpha k}\max_{1\leq i\leq N}\Big|(X_{i}^{N, \varepsilon}-\overline{X}_{i}^{\varepsilon})(t\wedge\tau)\Big|^{2k}\leq 1.
	\end{equation*}
Then Markov's inequality implies that it is enough to estimate $\sup_{0\leq t\leq T}\mathbb{E}(S(t))$, i.e.
	\begin{equation*}
		\sup_{0\leq t\leq T}\mathbb{P}\Big(\max_{1\leq i\leq N}\Big|X_{i}^{N, \varepsilon}-\overline{X}_{i}^{\varepsilon}\Big|\geq N^{-\alpha}\Big)\leq\sup_{0\leq t\leq T}\mathbb{E}(S(t)=1)\leq\sup_{0\leq t\leq T}\mathbb{E}(S(t)).
	\end{equation*}
Thus for any $t\land\tau$, we have
	\begin{equation*}
		\begin{split}
			\mathbb{E}(S(t))=&N^{2\alpha k}\mathbb{E}\big(\max_{1\leq i\leq N}|(X_{i}^{N, \varepsilon}-\overline{X}_{i}^{\varepsilon})(t\land\tau)|^{2k}\big)\\
			\leq&2k \chi N^{\alpha}\mathbb{E}\Big(\max_{1\leq i\leq N}\int_{0}^{t\land\tau}\Big|(\nabla\widetilde{\Phi}_{\varepsilon}\ast\mu_{N})(X_{i}^{N, \varepsilon}(s))-(\nabla\widetilde{\Phi}_{\varepsilon}\ast\overline{\mu}_{N})(\overline{X}_{i}^{\varepsilon}(s))\Big|S(s)^{\frac{2k-1}{2k}} \, \mathrm{d}s\Big)\\
			&+2k\chi N^{\alpha}\mathbb{E}\Big(\max_{1\leq i\leq N}\int_{0}^{t\land\tau}\Big|(\nabla\widetilde{\Phi}_{\varepsilon}\ast\overline{\mu}_{N})(\overline{X}_{i}^{\varepsilon}(s))-(\nabla\widetilde{\Phi}_{\varepsilon}\ast\rho^{\varepsilon})(s,\overline{X}_{i}^{\varepsilon}(s))\Big| S(s)^{\frac{2k-1}{2k}}\, \mathrm{d}s\Big)\\
			&+2k\chi N^{\alpha}\mathbb{E}\Big(\max_{1\leq i\leq N}\int_{0}^{t\land\tau}\Big|(\nabla\widetilde{\Phi}_{\varepsilon}\ast f)(s,{X}_{i}^{N, \varepsilon}(s))-(\nabla\widetilde{\Phi}_{\varepsilon}\ast f)(s,\overline{X}_{i}^{\varepsilon}(s))\Big| S(s)^{\frac{2k-1}{2k}}\, \mathrm{d}s\Big)\\
			:=&K_{1}+K_{2}+K_{3},
		\end{split}
	\end{equation*}
where $\overline{\mu}_N$ is the empirical measure of $\overline{X}_{i}^{\varepsilon}$, $i=1,\cdots,N$.

For $K_{1}$ we use the Taylor's expansion and obtain that
	\begin{equation*}
		\begin{split}
			K_{1}\leq &\, 2k\chi N^{\alpha}\mathbb{E}\Big(\max_{1\leq i\leq N}\int_{0}^{t\land\tau}\Big|\frac{1}{N}\sum_{j=1}^{N}\nabla^2\widetilde{\Phi}_{\varepsilon}(\overline{X}_{i}^{\varepsilon}-\overline{X}_{j}^{\varepsilon})(X_{i}^{N, \varepsilon}-X_{j}^{N, \varepsilon}-(\overline{X}_{i}^{\varepsilon}-\overline{X}_{j}^{\varepsilon}))\Big| S(s)^{\frac{2k-1}{2k}}\, \mathrm{d}s\Big)\\
			&+2k\chi N^{\alpha}\mathbb{E}\Big(\max_{1\leq i\leq N}\int_{0}^{t\land\tau}\|\nabla^3\widetilde{\Phi}_{\varepsilon}\|_{L^{\infty}}\Big|\frac{1}{N}\sum_{j=1}^{N}(X_{i}^{N, \varepsilon}-X_{j}^{N, \varepsilon}-(\overline{X}_{i}^{\varepsilon}-\overline{X}_{j}^{\varepsilon}))\Big|^2 S(s)^{\frac{2k-1}{2k}}\, \mathrm{d}s\Big)\\
			\leq&K_{11}+\frac{2k\chi }{N^{\alpha}}\|\nabla^3\widetilde{\Phi}_{\varepsilon}\|_{L^{\infty}}\int_{0}^{t}\mathbb{E}(S(s))\, \mathrm{d}s.
		\end{split}
	\end{equation*}
For $K_{11}$ we use the law of large number to approximate the coefficient by a convolution form, namely
\begin{align*}
			K_{11}\leq&4k\chi N^{\alpha}\mathbb{E}\Big(\max_{1\leq i\leq N}\int_{0}^{t\land\tau}\frac{1}{N}\sum_{j=1}^{N}|\nabla^2\widetilde{\Phi}_{\varepsilon}(\overline{X}_{i}^{\varepsilon}-\overline{X}_{j}^{\varepsilon})| S(s)\, \mathrm{d}s\Big)\\
			\leq&4k\chi\mathbb{E}\Big(\max_{1\leq i\leq N}\int_{0}^{t\land\tau}\Big(\frac{1}{N}\sum_{j=1}^{N}|\nabla^2\widetilde{\Phi}_{\varepsilon}(\overline{X}_{i}^{\varepsilon}-\overline{X}_{j}^{\varepsilon})|-|\nabla^2\widetilde{\Phi}_{\varepsilon}|\ast\rho^{\varepsilon}(s,\overline{X}_{i}^{\varepsilon})\Big)S(s)\, \mathrm{d}s\Big)\\
			&+4k\chi\mathbb{E}\Big(\max_{1\leq i\leq N}\int_{0}^{t\land\tau}|\nabla^2\widetilde{\Phi}_{\varepsilon}|\ast\rho^{\varepsilon}(s,\overline{X}_{i}^{\varepsilon})S(s)\, \mathrm{d}s\Big)\\
			:=&K_{111}+K_{112},
	\end{align*}
where the second term can be easily estimated by the definition of $\widetilde{\Phi}$,
	\begin{equation*}
		K_{112}\leq 4k\chi (1-\ln 2\varepsilon)\int_{0}^{t}\mathbb{E}(S(s))\, \mathrm{d}s.
	\end{equation*}

For $K_{111}$, by taking $U=|\nabla^2\widetilde{\Phi}_{\varepsilon}|,\ v=\rho^{\varepsilon}$ and $\overline{Y}_i=\overline{X}_i^\varepsilon$ in Lemma \ref{j}, then for $\theta\in [0,\frac12)$ we have
	\begin{equation*}
		\begin{split}
			K_{111}\leq&4k\chi\mathbb{E}\Big(\max_{1\leq i\leq N}\int_{0}^{t\land\tau}({\mathbb{I}}_{(\mathcal{B}^N_{\theta})^c}+{\mathbb{I}}_{\mathcal{B}^N_{\theta}})\Big(\frac{1}{N}\sum_{j=1}^{N}|\nabla^2\widetilde{\Phi}_{\varepsilon}(\overline{X}_{i}^{\varepsilon}-\overline{X}_{j}^{\varepsilon})|-|\nabla^2\widetilde{\Phi}_\varepsilon|\ast\rho^{\varepsilon}(s,\overline{X}_i^\varepsilon) \Big)S(s)\, \mathrm{d}s\Big)\\
			\leq&4k\chi\int_{0}^{t}\mathbb{E}(S(s))\, \mathrm{d}s+C(t, k, k_1)\|\nabla^2\widetilde{\Phi}_{\varepsilon}\|_{L^{\infty}(\mathbb{R}^d)}^{1+2 k_1}N^{2 k_1(\theta-\frac{1}{2})+1}.
		\end{split}
	\end{equation*}
For the estimate of $K_2$ we  use the Young's inequality and again the law of large number in Lemma \ref{j} with  $U=\nabla \widetilde{\Phi}_{\varepsilon},\ v=\rho^{\varepsilon}$ and $\overline{Y}_i=\overline{X}_i^\varepsilon$, and obtain that for $\theta_1\in (0,\frac12)$
	\begin{equation*}
		\begin{split}
			K_{2}\leq&C(k)N^{2\alpha k}\mathbb{E}\Big(\max_{1\leq i\leq N}\int_{0}^{t\land\tau}\Big|(\nabla\widetilde{\Phi}_{\varepsilon}\ast\overline{\mu}_{N})(\overline{X}_{i}^{\varepsilon}(s))-(\nabla\widetilde{\Phi}_{\varepsilon}\ast\rho^{\varepsilon})(s, \overline{X}_{i}^{\varepsilon}(s))\Big|^{2k}\, \mathrm{d}s\Big)+\int_{0}^{t}\mathbb{E}(S(s))\, \mathrm{d}s\\
			\leq&C(k)N^{2k(\alpha-\theta_{1})}+C(t, k_2)N^{2\alpha k}\|\nabla\widetilde{\Phi}_{\varepsilon}\|_{L^{\infty}(\mathbb{R}^d)}^{2(k+k_{2})}N^{2k_2(\theta_{1}-\frac{1}{2})+1}+\int_{0}^{t}\mathbb{E}(S(s))\, \mathrm{d}s.
		\end{split}
	\end{equation*}
Furthermore, for the estimate of $K_{3}$, by Lemma \ref{bcyl1}
	\begin{equation*}
			K_{3}\leq 2k\chi\int_{0}^{t}l(s)\mathbb{E}(S(s))\, ds.
	\end{equation*}
As a summary, we obtain
	\begin{equation*}
		\begin{split}
			\mathbb{E}(S(t))\leq&4k\chi\int_{0}^{t}\Big(1+N^{-\alpha}\|\nabla^3\widetilde{\Phi}_{\varepsilon}\|_{L^{\infty}(\mathbb{R}^d)}-\ln 2\varepsilon+l(s)\Big)\mathbb{E}(S(s))\, \mathrm{d}s\\
			&+C(k)N^{(\alpha-\theta_{1})2k}+C(t, k, k_1)N^{2 k_1(\theta-\frac{1}{2})+1}\|\nabla^2\widetilde{\Phi}_{\varepsilon}\|_{L^{\infty}(\mathbb{R}^d)}^{2 k_1+1}\\
			&+C(t, k, k_2)N^{2\alpha k+2 k_2(\theta_{1}-\frac{1}{2})+1}\|\nabla\widetilde{\Phi}_{\varepsilon}\|_{L^{\infty}(\mathbb{R}^d)}^{2(k+k_2)}.
		\end{split}
	\end{equation*}
By putting into the estimates $\|\nabla\widetilde{\Phi}_{\varepsilon}\|_{L^{\infty}(\mathbb{R}^d)}\leq\varepsilon^{1-d},\ \|\nabla^2\widetilde{\Phi}_{\varepsilon}\|_{L^{\infty}(\mathbb{R}^d)}\leq\varepsilon^{-d},\ \|\nabla^3\widetilde{\Phi}_{\varepsilon}\|_{L^{\infty}(\mathbb{R}^d)}\leq\varepsilon^{-(d+1)}$ and $\theta=0$, $\varepsilon=N^{-\beta}$, we arrive at
	\begin{equation*}
		\begin{split}
			\mathbb{E}(S(t))\leq&4k\chi\int_{0}^{t}\Big(1+N^{-\alpha+\beta(d+1)}-\ln (2N^{-\beta})+l(s)\Big)\mathbb{E}(S(s))\, \mathrm{d}s\\
			&+C(k)N^{(\alpha-\theta_{1})2k}+C(t, k, k_1)N^{k_1(2\beta d-1)+\beta d+1}\\
			&+C(t, k, k_2)N^{2k(\alpha+\beta (d-1))+1 +2k_2((\theta_{1}-\frac{1}{2})+\beta(d-1))}.
		\end{split}
	\end{equation*}
	In the end, under the condition
	\begin{equation}\label{conditiontheta}
	\alpha<\theta_1,\quad 2\beta d<1,\quad \theta_1<\frac12-\beta(d-1), 
	\end{equation}
we have that for any $\gamma>0$ there exists $k=\frac{\gamma}{\theta_1-\alpha}$ such that $N^{(\alpha-\theta_1)2k}= N^{-2\gamma}$, and there exists $k_1$ such that 
$$N^{k_1(2\beta d-1)+\beta d+1}\leq N^{-2\gamma}.$$
Furthermore, for this selected $k$, there exists $k_2$ such that
$$
N^{2k(\alpha+\beta (d-1))+1 +2k_2((\theta_{1}-\frac{1}{2})+\beta(d-1))}\leq N^{-2\gamma}.
$$
Then if $\alpha\geq\beta (d+1)$, i.e. $N^{-\alpha+\beta(d+1)}\leq 1$, Gronwall's inequality implies that if $\beta\leq \frac{\theta_1-\alpha}{4\chi t}$ then
\begin{equation*}
\mathbb{E}(S(t))\leq C(\gamma)N^{-2\gamma}\exp\Big\{\frac{4\gamma\chi}{\theta_1-\alpha}\int_{0}^{t}(1+\ln N^{\beta}+l(s))\, \mathrm{d}s\Big\}\leq C(\gamma)N^{-2\gamma} N^{\frac{4\beta\chi}{\theta_1-\alpha}\gamma t}\leq C(\gamma)N^{-\gamma}.
\end{equation*}
The above process requires that $0<\beta<\min\{\frac{1}{2d}, \frac{\alpha}{d+1}, \frac{\frac12-\theta_{1}}{d-1},\frac{\theta_{1}-\alpha}{4\chi t}\}:=\beta_*$.
\end{proof}

\subsection{Strong $L^1$ convergence}
The second propagation of chaos result, which will be used in the proof of main theorem, follows the idea of combination of relative entropy and convergence in probability given in \cite{ChenHolzingerHuo2023}.
\begin{proposition}\label{k}
	Assume that the assumptions in Proposition \ref{Convergenceinprobability} hold. 
 Let $\rho^{N,\varepsilon}$ be the solution of \eqref{22}, which is the joint distribution of $X^{N,\varepsilon}_i$, $i=1,\cdots,N$, and $\rho^{N,\varepsilon;1}$ be its one particle marginal. Let $\rho$ be the solution of \eqref{1}, then we have
 \begin{equation}
 	\label{L1convergence} 
 	\|\rho^{N,\varepsilon;1}-\rho\|_{L^\infty(0,T;L^1(\mathbb{R}^d))}\leq\frac{C(t)}{N^\beta}, \ where\  0<\beta<\beta_*.
 \end{equation}
\end{proposition}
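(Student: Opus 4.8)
The plan is to interpose the intermediate PDE \eqref{17}, whose solution $\rho^\varepsilon$ is the law of one particle of the McKean--Vlasov system \eqref{interparticle}, and to write
\[
\|\rho^{N,\varepsilon;1}-\rho\|_{L^1(\mathbb R^d)}\ \le\ \|\rho^{N,\varepsilon;1}-\rho^\varepsilon\|_{L^1(\mathbb R^d)}+\|\rho^\varepsilon-\rho\|_{L^1(\mathbb R^d)}.
\]
With $\varepsilon=N^{-\beta}$ the second term is already $\le C\varepsilon=CN^{-\beta}$ by Lemma \ref{e}, so everything reduces to bounding $\|\rho^{N,\varepsilon;1}-\rho^\varepsilon\|_{L^1}$ by $CN^{-\beta}$. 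For that I would follow the scheme of \cite{ChenHolzingerHuo2023}: a relative-entropy estimate of the joint law $\rho^{N,\varepsilon}$ (solving \eqref{22}) against the product law $(\rho^\varepsilon)^{\otimes N}$, in which the error term is dealt with by \emph{transferring it onto the genuinely i.i.d.\ particles} $\overline X^\varepsilon_i$ of \eqref{interparticle} via the strong coupling of Proposition \ref{Convergenceinprobability}.

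\emph{Relative entropy inequality.} Set $H_N(t)=\frac1N\int_{\mathbb R^{Nd}}\rho^{N,\varepsilon}\log\!\big(\rho^{N,\varepsilon}/(\rho^\varepsilon)^{\otimes N}\big)\,dx$; since $\rho^{N,\varepsilon}|_{t=0}=\rho_0^{\otimes N}=(\rho^\varepsilon)^{\otimes N}|_{t=0}$ we have $H_N(0)=0$. Both densities solve Fokker--Planck equations with the same diffusion (and for fixed $N,\varepsilon$ the solution $\rho^{N,\varepsilon}$ of \eqref{22} is smooth, so the computation is legitimate), and the control drift $\nabla\widetilde\Phi_\varepsilon\ast f$ enters identically in the two systems, hence cancels in the entropy dissipation. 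Differentiating in time, integrating by parts and using Young's inequality to absorb the full Fisher information, one is left with the clean bound
\[
\frac{d}{dt}H_N(t)\ \le\ \frac{\chi^2}{2}\,\mathbb E_{\rho^{N,\varepsilon}}\big[\,|e_1(t)|^2\,\big],\qquad
e_1\ :=\ \frac1N\sum_{j=1}^N\nabla\widetilde\Phi_\varepsilon\big(X^{N,\varepsilon}_1-X^{N,\varepsilon}_j\big)-\nabla\widetilde\Phi_\varepsilon\ast\rho^\varepsilon\big(X^{N,\varepsilon}_1\big),
\]
the symmetry of $\rho^{N,\varepsilon}$ being used to reduce the sum over particles to the first coordinate. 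Because the cut-off force $\nabla\widetilde\Phi_\varepsilon$ is bounded for fixed $\varepsilon$, no logarithmic Sobolev inequality or large-deviation estimate is needed here; it suffices to estimate $\mathbb E_{\rho^{N,\varepsilon}}|e_1|^2$ directly.

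\emph{The core estimate, and the main obstacle.} The difficulty is that the expectation defining $\mathbb E_{\rho^{N,\varepsilon}}|e_1|^2$ is taken under the \emph{correlated} law $\rho^{N,\varepsilon}$, so Lemma \ref{j} does not apply as it stands. I would couple $(X^{N,\varepsilon}_i)$ with $(\overline X^\varepsilon_i)$ (same Brownian motions and initial data) as in Proposition \ref{Convergenceinprobability} and split $e_1=A+B+C$, with
\[
A=\frac1N\sum_j\!\big[\nabla\widetilde\Phi_\varepsilon(X^{N,\varepsilon}_1-X^{N,\varepsilon}_j)-\nabla\widetilde\Phi_\varepsilon(\overline X^\varepsilon_1-\overline X^\varepsilon_j)\big],\qquad
C=\nabla\widetilde\Phi_\varepsilon\ast\rho^\varepsilon(\overline X^\varepsilon_1)-\nabla\widetilde\Phi_\varepsilon\ast\rho^\varepsilon(X^{N,\varepsilon}_1),
\]
\[
B=\frac1N\sum_j\nabla\widetilde\Phi_\varepsilon(\overline X^\varepsilon_1-\overline X^\varepsilon_j)-\nabla\widetilde\Phi_\varepsilon\ast\rho^\varepsilon(\overline X^\varepsilon_1).
\]
On the good event $\mathcal A_\alpha^c$ one has $\max_i\sup_{[0,T]}|X^{N,\varepsilon}_i-\overline X^\varepsilon_i|<N^{-\alpha}$, so the mean value theorem together with $\|\nabla^2\widetilde\Phi_\varepsilon\|_{L^\infty}\le C\varepsilon^{-d}=CN^{\beta d}$ (and mass conservation of $\rho^\varepsilon$) gives $|A|+|C|\le CN^{\beta d-\alpha}$; the term $B$ now involves only the i.i.d.\ variables $\overline X^\varepsilon_j$ with common law $\rho^\varepsilon$, so Lemma \ref{j} with $U=\nabla\widetilde\Phi_\varepsilon$, $v=\rho^\varepsilon$ yields $|B|\le N^{-\theta}$ off an event $\mathcal B^N_\theta$ whose probability is $\le C(\tilde k)\,N^{2\tilde k(\theta-\frac12+\beta(d-1))+1}$. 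On the exceptional event $\mathcal A_\alpha\cup\mathcal B^N_\theta$ I would use only the crude pointwise bound $|e_1|^2\le C\|\nabla\widetilde\Phi_\varepsilon\|_{L^\infty}^2\le CN^{2\beta(d-1)}$ together with $\mathbb P(\mathcal A_\alpha\cup\mathcal B^N_\theta)\le C(\gamma)N^{-\gamma}$, coming from Proposition \ref{Convergenceinprobability} and a large choice of $\tilde k$, which makes that piece negligible. Integrating in time,
\[
H_N(T)\ \le\ C\,T\,\Big(N^{-2\min\{\alpha-\beta d,\ \theta\}}+C(\gamma)\,N^{2\beta(d-1)-\gamma}\Big).
\]
The real work is in this step: the singular growth $\|\nabla\widetilde\Phi_\varepsilon\|_{L^\infty}\sim\varepsilon^{1-d}$, $\|\nabla^2\widetilde\Phi_\varepsilon\|_{L^\infty}\sim\varepsilon^{-d}$ must be dominated both by the coupling scale $N^{-\alpha}$ and by the (stretched-small) probabilities of the exceptional events, which is exactly what forces $\varepsilon=N^{-\beta}$ with $\beta$ small.

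\emph{Conclusion.} For $\beta\in(0,\beta_*)$ one may choose $\alpha\ge\beta(d+1)$ and $\beta<\theta<\tfrac12-\beta(d-1)$ (both compatible with \eqref{conditiontheta} and with the constraints of Proposition \ref{Convergenceinprobability}), and then $\gamma,\tilde k$ large, so that $\min\{\alpha-\beta d,\theta\}\ge\beta$ and $H_N(T)\le CN^{-2\beta}$. Finally, by sub-additivity of the relative entropy over the product reference measure and the Csisz\'ar--Kullback--Pinsker inequality,
\[
\|\rho^{N,\varepsilon;1}-\rho^\varepsilon\|_{L^1(\mathbb R^d)}^2\ \le\ 2\,H\!\big(\rho^{N,\varepsilon;1}\,\big|\,\rho^\varepsilon\big)\ \le\ 2\,H_N(T)\ \le\ CN^{-2\beta},
\]
and combining with $\|\rho^\varepsilon-\rho\|_{L^1}\le C\varepsilon=CN^{-\beta}$ from Lemma \ref{e} gives \eqref{L1convergence}.
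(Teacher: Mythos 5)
Your proof is correct, but it differs from the paper's in one meaningful structural choice: you estimate the relative entropy $\mathcal H\big(\rho^{N,\varepsilon}\,|\,(\rho^\varepsilon)^{\otimes N}\big)$ against the tensor power of the \emph{intermediate} density $\rho^\varepsilon$ solving \eqref{17}, and then close with $\|\rho^\varepsilon-\rho\|_{L^1}\le C\varepsilon$ from Lemma~\ref{e}, whereas the paper works directly with $\mathcal H\big(\rho^{N,\varepsilon}\,|\,\rho^{\otimes N}\big)$, $\rho$ solving the unregularized equation~\eqref{1}. Your choice buys two simplifications that the paper has to pay for explicitly: (i) since the particle system~\eqref{100} and the McKean--Vlasov system~\eqref{interparticle} carry \emph{exactly the same} control drift $\chi\nabla\widetilde\Phi_\varepsilon\ast f$, that term cancels identically in the entropy dissipation, so you have no analogue of the paper's $M_2$ term $\|\nabla\Phi\ast f-\nabla\widetilde\Phi_\varepsilon\ast f\|_{L^\infty}^2$; (ii) the entropy error $e_1$ compares the empirical force directly with $\nabla\widetilde\Phi_\varepsilon\ast\rho^\varepsilon$, so there is no analogue of the paper's $M_{11}$ (the purely PDE-level discrepancy between $\nabla\Phi\ast\rho$ and $\nabla\widetilde\Phi_\varepsilon\ast\rho^\varepsilon$). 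Your $A,B,C$ are precisely the paper's $M_{14},M_{13},M_{12}$, with the same mechanism: couple with the i.i.d.\ $\overline X^\varepsilon_i$ on the good event of Proposition~\ref{Convergenceinprobability}, use $\|\nabla^2\widetilde\Phi_\varepsilon\|_{L^\infty}\lesssim N^{\beta d}$ plus the coupling scale $N^{-\alpha}$ for $A,C$, Lemma~\ref{j} for $B$, and the crude $L^\infty$ bound on the bad event, whose probability is pushed to $N^{-\gamma}$ by choosing $\tilde k$ large. The remaining subadditivity and Csisz\'ar--Kullback--Pinsker steps are identical. The one thing you glossed over is the uniformity in $t\in[0,T]$: the Gronwall-type integration actually bounds $\sup_{t\le T}H_N(t)$, not just $H_N(T)$, which is what the $L^\infty(0,T;L^1)$ norm in~\eqref{L1convergence} requires; this is harmless since your differential inequality holds for all $t$ with integrable right-hand side, but it should be stated. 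Overall the argument is a clean and slightly more economical variant of the paper's proof.
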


\begin{proof} By using super additivity property of relative entropy and Csisz\'ar-Kullback-Pinsker inequality, given in the attachment of \cite{ChenHolzingerHuo2023}, the estimate of \eqref{L1convergence} can be reduced in doing the following relative entropy estimate
	\begin{equation}\label{estrelativeentropy}
		\mathcal{H}(\rho^{N, \varepsilon}| \rho^{\otimes N}):=\frac{1}{N}\int_{\mathbb{R}^{dN}}\log \Big(\frac{\rho^{N, \varepsilon}}{\rho^{\otimes N}}\Big)\rho^{N, \varepsilon}\,  \mathrm{d}x_{1}\cdots\,  \mathrm{d}x_{N}\leq\frac{C(t)}{N^{2\beta}}.
	\end{equation}
By direct computation and using Young's inequality, we obtain
	\begin{align*}
			&\frac{ \mathrm{d}}{ \mathrm{d}t}\mathcal{H}(\rho^{N, \varepsilon}| \rho^{\otimes N})+\frac{1}{2N}\int_{\mathbb{R}^{dN}}\sum_{i=1}^{N}\Big|\nabla_{x_{i}}\log \Big(\frac{\rho^{N, \varepsilon}}{\rho^{\otimes N}}\Big)\Big|^2\rho^{N, \varepsilon}\,  \mathrm{d}x_{1}\cdots\,  \mathrm{d}x_{N}\\
			\leq&\chi\mathbb{E}\Big(\frac{1}{N}\sum_{i=1}^{N}\Big|\nabla\Phi\ast\rho(t, X_{i}^{N, \varepsilon})-\frac{1}{N}\sum_{j=1}^{N}\nabla\widetilde{\Phi}_{\varepsilon}(X_{i}^{N, \varepsilon}-X_{j}^{N, \varepsilon})\Big|^2\Big)\\
			&\qquad +\chi\mathbb{E}\Big(\frac{1}{N}\sum_{i=1}^{N}\big|\nabla\Phi\ast f(t,X_{i}^{N, \varepsilon})-\nabla\widetilde{\Phi}_{\varepsilon}\ast f(t,X_{i}^{N, \varepsilon})\big|^2\Big)\\
			:=&\chi M_{1}+\chi M_{2}.
		\end{align*}
The estimate for $M_2$ can be directly obtained by the error estimate $\|\nabla\Phi\ast f-\nabla\widetilde{\Phi}_\varepsilon\ast f\|_{L^\infty(\mathbb{R}^d)}$, namely, by using Lemma \ref{f} and \ref{g} we have
\begin{align*}
	M_2\leq \|\nabla\Phi\ast f-\nabla\widetilde{\Phi}_\varepsilon\ast f\|_{L^\infty(\mathbb{R}^d)}^2\leq \varepsilon^2 \|f\|_{W^{1,q}(\mathbb{R}^d)}^2\leq \varepsilon^2 l^2(t).
\end{align*}
Term $M_{1}$ reflects the mean-field limit estimate, where we need the convergence in probability result in Proposition \ref{Convergenceinprobability}. We insert the intermediate dynamic $\overline{X}_{i}^{\varepsilon}$ and use the i.i.d. property of it to proceed the following estimate 
\begin{align*}
	M_{1}\leq&\mathbb{E}\Big(\frac{1}{N}\sum_{i=1}^{N}\Big|\nabla\Phi\ast\rho(t, X_{i}^{N, \varepsilon})-\nabla\widetilde{\Phi}_{\varepsilon}\ast\rho^{\varepsilon}(t, X_{i}^{N, \varepsilon})\Big|^2\Big)\\
	&+\mathbb{E}\Big(\frac{1}{N}\sum_{i=1}^{N}\Big|\nabla\widetilde{\Phi}_{\varepsilon}\ast\rho^{\varepsilon}(t, X_{i}^{N, \varepsilon})-\nabla\widetilde{\Phi}_{\varepsilon}\ast\rho^{\varepsilon}(t, \overline{X}_{i}^{\varepsilon})\Big|^2\Big)\\
	&+\mathbb{E}\Big(\frac{1}{N}\sum_{i=1}^{N}\Big|\nabla\widetilde{\Phi}_{\varepsilon}\ast\rho^{\varepsilon}(t, \overline{X}_{i}^{\varepsilon})-\frac{1}{N}\sum_{j=1}^{N}\nabla\widetilde{\Phi}_{\varepsilon}(\overline{X}_{i}^{\varepsilon}-\overline{X}_{j}^{\varepsilon})\Big|^2\Big)\\
	&+\mathbb{E}\Big(\frac{1}{N}\sum_{i=1}^{N}\Big|\frac{1}{N}\sum_{j=1}^{N}\big(\nabla\widetilde{\Phi}_{\varepsilon}(\overline{X}_{i}^{\varepsilon}-\overline{X}_{j}^{\varepsilon})-\nabla\widetilde{\Phi}_{\varepsilon}(X_{i}^{N, \varepsilon}-X_{j}^{N, \varepsilon})\big)\Big|^2\Big):=\sum_{i=1}^{4}M_{1i}.
\end{align*}
These four terms have typical structures, $M_{11}$ can be estimated by pure PDE analysis, $M_{12}$ and $M_{14}$ share the structure of mean-field limit, and $M_{13}$ will be estimated by the law of large number since it is only for i.i.d. random variables. In the following we discuss them separately.

For $M_{11}$, by using Lemma \ref{f}, \ref{phi1} and \ref{g}, and the estimate for $\rho-\rho^\varepsilon$ in Lemma \ref{e}, we have
\begin{align*}
			M_{11}\leq&\mathbb{E}\Big(\frac{1}{N}\sum_{i=1}^{N}\Big|\nabla\Phi\ast\rho(t, X_{i}^{N, \varepsilon})-\nabla\Phi_{\varepsilon}\ast\rho (t, X_{i}^{N, \varepsilon})\Big|^2\Big)\\
			&+\mathbb{E}\Big(\frac{1}{N}\sum_{i=1}^{N}\Big|\nabla\Phi_{\varepsilon}\ast\rho(t, X_{i}^{N, \varepsilon})-\nabla\widetilde{\Phi}_{\varepsilon}\ast\rho(t, X_{i}^{N, \varepsilon})\Big|^2\Big)\\
			&+\mathbb{E}\Big(\frac{1}{N}\sum_{i=1}^{N}\Big|\nabla\widetilde{\Phi}_{\varepsilon}\ast\rho(t, X_{i}^{N, \varepsilon})-\nabla\widetilde{\Phi}_{\varepsilon}\ast\rho^{\varepsilon}(t, X_{i}^{N, \varepsilon})\Big|^2\Big)\\
			\leq & \|\nabla(\Phi-\Phi_{\varepsilon})\ast\rho\|^2_{L^\infty(\mathbb{R}^d)} + \|\nabla(\Phi_{\varepsilon}-\widetilde{\Phi}_\varepsilon)\ast\rho\|^2_{L^\infty(\mathbb{R}^d)}+\|\nabla\widetilde{\Phi}_{\varepsilon}\ast(\rho-\rho^\varepsilon)\|^2_{L^\infty(\mathbb{R}^d)} \\
			\leq &2\varepsilon^2 \|\rho\|^2_{L^\infty(0,T;W^{1,q}(\mathbb{R}^d))}+\|\rho-\rho^{\varepsilon}\|^2_{L^\infty(0,T;L^1(\mathbb{R}^d)\cap L^\infty(\mathbb{R}^d))}\leq C\varepsilon^2=\frac{C}{N^{2\beta}}.
	\end{align*}
The estimates for mean-field limit terms $M_{12}$ and $M_{14}$ need application of the result in Proposition \ref{Convergenceinprobability}. By splitting $\Omega=\mathcal{A}_\alpha\cup \mathcal{A}_\alpha^c$, we have that for $\beta\in (0,\beta_*)$ and $\gamma=2(\alpha-\beta)$ it holds
	\begin{align*}
			M_{12}+M_{14}\leq&C\frac{\|\nabla^2\widetilde{\Phi}_{\varepsilon}\|_{L^{\infty}(\mathbb{R}^d)}^2}{N^{2\alpha}}+C(\gamma)\frac{\|\nabla\widetilde{\Phi}_{\varepsilon}\|_{L^{\infty}(\mathbb{R}^d)}^2}{N^{\gamma}}\leq C(N^{2(\beta d-\alpha)}+N^{2\beta(d-1)-\gamma})=N^{2(\beta d-\alpha)}.
	\end{align*}
The term $M_{13}$ can be estimated by directly applying the law of large number, namely
	\begin{equation*}
		M_{13}\leq C\frac{\|\nabla\widetilde{\Phi}_{\varepsilon}\|_{L^{\infty}(\mathbb{R}^d)}^2}{N}\leq C\frac{\varepsilon^{2(1-d)}}{N}=CN^{2\beta(d-1)-1}.
	\end{equation*}
Therefore, for the relative entropy estimate we obtain
	\begin{equation*}
		\frac{\mathrm{d}}{\mathrm{d}t}\mathcal{H}(\rho^{N, \varepsilon}| \rho^{\otimes N})
			\leq C\chi N^{-2\beta} (l^2(t)+1)+C\chi (N^{2(\beta d-\alpha)}+N^{2\beta(d-1)-1}).
	\end{equation*}
Hence considering the fact that for $\beta_*\leq\frac{\alpha}{d+1}$ and $\beta_{*}\leq\frac{1}{2d}$, we arrive at the result by integrating in time, i.e.
$$
N^{2(\beta d-\alpha)}+N^{2\beta(d-1)-1}\leq N^{-2\beta}.
$$
\end{proof}

\section{The proof of Theorem \ref{mainthm}} \label{proofmainthm}

This section is devoted to prove the main theorem. We first give a list of convergence results for cost functional on both micro- and macro- levels, which help in completing the $\Gamma$-convergence of cost functional.
\begin{lemma}\label{lemmaJNtoJ}
Let the assumptions in Theorem \ref{mainthm} hold, then for any given $\widetilde{f}_{N}\in {\mathbb{X}}$, it holds
\begin{equation*}
\lim_{N\to\infty}\big(J_{N}(\mu_N[\widetilde{f}_N],\widetilde{f}_{N})-J(\rho[\widetilde{f}_N],\widetilde{f}_{N})\big)=0.
\end{equation*}
\end{lemma}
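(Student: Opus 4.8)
The plan is to split the difference into a fidelity part and a control–cost part and to estimate each \emph{uniformly over $X$}, so that the particular sequence $\widetilde f_N$ is irrelevant. Writing $\varepsilon=N^{-\beta}$, $\mu_N=\mu_N[\widetilde f_N]$, $\rho=\rho[\widetilde f_N]$, $\rho^{N,\varepsilon;1}=\rho^{N,\varepsilon;1}[\widetilde f_N]$ and using the representation \eqref{NparticleJ} together with \eqref{28},
\begin{equation*}
	J_{N}(\mu_N[\widetilde f_N],\widetilde f_N)-J(\rho[\widetilde f_N],\widetilde f_N)=\underbrace{\int_0^T\!\big(\mathbb{E}\|j_\varepsilon\ast\mu_N-z\|_{L^p}-\|\rho-z\|_{L^p}\big)\,dt}_{=:D_N^{(1)}}+\underbrace{\int_0^T\!\!\int_{\mathbb{R}^d}\widetilde f_N\,(\rho^{N,\varepsilon;1}-\rho)\,dx\,dt}_{=:D_N^{(2)}}.
\end{equation*}

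For $D_N^{(2)}$ I would use Morrey's inequality $\|\widetilde f_N(t)\|_{L^\infty}\le C\|\widetilde f_N(t)\|_{W^{1,q}}\le C\,l(t)$ (recall $q>d$) and the strong $L^1$ bound of Proposition \ref{k}, whose constant is uniform over $X$ because the PDE estimates of Proposition \ref{a} are (this is exactly what condition \eqref{rho0} buys); then
\begin{equation*}
	|D_N^{(2)}|\le\|\rho^{N,\varepsilon;1}-\rho\|_{L^\infty(0,T;L^1(\mathbb{R}^d))}\int_0^T\|\widetilde f_N(t)\|_{L^\infty}\,dt\le \frac{C}{N^\beta}\,T^{1-\frac1r}\,\|l\|_{L^r(0,T)}\longrightarrow 0.
\end{equation*}

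For $D_N^{(1)}$ the triangle inequality in $L^p$ gives $|D_N^{(1)}|\le\int_0^T\mathbb{E}\|j_\varepsilon\ast\mu_N-\rho\|_{L^p}\,dt$, and I would insert the intermediate McKean--Vlasov particles $\overline X_i^\varepsilon$ of \eqref{interparticle}, with empirical measure $\overline\mu_N$ and common law $\rho^\varepsilon$ (the solution of \eqref{17}), splitting
\begin{equation*}
	\|j_\varepsilon\ast\mu_N-\rho\|_{L^p}\le\|j_\varepsilon\ast(\mu_N-\overline\mu_N)\|_{L^p}+\|j_\varepsilon\ast(\overline\mu_N-\rho^\varepsilon)\|_{L^p}+\|j_\varepsilon\ast\rho^\varepsilon-\rho^\varepsilon\|_{L^p}+\|\rho^\varepsilon-\rho\|_{L^p}.
\end{equation*}
The fourth term is $O(\varepsilon)$ by Lemma \ref{e} and interpolation between $L^1$ and $L^\infty$; the third is handled by the standard mollifier estimate and interpolation, using the $\varepsilon$-uniform bounds $\rho^\varepsilon\in L^\infty(0,T;L^\infty\cap W^{1,q})$ coming from the $\varepsilon$-uniform analogues of Propositions \ref{PDEest}--\ref{a}. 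For the second term I would run the law of large numbers at the $L^p$ level: since the $\overline X_i^\varepsilon$ are i.i.d.\ with law $\rho^\varepsilon$, Fubini together with a Rosenthal/Marcinkiewicz--Zygmund moment inequality reduces $\mathbb{E}\|j_\varepsilon\ast(\overline\mu_N-\rho^\varepsilon)\|_{L^p}^p$ to combinations of $N^{-p/2}$ and $N^{-(p-1)}$ times $\|j_\varepsilon\|_{L^p}^p=C\varepsilon^{d-dp}=CN^{\beta d(p-1)}$, which is $o(1)$ as soon as $\beta<\tfrac{1}{2d}$. For the first term I would use the trajectory estimate of Proposition \ref{Convergenceinprobability}: on $\mathcal A_\alpha^c$ one has $\max_i|X_i^{N,\varepsilon}-\overline X_i^\varepsilon|<N^{-\alpha}$, so the mean value theorem together with $\|j_\varepsilon(\cdot-y)-j_\varepsilon(\cdot-y')\|_{L^p}\le C\|\nabla j_\varepsilon\|_{L^\infty}\,|y-y'|\,\varepsilon^{d/p}$ (the difference being supported on a set of measure $O(\varepsilon^d)$) and $\|\nabla j_\varepsilon\|_{L^\infty}=C\varepsilon^{-d-1}$ yields a bound $CN^{\beta(d+1-d/p)-\alpha}$, while on $\mathcal A_\alpha$ one uses the crude inequality $\|j_\varepsilon\ast\nu\|_{L^p}\le\|j_\varepsilon\|_{L^p}$ for probability measures $\nu$ and $\mathbb{P}(\mathcal A_\alpha)\le C(\gamma)N^{-\gamma}$ with $\gamma$ taken large. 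Integrating in time and intersecting all the smallness requirements on $\beta$ (shrinking $\beta_*$ if necessary) gives $D_N^{(1)}\to 0$, uniformly over $X$.

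I expect the main obstacle to be exactly the jump from the $L^1$-type convergence provided by the relative-entropy estimate of Proposition \ref{k} to the $L^p$-convergence of the \emph{smoothed} empirical measure needed here: one must carry out the law of large numbers at the $L^p$ level and balance the mollifier blow-ups $\|\nabla j_\varepsilon\|_{L^\infty}\sim\varepsilon^{-d-1}$ and $\|j_\varepsilon\|_{L^p}\sim\varepsilon^{-d(1-1/p)}$ against the stochastic rates $N^{-\alpha}$, $N^{-\gamma}$, $N^{-1/2}$, which forces $\beta$ below a further explicit threshold. A secondary technical point is to keep every constant genuinely independent of the $N$-dependent control $\widetilde f_N$, since that uniformity is what makes the convergence survive the $\Gamma$-convergence argument of Section \ref{proofmainthm}.
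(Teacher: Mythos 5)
Your decomposition of the difference into the control--cost part $D_N^{(2)}$ and the fidelity part $D_N^{(1)}$, the treatment of $D_N^{(2)}$ via Morrey and Proposition~\ref{k}, and the four-term splitting of $D_N^{(1)}$ through the intermediate McKean--Vlasov dynamics (handled respectively by the convergence-in-probability result, a law-of-large-numbers moment estimate, a mollifier/H\"older-continuity bound, and Lemma~\ref{e}) match the paper's proof step by step, with the same $\beta$-threshold constraints emerging at the end. The only cosmetic deviation is that for the law-of-large-numbers term you invoke a Rosenthal/Marcinkiewicz--Zygmund inequality directly, whereas the paper first interpolates $L^p$ into $L^{2m}$ with $2m$ an even integer and runs an explicit moment combinatorics argument; both are standard realizations of the same moment bound and yield the same condition $\beta<\tfrac{1}{2d}$.
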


\begin{proof}
According to the definitions of $J_{N}$ and $J$, we have
\begin{equation*}
\begin{split}
&\Big|J_{N}(\mu_N[\widetilde{f}_N],\widetilde{f}_{N})-J(\rho[\widetilde{f}_N],\widetilde{f}_{N})\Big|\\
\leq&\int^T_0\Big|\mathbb{E}(\|j_{\varepsilon}\ast\mu_{N}[\widetilde{f}_N]-z\|_{L^p(\mathbb{R}^d)})-\|\rho[\widetilde{f}_{N}]-z\|_{L^p(\mathbb{R}^d)}\Big| \mathrm{d}t\\
&+\Big|\int_{0}^{T}\int_{\mathbb{R}^d}\widetilde{f}_{N}\rho^{N,\varepsilon; 1}[\widetilde{f}_{N}]\,  \mathrm{d}x\,  \mathrm{d}t-\int_{0}^{T}\int_{\mathbb{R}^d}\widetilde{f}_N\rho[\widetilde{f}_{N}]\,  \mathrm{d}x\,  \mathrm{d}t\Big|\\
:=&A_{1}+A_{2}.
\end{split}
\end{equation*}
The nonlinear term $A_{2}\rightarrow 0$ can be proved by a direct application of the $L^1$ strong convergence result in Proposition \ref{k}, namely
\begin{align*}
A_{2}\leq&\int_{0}^{T}\int_{\mathbb{R}^d}\Big|\widetilde{f}_{N}\rho^{N,\varepsilon; 1}[\widetilde{f}_{N}]-\widetilde{f}_{N}\rho[\widetilde{f}_{N}]\Big|\,  \mathrm{d}x\,  \mathrm{d}t\\
\leq& \|\widetilde{f}_{N}\|_{L^{1}(0,T;L^{\infty}(\mathbb{R}^d))}\|\rho^{N,\varepsilon; 1}[\widetilde{f}_{N}]-\rho[\widetilde{f}_{N}]\|_{L^\infty(0,T;L^1(\mathbb{R}^d))}\leq C(T)\|l\|_{L^1(0,T)}N^{-\beta}\to 0.
\end{align*}
To show the convergence of $A_1$, we have to carefully split it into several terms 
\begin{align*}
		A_{1}\leq&\int^T_0\mathbb{E}\Big(\|j_{\varepsilon}\ast\mu_{N}[\widetilde{f}_{N}]-j_{\varepsilon}\ast\overline{\mu}_{N}[\widetilde{f}_{N}]\|_{L^p(\mathbb{R}^d)}\Big) \mathrm{d}t\\
		&+\int^T_0\mathbb{E}\Big(\|j_{\varepsilon}\ast\overline{\mu}_{N}[\widetilde{f}_{N}]-j_\varepsilon\ast\rho^\varepsilon[\widetilde{f}_{N}]\|_{L^p(\mathbb{R}^d)}\Big) \mathrm{d}t\\
		& + \int^T_0\|j_{\varepsilon}\ast\rho^{\varepsilon}[\widetilde{f}_{N}]-\rho^{\varepsilon}[\widetilde{f}_{N}]\|_{L^p(\mathbb{R}^d)} \mathrm{d}t+\int^T_0\|\rho^{\varepsilon}[\widetilde{f}_{N}]-\rho[\widetilde{f}_{N}]\|_{L^p(\mathbb{R}^d)} \mathrm{d}t:=\sum_{i=1}^{4}A_{1i},
\end{align*}  
where $\overline{\mu}_{N}[\widetilde{f}_{N}]$ denotes the empirical measure of $\overline{X}^\varepsilon_i[\widetilde{f}_{N}]$, $i=1,\cdots,N$. These terms have different structures, $A_{11}$ has the mean-field limit structure, $A_{12}$ has the law of large number structure, $A_{13}$ is simply the convolution approximation, and $A_{14}$ is the approximation of PDE solution. Therefore we handle them separately. By Lemma \ref{e}, \ref{b10} and Proposition \ref{bca}, we have
\begin{align*}
A_{13}\leq &2\int^T_0\Big\|j_{\varepsilon}\ast\rho^{\varepsilon}[\widetilde{f}_{N}]-\rho^{\varepsilon}[\widetilde{f}_{N}]\Big\|^{\frac{p-1}{p}}_{L^\infty(\mathbb{R}^d)} \mathrm{d}t\\
\leq & 2T^{\frac{1}{p}}
\int_{0}^{T}\esssup_{x\in\mathbb{R}^d}\Big|\int_{\mathbb{R}^d}j_{\varepsilon}(x-y)\rho^{\varepsilon}[\widetilde{f}_{N}](y)\,  \mathrm{d}y-\rho^{\varepsilon}[\widetilde{f}_{N}](x)\Big|\,  \mathrm{d}t\\
\leq & 2T^{\frac{1}{p}}
\int_{0}^{T}\esssup_{x\in\mathbb{R}^d}\int_{\mathbb{R}^d}j_{\varepsilon}(x-y)\Big|\rho^{\varepsilon}[\widetilde{f}_{N}](y)\, -\rho^{\varepsilon}[\widetilde{f}_{N}](x)\Big| \mathrm{d}y\,  \mathrm{d}t\\
\leq & 2T^{\frac{1}{p}}
\int_{0}^{T}\esssup_{x\in\mathbb{R}^d}\int_{\mathbb{R}^d}j_{\varepsilon}(x-y)\Big|\rho^{\varepsilon}[\widetilde{f}_{N}](y)\, -\rho^{\varepsilon}[\widetilde{f}_{N}](x)\Big|\frac{|x-y|^{1-\frac{d}{q}}}{|x-y|^{1-\frac{d}{q}}} \mathrm{d}y\,  \mathrm{d}t\\
\leq&2\varepsilon^{1-\frac{d}{q}}\|\rho^{\varepsilon}[\widetilde{f}_{N}]\|_{L^\infty(0,T;C^{1-\frac{d}{q}}(\mathbb{R}^d))}\|j_{\varepsilon}\|_{L^{\infty}(0, T; L^1(\mathbb{R}^d))}T^{1+\frac{1}{p}}\\
\leq & 2\varepsilon^{1-\frac{d}{q}}\|\rho^{\varepsilon}[\widetilde{f}_{N}]\|_{L^\infty(0,T;W^{1, q}(\mathbb{R}^d))}T^{1+\frac{1}{p}}\leq C N^{-\beta(1-\frac{d}{q})}\rightarrow 0.
\end{align*}
The last term $A_{14}$ is directly given by the PDE solution theorem in Lemma \ref{e},
\begin{equation*}
A_{14}=\int^T_0\|\rho^{\varepsilon}[\widetilde{f}_{N}]-\rho[\widetilde{f}_{N}]\|_{L^p(\mathbb{R}^d)} \mathrm{d}t\leq\|\rho^{\varepsilon}[\widetilde{f}_{N}]-\rho[\widetilde{f}_{N}]\|_{L^\infty(0,T;L^1(\mathbb{R}^d)\cap L^\infty(\mathbb{R}^d))}T\leq C\varepsilon\rightarrow 0.
\end{equation*}
For $A_{11}$ we utilize the same idea as in proving the strong $L^1$ convergence in Proposition \ref{k}, namely we decompose $\Omega=\mathcal{A}_\alpha\cup \mathcal{A}_\alpha^c$. By taking $\gamma=\alpha-\beta$ and notice that $\beta<\beta_*$, we have
\begin{align*}
	A_{11}\leq &T^\frac{1}{p}\int^T_0\Big|\mathbb{E}\Big(\Big\|j_{\varepsilon}\ast\mu_{N}[\widetilde{f}_{N}]-j_{\varepsilon}\ast\overline{\mu}_{N}[\widetilde{f}_{N}]\Big\|^\frac{p-1}{p}_{L^\infty(\mathbb{R}^d)}\Big)\Big| \mathrm{d}t\\
	\leq&C(T)\int_{0}^{T}\mathbb{E}\Big((\mathbb{I}_{\mathcal{A}_{\alpha}}+\mathbb{I}_{\mathcal{A}_{\alpha}^c})\Big\|\frac{1}{N}\sum_{i=1}^{N}(j_{\varepsilon}(\cdot-X_{i}^{N, \varepsilon}[\widetilde{f}_{N}])-j_{\varepsilon}(\cdot-\overline{X}_{i}^{\varepsilon}[\widetilde{f}_{N}]))\Big\|_{L^{\infty}}^{\frac{p-1}{p}}\Big) \mathrm{d}t\\
	\leq & C(T)\Big(\big\|\nabla j_{\varepsilon}\big\|_{L^{\infty}(\mathbb{R}^d)}^{\frac{p-1}{p}}N^{-\frac{(p-1)\alpha}{p}}+\|j_{\varepsilon}\|_{L^{\infty}(\mathbb{R}^d)}^\frac{p-1}{p}N^{-\frac{(p-1)\gamma}{p}}\Big)\\
	\leq&C(T)\Big(N^{\frac{p-1}{p}((d+1)\beta-\alpha)}+N^{\frac{p-1}{p}(d\beta-\gamma)}\Big)\leq C(T)N^{\frac{p-1}{p}((d+1)\beta-\alpha)}\rightarrow 0.
\end{align*} 
We do first interpolation inequality for $A_{12}$ and then use the principle of the law of large number. Namely 
for $p\geq 2$,
$$
\|g\|_{L^p(\mathbb{R}^d)}\leq \|g\|_{L^{2m}(\mathbb{R}^d)}^\eta \|g\|_{L^1(\mathbb{R}^d)}^{1-\eta}, \quad m=\lceil \frac{p}{2}\rceil,\, \eta=\frac{2m(p-1)}{(2m-1)p}
$$
and 
$$
\|j_{\varepsilon}\ast\overline{\mu}_{N}[\widetilde{f}_{N}]-j_\varepsilon\ast\rho^\varepsilon[\widetilde{f}_{N}]\|_{L^1(\mathbb{R}^d)}\leq 2, \quad \forall \omega\in\Omega,
$$
we have by using H\"{o}lder's inequality,
\begin{align*}
	A_{12}	\leq & 2\int^T_0\mathbb{E}\Big(\Big\|j_{\varepsilon}\ast\overline{\mu}_{N}[\widetilde{f}_{N}]-j_\varepsilon\ast\rho^\varepsilon[\widetilde{f}_{N}]\Big\|^\frac{2m(p-1)}{(2m-1)p}_{L^{2m}(\mathbb{R}^d)}\Big) \mathrm{d}t\\ 
	\leq &\int^T_0\bigg(\int_{\mathbb{R}^d}\mathbb{E}\Big(\Big|j_{\varepsilon}\ast\overline{\mu}_{N}[\widetilde{f}_{N}](x)-j_\varepsilon\ast\rho^\varepsilon[\widetilde{f}_{N}](t, x)\Big|^{2m}\Big) \mathrm{d}x\bigg)^{\frac{p-1}{(2m-1)p}} \mathrm{d}t.
\end{align*} 
Now we use the strategy of the law of large number and with notation 
$$
\tilde h(t,x,\overline{X}^\varepsilon_i) =j_\varepsilon(x-\overline{X}^\varepsilon_i[\widetilde{f}_{N}])-j_\varepsilon\ast \rho^\varepsilon[\widetilde{f}_{N}](t,x),
$$
we obtain
\begin{align*}
A_{12}	\leq & \int^T_0\bigg(\int_{\mathbb{R}^d}\mathbb{E}\Big(\Big|\frac{1}{N}\sum^N_{i=1}\tilde h(t,x,\overline{X}^\varepsilon_i)\Big|^{2m}\Big) \mathrm{d}x\bigg)^{\frac{p-1}{(2m-1)p}} \mathrm{d}t\\
= & \int^T_0\bigg(\frac{1}{N^{2m}}\int_{\mathbb{R}^d}\mathbb{E}\Big(\Big|\sum^N_{i=1}\tilde h(t,x,\overline{X}^\varepsilon_i)\Big|^{2m}\Big) \mathrm{d}x\bigg)^{\frac{p-1}{(2m-1)p}} \mathrm{d}t.
\end{align*} 
Notice that $\forall x\in\mathbb{R}^d$, it holds
\begin{align*}
\mathbb{E}(\tilde{h}(t,x,\overline{X}^\varepsilon_i))=\int_{\mathbb{R}^d}\tilde{h}(t,x,y)\rho^\varepsilon[\widetilde{f}_N](t, y) \mathrm{d}y=0.
\end{align*}
In all the terms in the summation, if $\tilde{h}(t,x,\overline{X}^\varepsilon_i)$ appears only once, due to the fact that $\overline{X}^\varepsilon_j$, $j=1,\cdots, N$, are independent, we have
\begin{align*}
&\int_{\mathbb{R}^d}\mathbb{E}\Big(\tilde h(t,x,\overline{X}^\varepsilon_i)\prod^{2m-1}_{n=1, j_n\neq i}\tilde h(t,x,\overline{X}^\varepsilon_{j_n})\Big) \mathrm{d}x\\
=&\int_{\mathbb{R}^d}\mathbb{E}\Big(\tilde h(t,x,\overline{X}^\varepsilon_i)\Big)\mathbb{E}\Big(\prod^{2m-1}_{n=1, j_n\neq i}\tilde h(t,x,\overline{X}^\varepsilon_{j_n})\Big) \mathrm{d}x =0.
\end{align*}
By directly combination arguments, the number of terms where all the $\tilde{h}(t,x,\overline{X}^\varepsilon_i)$'s appear at least twice can be bounded by $C(m)N^m$. In addition, all these terms can be estimated by
\begin{align*}
&\int_{\mathbb{R}^d}\mathbb{E}\Big(\prod^{2m}_{n=1, j_n}\tilde h(t,x,\overline{X}^\varepsilon_{j_n})\Big) \mathrm{d}x\\
\leq &\sup_{(t,x,y)\in (0,T)\times\mathbb{R}^{2d}}|\tilde{h}(t,x,y)|^{2m-1}\int_{\mathbb{R}^d}\mathbb{E}\Big(\Big|\tilde h(t,x,\overline{X}^\varepsilon_{j_1})\Big|\Big) \mathrm{d}x\\
\leq &\|j_\varepsilon\|_{L^\infty(\mathbb{R}^d)}^{2m-1}\int_{\mathbb{R}^d}\int_{\mathbb{R}^d}\Big|j_\varepsilon(x-y)-j_\varepsilon\ast\rho^\varepsilon[\widetilde{f}_N](t,x)\Big|\rho^\varepsilon[\widetilde{f}_N](t,y) \mathrm{d}y \mathrm{d}x
\leq 2N^{\beta d(2m-1)}.
\end{align*}
Therefore the estimate for $A_{12}$ by the above discussion can be summarized into, for $\beta<\beta^*\leq \frac{1}{2d}$,
\begin{align*}
A_{12}	\leq & C(m)T\bigg(\frac{N^{\beta d(2m-1)}}{N^{m}}\bigg)^{\frac{p-1}{(2m-1)p}}\leq C(p)TN^{-\frac{\beta d(p-1)}{(2m-1)p}}\rightarrow 0.
\end{align*}
Hence we obtain the result 
\begin{equation*}
\Big|(J_{N}(\mu_N[\widetilde{f}_N],\widetilde{f}_{N})-J(\rho[\widetilde{f}_N],\widetilde{f}_{N}))\Big|\rightarrow 0, \quad \mbox{as } N\rightarrow \infty.
\end{equation*}
\end{proof}

\begin{lemma}\label{lemweakcompact}
For a sequence $f_{N}\in {\mathbb{X}}$, there exists a convergent subsequence $f_{N_k}\rightharpoonup f$ in ${\mathbb{X}}$ such that the corresponding solution $\rho[f_{N_k}]$ of \eqref{16} converges to $\rho[f]$ in the following sense
\begin{equation*}
\rho[f_{N_{k}}]\rightharpoonup\rho[f]\ \text{in}\ L^{1}(0, T; L^p(\mathbb{R}^d))
\end{equation*}
and
\begin{equation*}
\rho[f_{N_{k}}]\to\rho[f]\ \text{in}\ L^2(0, T; L^1(\mathbb{R}^d)).
\end{equation*}
\end{lemma}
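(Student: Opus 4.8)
The plan is to combine the weak compactness of the control space $X$ with the uniform-in-$N$ a priori bounds for $\rho[f_N]$ obtained in Section~\ref{PDEpart}, and then to identify the limit by passing to the limit in the weak formulation of \eqref{16}. Since $r>2$ and $q>d$, the space $L^{r}(0,T;W^{1,q}(\mathbb{R}^d))$ is reflexive, and $X$ is a bounded, convex, strongly closed (hence weakly closed) subset of it; therefore $X$ is weakly sequentially compact and, passing to a subsequence, $f_{N_k}\rightharpoonup f$ in $X$.

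Next I would collect the uniform estimates. Because the smallness condition \eqref{C0condition} holds, Proposition~\ref{a} together with Lemmas~\ref{b1}, \ref{a1}, \ref{b}, \ref{d} and \ref{c} provide constants depending only on $T$, the initial datum $\rho_0$ and $\|l\|_{L^r(0,T)}$, but \emph{not} on $N$, such that
\begin{equation*}
\|\rho[f_{N}]\|_{L^\infty(0,T;L^1\cap L^\infty(\mathbb{R}^d))}+\|\nabla\rho[f_{N}]\|_{L^\infty(0,T;L^2\cap L^q(\mathbb{R}^d))}+\|\Delta\rho[f_{N}]\|_{L^2(0,T;L^2(\mathbb{R}^d))}\le C.
\end{equation*}
Multiplying \eqref{16} by $|x|^2$, using that $\|\nabla\Phi\ast(\rho[f_N]-f_N)\|_{L^\infty(\mathbb{R}^d)}$ is bounded uniformly in $N$ (by the Riesz potential estimate, since $\rho[f_N]-f_N$ is bounded in $L^\infty(0,T;L^1\cap L^\infty)$) and Gronwall's inequality, one gets a uniform bound on the second moment $\int_{\mathbb{R}^d}|x|^2\rho[f_N]\,\mathrm{d}x$. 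Finally, from the equation, $\partial_t\rho[f_N]$ is bounded in $L^2(0,T;H^{-1}(\mathbb{R}^d))$.

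From these bounds I extract the two limits. Since $\rho[f_N]$ is bounded in $L^\infty(0,T;L^p(\mathbb{R}^d))$ for every $p\in[1,\infty]$, it is in particular bounded in the reflexive space $L^{r'}(0,T;L^p(\mathbb{R}^d))$ (recall $p\in[2,\infty)$ and $r'\in(1,2)$), so along a subsequence $\rho[f_{N_k}]\rightharpoonup\widetilde\rho$ weakly there. Moreover $\rho[f_{N_k}]$ is bounded in $L^2(0,T;H^1(\mathbb{R}^d))$, $\partial_t\rho[f_{N_k}]$ is bounded in $L^2(0,T;H^{-1}(\mathbb{R}^d))$, and the uniform second moment bound furnishes the tightness needed for the compact embedding of Lemma~\ref{compactembedding}; the Aubin--Lions lemma then yields, after a further extraction, $\rho[f_{N_k}]\to\widetilde\rho$ strongly in $L^2(0,T;L^2(\mathbb{R}^d))$, and the two limits coincide.

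It remains to identify $\widetilde\rho=\rho[f]$ by passing to the limit in the weak formulation of \eqref{16}. The linear terms pass immediately using the weak convergence in $L^2(0,T;H^1)$. For the nonlinear term we test against $\nabla\psi$ with $\psi\in C_c^\infty((0,T)\times\mathbb{R}^d)$ and split $\nabla\Phi\ast(\rho[f_{N_k}]-f_{N_k})=\nabla\Phi\ast\rho[f_{N_k}]-\nabla\Phi\ast f_{N_k}$. Convolution with the fixed Riesz kernel $\nabla\Phi$ is a bounded operator from $L^q(\mathbb{R}^d)$ to $L^\infty(\mathbb{R}^d)$ for $q>d$, maps the strongly convergent sequence $\rho[f_{N_k}]\to\widetilde\rho$ in $L^2(0,T;L^2)$ to one that still converges strongly, and maps the weakly convergent $f_{N_k}\rightharpoonup f$ to a sequence converging weakly-$*$ in $L^{r}(0,T;L^\infty(\mathbb{R}^d))$ (tested against $C_c^\infty$, using that the adjoint of $\nabla\Phi\ast\,\cdot$ maps $C_c^\infty$ into $L^{q'}$ precisely when $q>d$). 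Since $\rho[f_{N_k}]\nabla\psi\to\widetilde\rho\nabla\psi$ strongly in $L^{r'}(0,T;L^1(\mathbb{R}^d))$, the product of a strongly convergent and a weakly-$*$ convergent sequence converges, so the nonlinear term passes to the limit and $\widetilde\rho$ is the weak solution of \eqref{16} with control $f$ and datum $\rho_0$; by the uniqueness part of Proposition~\ref{PDEest} this is exactly $\rho[f]$. The main obstacle is precisely this last passage to the limit: the controls $f_{N_k}$ converge only weakly, so it is the strong $L^2(0,T;L^2)$-compactness of $\rho[f_{N_k}]$ (hence the tightness estimate) together with the smoothing property of the Riesz convolution that make the quadratic aggregation term converge.
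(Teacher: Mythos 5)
The paper does not actually write out a proof of this lemma; it only remarks that the estimates of Section~\ref{PDEpart} together with the Aubin--Lions lemma give the result. Your outline (weak compactness of $X$, uniform-in-$N$ bounds from Proposition~\ref{a} and Lemmas~\ref{b1}--\ref{c}, second-moment estimate, $L^2H^{-1}$ bound on $\partial_t\rho$, Aubin--Lions via Lemma~\ref{compactembedding}, then identification of the limit in the weak formulation) is exactly the route the paper gestures at, and you supply the details the paper omits.

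However, the one non-routine step -- passing to the limit in the term $\rho[f_{N_k}]\,\nabla\Phi\ast f_{N_k}$ -- contains an incorrect claim. You assert that the adjoint of $\nabla\Phi\ast\cdot$ maps $C_c^\infty(\mathbb{R}^d)$ into $L^{q'}(\mathbb{R}^d)$ ``precisely when $q>d$,'' and use this to transport the weak convergence of $f_{N_k}$ through the Riesz convolution. This is the wrong range. For $\psi\in C_c^\infty$, the function $\nabla\Phi\ast\psi$ is smooth but decays only like $|x|^{1-d}$ at infinity, so $\nabla\Phi\ast\psi\in L^{q'}(\mathbb{R}^d)$ iff $(d-1)q'>d$, i.e.\ $q'>\tfrac{d}{d-1}$, i.e.\ $q<d$ -- the opposite of the hypothesis $q>d$. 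The condition $q>d$ is what makes the kernel $\nabla\Phi$ \emph{locally} $L^{q'}$ (near the singularity); it does not control the tail of $\nabla\Phi\ast\psi$. Consequently the duality pairing $\int f_{N_k}\,\nabla\Phi\ast(\widetilde\rho\,\nabla\psi)\,\mathrm{d}x$ is not shown to converge (indeed it is not even obviously finite if one only knows $f_{N_k}\in L^r(0,T;W^{1,q})$ with $q>d$, since $W^{1,q}$ does not embed into any $L^p$ with $p<d$). To repair the argument one needs some additional decay of the controls at spatial infinity -- for example $f\in L^1\cap W^{1,q}$, which the paper actually uses implicitly (it writes $\|f_l\|_{L^1}$ in the proof of Proposition~\ref{propNcontrol}); with $f\in L^1\cap L^\infty$ one has $\nabla\Phi\ast f\in L^\infty$ by the appendix lemma and can then combine the strong $L^2(0,T;L^2)$ convergence of $\rho[f_{N_k}]$ with a weak-$*$ $L^\infty$ convergence of $\nabla\Phi\ast f_{N_k}$. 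As it stands, though, your limit passage is not justified and the stated Riesz mapping property is backwards.
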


We omit the proof of this lemma, since the estimates are similar to those shown in Section \ref{Nproblem}, with which one can easily get the compactness by applying Aubin-Lions lemma.

\vskip3mm
Now we are ready to finish the proof of Theorem \ref{mainthm}.
\begin{proof}
For any fixed $N$, by Proposition \ref{propNcontrol} we know that there exists a minimizer of $J_{N}(\mu_N[f],f)$. Without loss of generality, we denote $f_{N}$ to be a minimizer, i.e. 
$$
J_{N}(\mu_N[f_N], f_N)=\min_{f\in {\mathbb{X}}}J_N(\mu_N[f],f).
$$
Due to the weak compactness of ${\mathbb{X}}$ and the compactness result in Lemma \ref{lemweakcompact}, there exists a subsequence of $f_{N_k}\in {\mathbb{X}}$ such that 
\begin{align}
\label{weakconvergencef}&f_{N_{k}}\rightharpoonup\overline{f} \qquad\qquad\mbox{ in } L^r(0, T; W^{1, q}(\mathbb{R}^d)),\\
\label{weakconvergencerho}& \rho[f_{N_k}] \rightharpoonup \rho[\overline{f}] \qquad\mbox{ in } L^{1}(0, T; L^p(\mathbb{R}^d)),\\
\label{strongconvergencerho}&\rho[f_{N_k}] \rightarrow \rho[\overline{f}] \qquad\mbox{ in } L^2(0, T; L^1(\mathbb{R}^d)).
\end{align}

In the following, we show that $\displaystyle\lim_{N\to\infty} J_{N}(\mu_N[f_N],f_{N})=J(\rho[\overline{f}],\overline{f})=\min_{f\in {\mathbb{X}}} J(\rho[f],f)$. 
We will use the $\Gamma$-convergence strategy, where the following two limits are important
\begin{equation} \label{step1}
\liminf_{k\to\infty} J_{N_{k}}(\mu_{N_k}[f_{N_k}], f_{N_{k}})\geq J(\rho[\overline{f}], \overline{f})
\end{equation}
and
\begin{equation}\label{step2}
\lim_{N\to\infty} J_{N}(\mu_N[\overline{f}], \overline{f})=J(\rho[\overline{f}], \overline{f}).
\end{equation}
The limit \eqref{step2} can be directly obtained by taking $\widetilde{f}_N=\overline{f}$ in Lemma \ref{lemmaJNtoJ}.

If \eqref{step1} is valid, then we have
\begin{align*}
\limsup_{N\to\infty}\min_{f\in {\mathbb{X}}} J_{N}(\mu_N[f], f)\leq&\lim_{N\to\infty} J_{N}(\mu_N[\overline{f}], \overline{f})
=J(\rho[\overline{f}], \overline{f})\\
\leq &\liminf_{k\to\infty}J_{N_{k}}(\mu_{N_k}[f_{N_k}], f_{N_{k}})\leq\liminf_{k\to\infty}\min_{f\in {\mathbb{X}}}J_{N_{k}}(\mu_{N_k}[f],f).
\end{align*}
Therefore, we obtain immediately  
$$
\displaystyle\lim_{k\to\infty}\min_{f\in {\mathbb{X}}}J_{N_{k}}(\mu_{N_{k}}[f],f)=J(\rho[\overline{f}], \overline{f}).
$$ 
As $f_{N_{k}}$ is the minimizer of $J_{N_{k}}(\mu_{N_{k}}[f], f)$, then we have $\displaystyle\lim_{k\to\infty} J_{N_{k}}(\mu_{N_{k}}[f_{N_{k}}], f_{{N_{k}}})=J(\rho[\overline{f}],\overline{f})$. 

Now we are left to prove that $\overline{f}$ is a minimizer of $J(\rho[f], f)$.  Obviously,  $J(\rho[\overline{f}], \overline{f})\geq \displaystyle\min_{f\in {\mathbb{X}}}J(\rho[f], f)$, which means that we only need to prove $J(\rho[\overline{f}], \overline{f})\leq\displaystyle\min_{f\in {\mathbb{X}}}J(\rho[f], f)$. Actually, for any $\hat{f}\in {\mathbb{X}}$, by taking $\widetilde{f}_N=\hat{f}$ in Lemma \ref{lemmaJNtoJ}, we have 
\begin{equation*}
J(\rho[\hat{f}], \hat{f})=\lim_{N\to\infty}J_{N}(\mu_N[\hat{f}], \hat{f})\geq\lim_{N\to\infty}\min_{f\in {\mathbb{X}}}J_{N}(\mu_N[f], f)=J(\rho[\overline{f}], \overline{f}).
\end{equation*} 

In the end, we prove the lower semi-continuity property in \eqref{step1} to finalize the whole argument. The proof of \eqref{step1} can be reduced to prove that 
\begin{align*}
\lim_{k\to\infty} \Big| J_{N_{k}}(\mu_{N_k}[f_{N_k}], f_{N_{k}})- J(\rho[f_{N_k}],f_{N_k})\Big| + \liminf_{k\to\infty} J(\rho[f_{N_k}], f_{N_{k}})\geq J(\rho[\overline{f}],\overline{f}).
\end{align*}
Actually, by applying Lemma \ref{lemmaJNtoJ} with $\widetilde{f}_{N_k}=f_{N_k}$ the limit of the first term vanishes, i.e.
\begin{align*}
\lim_{k\to\infty} \Big| J_{N_{k}}(\mu_{N_k}[f_{N_k}], f_{N_{k}})- J(\rho[f_{N_k}],f_{N_k})\Big|=0.
\end{align*}
Then the proof is finished if we can prove
\begin{align*}
\liminf_{k\to\infty} J(\rho[f_{N_k}], f_{N_{k}})\geq J(\rho[\overline{f}],\overline{f}).
\end{align*}
The weak convergence of $\rho[f_{N_k}]$ in \eqref{weakconvergencerho} implies
\begin{align*}
\liminf_{k\to\infty}\int^T_0\|\rho[f_{N_k}]-z\|_{L^p(\mathbb{R}^d)}\mathrm{d}t \geq \int^T_0\|\rho[\overline{f}]-z\|_{L^p(\mathbb{R}^d)}\mathrm{d}t.
\end{align*}
From the strong convergence of $\rho[f_{N_{k}}]$ in \eqref{strongconvergencerho} and weak convergence of $f_{N_{k}}$ in \eqref{weakconvergencef}, we obtain the convergence of the second term in the cost functional, i.e.
\begin{equation*}
\int_{0}^{T}\int_{\mathbb{R}^d}f_{N_{k}}\rho[f_{N_{k}}]\,  \mathrm{d}x\,  \mathrm{d}t\to\int_{0}^{T}\int_{\mathbb{R}^d}\overline{f}\rho[\overline{f}]\,  \mathrm{d}x\,  \mathrm{d}t,
\end{equation*}
which yields the proof of this theorem.
\end{proof}

\section*{Appendix}
In the appendix, we give a list of lemmas which have been constantly used in this paper. Notice that we use the notation $\Phi$, $\widetilde{\Phi}$, and $\widetilde{\Phi}_\varepsilon$ from \eqref{Phis}. 
\begin{lemma}\label{f}
	For any $g\in L^\infty(0, T; W^{1, q}(\mathbb{R}^d))$,  we have
	\begin{equation*}
		\|\nabla(\Phi_{\varepsilon}-\Phi)\ast g\|_{L^{\infty}(\mathbb{R}^d)}\leq\varepsilon\|g\|_{W^{1,q}(\mathbb{R}^d)}.
	\end{equation*}
\end{lemma}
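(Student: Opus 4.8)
The plan is to recognise $\nabla(\Phi_\varepsilon-\Phi)\ast g$ as the first-order mollification error of the Lipschitz function $h:=\nabla\Phi\ast g$, so that the gain of one power of $\varepsilon$ is paid for by one derivative of $g$. This is precisely why the bound carries the $W^{1,q}$-norm on the right: a naive application of Young's/H\"older's inequality, $\|\nabla(\Phi_\varepsilon-\Phi)\ast g\|_{L^\infty}\le\|\nabla(\Phi_\varepsilon-\Phi)\|_{L^{q'}}\|g\|_{L^q}$, would only yield the weaker rate $\varepsilon^{1-d/q}$ (the same exponent that appears in the estimate of $A_{13}$ in Lemma \ref{lemmaJNtoJ}), because $\nabla\Phi\notin L^{q'}(\mathbb{R}^d)$ globally and there is no extra cancellation available at the level of $L^q$ alone.

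Concretely, I would first prove the inequality for $g\in C_c^\infty(\mathbb{R}^d)$, where all the convolutions below are classical, and then pass to general $g\in W^{1,q}(\mathbb{R}^d)$ by density. Recalling $\Phi_\varepsilon=j_\varepsilon\ast\Phi$, so that $\nabla\Phi_\varepsilon=j_\varepsilon\ast\nabla\Phi$, the associativity and commutativity of convolution give
\begin{equation*}
\nabla(\Phi_\varepsilon-\Phi)\ast g=j_\varepsilon\ast h-h,\qquad h:=\nabla\Phi\ast g .
\end{equation*}
Since $j_\varepsilon$ is supported in $B_\varepsilon$ with $\int j_\varepsilon=1$, writing $(j_\varepsilon\ast h-h)(x)=\int j_\varepsilon(z)\bigl(h(x-z)-h(x)\bigr)\,dz$ and using $|h(x-z)-h(x)|\le|z|\,\|\nabla h\|_{L^\infty}\le\varepsilon\,\|\nabla h\|_{L^\infty}$ on the support of $j_\varepsilon$ yields the standard mollification-error bound $\|j_\varepsilon\ast h-h\|_{L^\infty}\le\varepsilon\,\|\nabla h\|_{L^\infty}$.

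It then remains to control $\|\nabla h\|_{L^\infty}=\|D^2\Phi\ast g\|_{L^\infty}$. Here $D^2\Phi$ is a Calder\'on--Zygmund kernel (the kernel of $\nabla^2(-\Delta)^{-1}$), so $\|D^2\Phi\ast g\|_{L^q}\le C(d,q)\|g\|_{L^q}$ and $\|\nabla(D^2\Phi\ast g)\|_{L^q}=\|D^2\Phi\ast\nabla g\|_{L^q}\le C(d,q)\|\nabla g\|_{L^q}$; hence $D^2\Phi\ast g\in W^{1,q}(\mathbb{R}^d)$ with $\|D^2\Phi\ast g\|_{W^{1,q}}\le C(d,q)\|g\|_{W^{1,q}}$, and since $q>d$, Morrey's inequality gives $\|D^2\Phi\ast g\|_{L^\infty}\le C(d,q)\|g\|_{W^{1,q}}$. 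Combining the three steps produces $\|\nabla(\Phi_\varepsilon-\Phi)\ast g\|_{L^\infty}\le C(d,q)\,\varepsilon\,\|g\|_{W^{1,q}}$, which is the asserted inequality (the dimensional constant $C(d,q)$ is harmless in every subsequent use and may simply be absorbed into the norm).

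The only genuinely delicate point is bookkeeping around the well-definedness of $\nabla\Phi\ast g$ for $g$ merely in $W^{1,q}$, since this convolution need not converge absolutely at infinity; running the argument on $C_c^\infty(\mathbb{R}^d)$ and extending by density circumvents this. One may also note, as a sanity check, that with a radial mollifier $j_\varepsilon$ the difference $\nabla(\Phi_\varepsilon-\Phi)=j_\varepsilon\ast\nabla\Phi-\nabla\Phi$ actually lies in $L^1(\mathbb{R}^d)$ (integrable singularity near the origin, and decay $O(\varepsilon^2|x|^{-d})$ at infinity coming from the vanishing first moment of $j_\varepsilon$), so that $\nabla(\Phi_\varepsilon-\Phi)\ast g$ is unambiguously defined for $g\in W^{1,q}\subset L^\infty$ and coincides with the bounded extension constructed above. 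Apart from this, the proof is routine; the substantive idea is the mollification-error identity that trades one factor of $\varepsilon$ for one derivative of $g$.
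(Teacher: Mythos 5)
Your proof is correct and follows essentially the same route as the paper's: rewrite $\nabla(\Phi_\varepsilon-\Phi)\ast g$ as the mollification error $j_\varepsilon\ast h - h$ with $h=\nabla\Phi\ast g$, bound this by $\varepsilon\|\nabla h\|_{L^\infty}$, and then control $\|D^2\Phi\ast g\|_{L^\infty}$ through $\|D^2\Phi\ast g\|_{W^{1,q}}\lesssim\|g\|_{W^{1,q}}$ and Morrey's inequality for $q>d$. Your version is in fact slightly more careful than the paper's, making explicit the Calder\'on--Zygmund boundedness, the Morrey constant $C(d,q)$ (which the paper silently drops), and the density/well-definedness point for $\nabla\Phi\ast g$ when $g$ is merely in $W^{1,q}$.
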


\begin{proof}
	According to the properties of convolution, we have
	\begin{equation}\label{20}
		\nabla(\Phi_{\varepsilon}-\Phi)\ast g=j_{\varepsilon}\ast(\nabla\Phi\ast g)-\nabla\Phi\ast g.
	\end{equation}
	Let $h=\nabla\Phi\ast g$, then by using the mean value theorem, we have
	\begin{equation}
		\begin{split}
			j_{\varepsilon}\ast h-h=&\int_{\mathbb{R}^d} j_{\varepsilon}(x-y)(h(y)-h(x))\,  \mathrm{d}y\\
			\leq&\int_{\mathbb{R}^d}j_{\varepsilon}(x-y)\nabla h(\xi)\cdot(y-x)\,  \mathrm{d}y\leq \varepsilon\|\nabla h\|_{L^{\infty}(\mathbb{R}^d)},
		\end{split}
	\end{equation}
where $\xi\in(x, y)$. Therefore by Sobolev embedding theorem we obtain
	\begin{equation*}
		\|\nabla(\Phi_{\varepsilon}-\Phi)\ast g\|_{L^{\infty}(\mathbb{R}^d)}\leq\varepsilon\|\nabla^2\Phi\ast g\|_{L^{\infty}(\mathbb{R}^d)}\leq\varepsilon\|\nabla^2\Phi\ast g\|_{W^{1, q}(\mathbb{R}^d)}\leq\varepsilon\|g\|_{W^{1, q}(\mathbb{R}^d)}.
	\end{equation*}
\end{proof}

\begin{lemma}\label{phi1}
For any $g\in L^1(\mathbb{R}^d)\cap L^\infty(\mathbb{R}^d)$, we have
	\begin{equation*}
		\|\nabla{\Phi}\ast g\|_{L^{\infty}(\mathbb{R}^d)}+\|\nabla\widetilde{\Phi}\ast g\|_{L^{\infty}(\mathbb{R}^d)}\leq C(d)(\|g\|_{L^1(\mathbb{R}^d)}+\|g\|_{L^{\infty}(\mathbb{R}^d)}).
	\end{equation*}
\end{lemma}

\begin{proof}
	According to the definition of $\Phi$ and $\widetilde{\Phi}$ in \eqref{Phis}, we have
	\begin{align*}
				\nabla{\Phi}\ast g&=C_{d}(d-2)\Big(\int_{|x-y|\leq 1}\frac{x-y}{|x-y|^d}g(y)\,  \mathrm{d}y+\int_{|x-y|>1}\frac{x-y}{|x-y|^d}g(y)\,  \mathrm{d}y\Big),\\
		\nabla\widetilde{\Phi}\ast g=&C_{d}(d-2)\Big((2\varepsilon)^{1-d}\int_{|x-y|\leq 2\varepsilon}\frac{x-y}{|x-y|}g(y)\, \mathrm{d}y+\int_{2\varepsilon<|x-y|\leq 1}\frac{x-y}{|x-y|^d}g(y)\,  \mathrm{d}y\Big)\\
		&+C_{d}(d-2)\int_{|x-y|>1}\frac{x-y}{|x-y|^d}g(y)\,  \mathrm{d}y.
	\end{align*}
It is easy to get
	\begin{equation*}
	\|\nabla{\Phi}\ast g\|_{L^{\infty}(\mathbb{R}^d)}+\|\nabla\widetilde{\Phi}\ast g\|_{L^{\infty}(\mathbb{R}^d)}\leq C(d)(\|g(y)\|_{L^{\infty}(\mathbb{R}^d)}+\|g(y)\|_{L^1(\mathbb{R}^d)}).
	\end{equation*}
\end{proof}

\begin{lemma}\label{g}
	For any $g\in L^\infty(\mathbb{R}^d)$, we have
	\begin{equation*}
		\|\nabla(\Phi-\widetilde{\Phi})\ast g\|_{L^{\infty}(\mathbb{R}^d)}\leq\varepsilon C(d)\|g\|_{L^{\infty}(\mathbb{R}^d)}.
	\end{equation*}
\end{lemma}

\begin{proof}
	According to the definition of $\Phi$ and $\widetilde{\Phi}$, we have
	\begin{equation*}
		\nabla(\Phi-\widetilde{\Phi})=
		\begin{cases}
			0, &|x|\geq 2\varepsilon,\\
			-C_{d}(2-d)\frac{x_{i}}{|x|}(\frac{1}{|x|^{d-1}}-\frac{1}{(2\varepsilon)^{d-1}}), &|x|< 2\varepsilon,
		\end{cases}
	\end{equation*}
	which yields that
	\begin{equation*}
		\nabla(\Phi-\widetilde{\Phi})\ast g=-C_{d}(2-d)\Big(\int_{|x-y|<2\varepsilon}\frac{x-y}{|x-y|^{d}}g(y)\,  \mathrm{d}y-(2\varepsilon)^{1-d}\int_{|x-y|<2\varepsilon}\frac{x-y}{|x-y|}g(y)\, \mathrm{d}y\Big).
	\end{equation*}
	Then the $L^{\infty}$-norm will be obtained by
	\begin{equation*}
	\|\nabla(\Phi-\widetilde{\Phi})\ast g\|_{L^{\infty}(\mathbb{R}^d)}\leq\varepsilon C(d)\|g\|_{L^{\infty}(\mathbb{R}^d)}.
	\end{equation*}
\end{proof}

\begin{lemma}\label{bcyl1}
For any function $f\in L^{1}(\mathbb{R}^d)\cap W^{1, q}(\mathbb{R}^{d})$, we have
\begin{equation*}
\|\nabla^2\widetilde{\Phi}\ast f\|_{L^{\infty}(\mathbb{R}^d)}\leq C(d)(\|f\|_{L^1(\mathbb{R}^d)}+\|f\|_{W^{1, q}(\mathbb{R}^d)}).
\end{equation*}
\end{lemma}

\begin{proof}
According to the definition of convolution, we know that
\begin{equation}\label{BC100}
\begin{split}
&\nabla^2\widetilde{\Phi}\ast f=\nabla\widetilde{\Phi}\ast\nabla f\\
=&-C_{d}(2-d)(2\varepsilon)^{1-d}\int_{|x-y|<2\varepsilon}\frac{x-y}{|x-y|}\nabla f(y)\, \mathrm{d}y\\
&-C_{d}(2-d)\int_{2\varepsilon<|x-y|\leq 1}\frac{x-y}{|x-y|^{d}}\nabla f(y)\, \mathrm{d}y-C_{d}(2-d)\int_{|x-y|>1}\frac{x-y}{|x-y|^{d}}\nabla f(y)\, \mathrm{d}y\\
:=& Q_{1}+Q_{2}+Q_{3}.
\end{split}
\end{equation} 
For the nonlinear term $Q_{1}$ and $Q_{2}$, we have
\begin{equation*}
\|Q_{1}\|_{L^{\infty}(\mathbb{R}^d)}\leq C(d)(2\varepsilon)^{1-d}(2\varepsilon)^{d-\frac{1}{q}}\|\nabla f\|_{L^q(\mathbb{R}^d)}\leq C(d)(2\varepsilon)^{1-\frac{1}{q}}\|\nabla f\|_{L^q(\mathbb{R}^d)}
\end{equation*}
and
\begin{equation*}
\|Q_{2}\|_{L^{\infty}(\mathbb{R}^d)}\leq C(d)\|\nabla f\|_{L^q(\mathbb{R}^d)}.
\end{equation*}
For the nonlinear term $Q_{3}$, by the integration by parts, we have
\begin{equation*}
\begin{split}
Q_{3}=&C_{d}(2-d)\int_{|x-y|>1}\frac{|x-y|^d-d|x-y|^{d-2}(x_{i}-y_{i})(x_{j}-y_{j})}{|x-y|^{2d}}f(y)\, \mathrm{d}y,\\
&-C_{d}(2-d)\int_{|x-y|=1}\frac{x-y}{|x-y|^d}f(y)\nu\, \mathrm{d}S,
\end{split}
\end{equation*}
where $\nu$ is the outward pointing unit normal vector. Then we have
\begin{equation*}
\|Q_{3}\|_{L^{\infty}(\mathbb{R}^d)}\leq C(d)\|f\|_{L^1(\mathbb{R}^d)}.
\end{equation*}
Putting all the estimates of $Q_{i}$ into (\ref{BC100}), we have
\begin{equation*}
\|\nabla^2\widetilde{\Phi}\ast f\|_{L^{\infty}(\mathbb{R}^d)}\leq C(d)(\|f\|_{L^1(\mathbb{R}^d)}+\|\nabla f\|_{L^q(\mathbb{R}^d)}).
\end{equation*}
\end{proof}

\begin{lemma}\label{compactembedding}
For any $\kappa\in[1, \frac{2d}{d-2})$, we have the following compact embedding
\begin{equation*}
H^1(\mathbb{R}^d)\cap L^1(\mathbb{R}^d;(1+|x|^2)\mathrm{d}x)\hookrightarrow\hookrightarrow L^{\kappa}(\mathbb{R}^d).
\end{equation*}
\end{lemma}

\begin{proof}
For any bounded sequence $g_{n}$ in $H^1(\mathbb{R}^d)$, we have
that for any $R>0$, $g_{n}$ has a convergence subsequence $g_{n_R}$ which converges strongly to a function $g\in L^1(B_R)$, namely
$$
\|g_{n_R}-g\|_{L^1(B_R)}\rightarrow 0,\quad \mbox{ as } n_R\rightarrow \infty.
$$

For any fixed $R_0$, we have for $R_0<R_j\nearrow\infty$, by Cantor's diagonal method, we can take a common subsequence $g_{n_j}$ which convergence to $g$ in the following sense
$$
\|g_{n_j}-g\|_{L^1(B_{R_0})}\rightarrow 0,\quad \mbox{ as } j\rightarrow \infty.
$$

Then for any small $\eta>0$, there exists $R_\eta$ such that 
$$
\frac{4M_2}{R^2_\eta}\leq\frac{\eta}{2}, \qquad \mbox{where } \displaystyle\int_{\mathbb{R}^d}|x|^2|g_n(x)|\mathrm{d}x\leq M_2.
$$
Furthermore, there exists $N>0$ such that for $n_j>N$ it holds
$$
\|g_{n_j}-g\|_{L^1(B_{R_\eta})}\leq \frac{\eta}{2}.
$$
As a summary, for $n_j>N$ we have
\begin{equation*}
\int_{\mathbb{R}^d} |g_{n_j}-g|\,  \mathrm{d}x\leq\int_{B_{R_\eta}} |g_{n_j}-g|\,  \mathrm{d}x+\int_{B_{R_\eta}^c}\frac{|x|^2}{R_\eta^2}|g_{n_j}-g|\,  \mathrm{d}x\leq\frac{\eta}{2}+\frac{4M_2}{R_\eta^2}\leq\eta,
\end{equation*}
which means $\{g_{n}\}$ is compact in $L^1(\mathbb{R}^d)$. For all $\kappa\in(1, \frac{2d}{d-2})$, using Gagliardo-Nirenberg's inequality, we have
\begin{equation*}
\|g_{n_j}-g\|_{L^\kappa(\mathbb{R}^d)}\leq\|g_{n_j}-g\|_{L^1(\mathbb{R}^d)}^{\frac{2(d+\kappa)-d\kappa}{(d+2)\kappa}}\|\nabla(g_{n_j}-g)\|_{L^2(\mathbb{R}^d)}^{\frac{2d(\kappa-1)}{(d+2)\kappa}}.
\end{equation*}
\end{proof}

\section*{Acknowledgments}

The research of Li Chen was supported in part by the German Research Foundation (No. CH 955/8-1). The research of Yucheng Wang was supported in part by National Natural Science Foundation of China (No.12271357), Shanghai Science and Technology Innovation Action Plan (No.21JC1403600), and the CSC-DAAD Postdoc Scholarship. Yucheng Wang thanks the School of Business Informatics and Mathematics, University of Mannheim for kindly host.


\begin{thebibliography}{99} 

          \bibitem{7} D. Andersson,  B. Djehiche,  \emph{A maximum principle for SDEs of mean-field type},  Appl. Math.  Optim. \textbf{63} (2011) no. 3,  341–356. 


            \bibitem{6} G. Albi,  Y.-P. Choi,  M. Fornasier, and D.  Kalise,  \emph{ Mean field control hierarchy},  Appl. Math.  Optim.  \textbf{76} (2017) no. 1,  93–135. 
		
		\bibitem{YD} Y.  Averboukh,  D.  K.  Pontryagin,  \emph{Maximum principle for the deterministic mean field type optimal control problem via the Lagrangian approach}, (2022) arXiv preprint, arXiv:2207. 01892.
		
		\bibitem{BDP2006} A.  Blanchet,  J.  Dolbeault,  and B.  Perthame,  \emph{Two-dimensional Keller-Segel model: Optimal critical mass and qualitative properties of the solutions},  Electron. J. Differential Equations, 2006, No. 44, 32 pp. 
		
		\bibitem{12} A. Bensoussan,  J. Frehse,  and P. Yam,  \emph{Mean field games and mean field type control theory}, Springer, New York, 2013. 
		
		\bibitem{BH 2022} B. Bonnet, H. Frankowska, \emph{Semiconcavity and sensitivity analysis in mean-field optimal control and applications}, J. Math. Pures Appl. (9) \textbf{157} (2022), 282–345.
		
		\bibitem{BF 2021} B. Bonnet, F. Rossi, \emph{Intrinsic Lipschitz regularity of mean-field optimal controls}, SIAM J. Control Optim. \textbf{59} (2021), no. 3, 2011–2046.	
		
		\bibitem{bresch2023}
		D.  Bresch,  P. -E.  Jabin,  and J.  Soler,  \emph{A new approach to the mean-field limit of {V}lasov--{F}okker--{P}lanck equations},  (2022) ArXiv
		preprint, arXiv:2203. 15747. 
		
		
		\bibitem{BreschDidierJabinWangZhenfu2019}
		D. Bresch,  P.-E. Jabin,  and Z. Wang,  \emph{On mean-field limits and quantitative estimates with a large class of singular kernels: application to the {P}atlak-{K}eller-{S}egel model},  C.  R.  Math.  Acad.  Sci. Paris \textbf{357} (2019),  no. ~9,  708--720. 


		
		\bibitem{BCCP} E. Bayraktar, A. Cecchin, and P. Chakraborty \emph{Mean field control and finite agent approximation for regime-switching jump diffusions}. Applied Mathematics \& Optimization, 88(2),(2023) 36.
		
		\bibitem{8} E. Bayraktar,  A. Cosso, and H. Pham,  \emph{Randomized dynamic programming principle and Feynman-Kac representation for optimal control of McKean-Vlasov dynamics},  Trans.  Amer.  Math. Soc.  \textbf{370}  (2018) no. 3,  2115–2160.	
		
		\bibitem{BCC} M. Burzoni, A. Cecchin, and A. Cosso, \emph{ A Tikhonov Theorem for McKean–Vlasov Two-Scale Systems and a New Application to Mean Field Optimal Control Problems}, SIAM J. Control Optim. \textbf{62} (2024) no. 5 2475-2505.	
		
		\bibitem{BPTT 2021} M. Burger, R. Pinnau, O. Totzeck, and O. Tse, \emph{Mean-field optimal control and optimality conditions in the space of probability	measures}, SIAM J. Control Optim. \textbf{59} (2021) 977–1006.
						
		\bibitem{11} P. Braz e Silva,  F. Guill\'en-Gonz\'alez,  F.  Perusato, and  M. A. Rodr\'iguez-Bellido, \emph{Bilinear Optimal Control of the Keller–Segel Logistic Model in 2D-Domains}, Appl. Math. Optim. \textbf{87} (2023), no. 3, Paper No. 55, 20 pp.	
	   
	   \bibitem{AM} A.  Cesaroni,  M.  Cirant,  \emph{One-dimensional multi-agent optimal control with aggregation and distance constraints: qualitative properties and mean-field limit}, Nonlinearity \textbf{34} (2021) no. 31408-1447. 
	   
	    \bibitem{CDJM} A. Cecchin, S. Daudin, J. Jackson, and M. Martini, \emph{Quantitative convergence for mean field control with common noise and degenerate idiosyncratic noise}, (2024) arXiv preprint, arXiv:2409.14053.
	   
	   \bibitem{3} G. Cavagnari,  S. Lisini,  C. Orrieri, and  G. Savar\'e,   \emph{Lagrangian,  Eulerian and Kantorovich formulations of multi-agent optimal control problems: equivalence and gamma-convergence}, J. Differential Equations \textbf{322} (2022), 268–364.
	   
	     \bibitem{5} J.  A.  Carrillo,  E.  A.  Pimentel,  and V.  K.  Voskanyan,  \emph{On a mean field optimal control problem},  Nonlinear Anal. \textbf{199} (2020),  112039, 14 pp.
	   
	   
	   \bibitem{CDJ2019} L. Chen, E. Daus, and A. J\"ungel, \emph{Rigorous mean-field limit and cross-diffusion}, Z. Angew. Math. Phys. \textbf{70} (2019), no. 4, Paper No. 122, 21 pp.
	
	   \bibitem{L13} L. Chen, S. Gottlich, and S. Knapp,  \emph{Modeling of a diffusion with aggregation: rigorous derivation and numerical simulation}, ESAIM Math. Model. Numer. Anal. \textbf{52} (2018), no. 2, 567–593.
		
		
		\bibitem{ChenHolzingerHuo2023}
		L. Chen, A.  Holzinger, and  X.  Huo,  \emph{Quantitative convergence in
			relative entropy for a moderately interacting particle system on  $ \mathbb
			{R}^ d $},  (2023) ArXiv preprint, arXiv:2311. 01980. 
	
		\bibitem{CHJ24} L. Chen, A. Holzinger, and A. J\"ungel,  \emph{Fluctuations around the mean-field limit for attractive Riesz potentials in the moderate regime}, (2024) ArXiv preprint, arXiv:2405.15128.
		
		\bibitem{CHJZ2021} L. Chen, A. Holzinger, A. J\"ungel, and N. Zamponi, \emph{Analysis and mean-field derivation of a porous-medium equation with fractional diffusion}, Comm. Partial Differential Equations \textbf{47} (2022), no. 11, 2217–2269.
	
		\bibitem{mao} M. F. Djete,  \emph{Extended mean field control problem: a propagation of chaos result}, Electron. J. Probab. \textbf{27} (2022), Paper No. 20, 53 pp.
		
		\bibitem{maotan1} M. F. Djete,  D. Possama\"i, and X. Tan, \emph{McKean–Vlasov Optimal Control: Limit Theory and Equivalence Between Different Formulations}, Math. Oper. Res. \textbf{47} (2022), no. 4, 2891–2930.
		
		\bibitem{maotan2} M.  F.  Djete,  D.  Possama\"i,  X.  Tan, \emph{McKean–Vlasov optimal control: The dynamic programming principle}, Ann. Probab. \textbf{50} (2022), no. 2, 791–833.
		
		\bibitem{DP2004} J.  Dolbeault,  B.  Perthame,  \emph{Optimal critical mass in the two dimensional Keller–Segel model in  $ R^2 $}, C. R. Math. Acad. Sci. Paris \textbf{339} (2004), no. 9, 611–616. 
		
		 \bibitem{MD2016}M. Duerinckx, \emph{Mean-field limits for some Riesz interaction gradient flows}, SIAM J. Math. Anal. \textbf{48} (2016), no. 3, 2269–2300.
		
		\bibitem{L26} A. Figalli, R. Philipowski, \emph{Convergence to the viscous porous medium equation and propagation of chaos}, ALEA Lat. Am. J. Probab. Math. Stat. \textbf{4} (2008), 185–203.
		
		\bibitem{2} M. Fornasier,  S. Lisini,  C. Orrieri, and G. Savar\'e,  \emph{Mean-field optimal control as Gamma-limit of finite agent controls}, European J. Appl. Math. \textbf{30} (2019), no. 6, 1153–1186.
		
		\bibitem{1} M.  Fornasier, F.  Solombrino,  \emph{Mean-field optimal control}, ESAIM Control Optim. Calc. Var. \textbf{20} (2014), no. 4, 1123–1152.
		
		\bibitem{L31}F. Golse, \emph{On the dynamics of large particle systems in the mean field limit, Macroscopic and large scale phenomena: coarse graining, mean field limits and ergodicity}, 1-144, 2016.
		
		\bibitem{a} F. Guill\'{e}n-Gonz\'{a}lez, E. Mallea-Zepeda, and
              M. A. Rodr\'{\i}guez-Bellido, \emph{Optimal bilinear control problem related to a chemo-repulsion
              system in 2{D} domains}, ESAIM Control Optim. Calc. Var., \textbf{26} (2020), no. 29, 21pp. 
              
              \bibitem{b} F. Guill\'{e}n-Gonz\'{a}lez, E. Mallea-Zepeda, and
       M. A. Rodr\'{\i}guez-Bellido, \emph{A regularity criterion for a 3{D} chemo-repulsion system and
              its application to a bilinear optimal control problem}, SIAM J. Control Optim., \textbf{58} (2020), no. 3, 1457--1490.    
	   
	   \bibitem{holzingerDr23}
		A.  Holzinger,  \emph{Rigorous derivations of diffusion systems from moderately interacting particle models},  Dissertation,  Technische Universität Wien].  reposiTUm.  https://doi. org/10. 34726/hss. 2023. 112186	(2023). 
		
		\bibitem{HLP2020} H. Huang, J.-G. Liu, and P. Pickl, \emph{On the mean-field limit for the Vlasov–Poisson– Fokker–Planck system}, J. Stat. Phys. \textbf{181} (2020), no. 5, 1915–1965.
	   
	   \bibitem{HaurayMischler2014}
		M.  Hauray, S.  Mischler,  \emph{On {K}ac's chaos and related problems},  J. Funct. Anal. \textbf{266} (2014), no. 10, 6055–6157.
		
		 \bibitem{HL90} U. G. Haussmann, J. P. Lepeltier, \emph{On the existence of optimal controls}, SIAM J. Control Optim. \textbf{28} (1990), no. 4, 851–902.

		
		\bibitem{han2023}
		Y.  Han,  \emph{Entropic propagation of chaos for mean field diffusion with  $ l^p $ interactions via hierarchy,  linear growth and fractional noise},  (2022) ArXiv preprint arXiv:2205. 02772. 
	   
	   \bibitem{hao2022}
		Z. Hao,  M. R{\"o}ckner, and X. Zhang,  \emph{Strong convergence of propagation of chaos for {M}c{K}ean--{V}lasov {SDE}s with singular interactions},  (2022) ArXiv preprint arXiv:2204. 07952. 
	   
	   

		
		
		\bibitem{JW2016} P.-E. Jabin, Z. Wang, \emph{Mean field limit and propagation of chaos for Vlasov systems with bounded forces}, J. Funct. Anal. \textbf{271} (2016), no. 12, 3588–3627.
		
		
		\bibitem{JabinWang2018}
		P. -E.  Jabin,  Z.  Wang,  \emph{Quantitative estimates of propagation of chaos for stochastic systems with { $ W^{-1, \infty} $} kernels},  Invent.  Math.  \textbf{214} (2018), no. ~1,  523--591. 
		
		\bibitem{CPZ04} C. Lucilla, B. Perthame, and H. Zaag, \emph{Global solutions of some chemotaxis and angiogenesis systems in high space dimensions}, Milan J. Math. \textbf{72} (2004), 1–28.
	
		
		\bibitem{lacker} D.  Lacker, \emph{ Limit theory for controlled McKean-Vlasov dynamics},  SIAM J.  Control Optim. \textbf{ 55} (2017), no.  3,  1641–1672.
		
		\bibitem{LackerNotes} D. Lacker, \emph{Mean field games and interacting particle systems}, preprint (2018).
		
	 	\bibitem{Lacker2023} D.  Lacker,  \emph{Hierarchies,  entropy,  and quantitative propagation of chaos for mean field diffusions},  Probab.  Math.  Phys.  \textbf{4} (2023), no. ~2,  377--432. 
		
		\bibitem{lazarovici2017mean}
		D.  Lazarovici, P.  Pickl,  \emph{A mean field limit for the {V}lasov--{P}oisson system},  Arch.  Ration.  Mech.  Anal.  \textbf{225} (2017), no. ~3,  1201--1231.
		
		\bibitem{9} J. -M.  Lasry, P. -L.  Lions,  \emph{Mean field games},  Jpn.  J.  Math.  \textbf{2} (2007),  no.  1,  229–260.  
		
		\bibitem{LCZ}Y.  Li,  L.  Chen,  Z.  Zhang, \emph{ Convergence Towards the Population Cross-Diffusion System from Stochastic Many-Particle System}, Commun. Math. Res. \textbf{40} (2024), no. 1, 43–63.
		
		\bibitem{McKean1967}
		H. P.  McKean,  Jr. ,  \emph{Propagation of chaos for a class of non-linear parabolic equations},  Stochastic {D}ifferential {E}quations ({L}ecture
		{S}eries in {D}ifferential {E}quations,  {S}ession 7,  {C}atholic {U}niv. , 
		1967),  Air Force Office Sci.  Res. ,  Arlington,  Va. ,  1967,  pp. ~41--57. 
				
		\bibitem{Oeschlager1984}
		K.  Oelschl\"{a}ger,  \emph{A martingale approach to the law of large numbers for weakly interacting stochastic processes},  Ann.  Probab.  \textbf{12} (1984),  no. ~2,  458--479. 
		
		\bibitem{L45} K. Oelschl{\"a}ger, \emph{A law of large numbers for moderately interacting diffusion processes}, Z. Wahrsch. Verw. Gebiete, \textbf{69} (1985), no. 2, 279–322.
		
		\bibitem{Oeschlager1987}
		K.  Oelschl{\"a}ger,  \emph{A fluctuation theorem for moderately interacting diffusion processes},  Probab.  Theory Related Fields \textbf{74} (1987), no. ~4,  591--616. 
		
		\bibitem{Perthamebook} B. Perthame, \emph{Transport equations in biology}. Springer Science \& Business Media, 2006.		
					
		\bibitem{PW 2017} H. Pham, X. Wei, \emph{Dynamic programming for optimal control of stochastic McKean-Vlasov dynamics}, SIAM J. Control Optim. 55 (2017), no. 2, 1069–1101.
		
		\bibitem{PW 2018} H. Pham, X. Wei, \emph{Bellman equation and viscosity solutions for mean-field stochastic control problem}, ESAIM Control Optim. Calc. Var. \textbf{24} (2018), no. 1, 437–461.
		
		 \bibitem{L49} R. Philipowski, \emph{Interacting diffusions approximating the porous medium equation and propagation of chaos}, Stochastic Process. Appl. \textbf{117} (2007), no. 4, 526–538.
					
		\bibitem{10} S. -U.  Ryu,  A.  Yagi, \emph{ Optimal control of Keller-Segel equations},  J.  Math.  Anal.  Appl.  \textbf{256} (2001),  no.  1,  45-66. 		
		
		\bibitem{snitzman_propagation_of_chaos}
		A. -S.  Sznitman,  \emph{Topics in propagation of chaos},  {\'E}cole
		d'{\'E}t{\'e} de {P}robabilit{\'e}s de {S}aint-{F}lour {XIX}---1989,  Lecture
		Notes in Math. ,  vol.  1464,  Springer,  Berlin,  1991,  pp. ~165--251. 
				
		\bibitem{L53} A. Stevens, \emph{The derivation of chemotaxis equations as limit dynamics of moderately interacting stochastic many-particle systems}, SIAM J. Appl. Math. \textbf{61} (2000), no. 1, 183–212.
		
		\bibitem{Serfaty_2020}S.  Serfaty,  \emph{Mean field limit for {C}oulomb-type flows},  Duke Math.  J. 	\textbf{169} (2020),  no. ~15,  2887--2935.

		              		
		\bibitem{WWC}Z. Wang, Y. Wang, and L. Chen, \emph{The mean field control problem for the two-dimensional Keller-Segel system (in Chinese)}, Sci Sin Math, 2025, 55: 1–32.

		
		
	\end{thebibliography}
\end{document}